\newtheorem{theorem}{Theorem}[section]
\newtheorem{lemma}[theorem]{Lemma}
\newtheorem{corollary}[theorem]{Corollary}
\newtheorem{remark}[theorem]{Remark}
\newtheorem{definition}[theorem]{Definition}
\newcommand{\bi}{\begin{itemize}}
\newcommand{\ei}{\end{itemize}}
\newcommand{\ba}{\begin{array}}
\newcommand{\ea}{\end{array}}
\begin{document}

\title{\textbf{A Finite-Difference Trust-Region Method for Convexly Constrained Smooth Optimization}}

\author{D\^an\^a Davar\thanks{ICTEAM Institute, UCLouvain, 1348 Louvain-la-Neuve, Belgium (dana.davar@uclouvain.be). This author was supported by the French Community of Belgium (FSR program).} 
\and Geovani Nunes Grapiglia\thanks{ICTEAM Institute, UCLouvain, 1348 Louvain-la-Neuve, Belgium (geovani.grapiglia@uclouvain.be). This author was partially supported by FRS-FNRS, Belgium (Grants CDR J.0081.23 and J.0094.26).} }

\date{June 30, 2026}

\maketitle

\begin{abstract}
We propose a derivative-free trust-region method based on finite-diffe\-rence gradient approximations for smooth optimization problems with convex constraints. For nonconvex problems, we establish a worst-case complexity bound of $\mathcal{O}\!\left(n\left(\frac{L}{\sigma}\epsilon\right)^{-2}\right)$ function evaluations for the method to reach an $\left(\frac{L}{\sigma}\epsilon\right)$-approximate stationary point, where $n$ is the number of variables, $L$ is the Lipschitz constant of the gradient, and $\sigma$ is a user-defined estimate of $L$. If the objective function is convex, the complexity to reduce the functional residual below $(L/\sigma)\epsilon$ is shown to be of $\mathcal{O}\!\left(n\left(\frac{L}{\sigma}\epsilon\right)^{-1}\right)$ function evaluations, while for Polyak--\L{}ojasiewicz functions on unconstrained domains, the bound further improves to $\mathcal{O}\left(n\log\left(\left(\frac{L}{\sigma}\epsilon\right)^{-1}\right)\right)$. Numerical experiments on benchmark problems with noise-free and noisy objective functions, as well as a model-fitting application, show the efficiency of the proposed method relative to state-of-the-art derivative-free solvers for unconstrained and bound-constrained problems.
\end{abstract}

\section{Introduction}\label{sec:1}

\subsection{Problem and Contributions}

We consider optimization problems of the form
\begin{equation}
\text{Minimize } f(x) \quad \text{subject to } x \in \Omega,
\label{eq:first}
\end{equation}
where $f:\mathbb{R}^n \to \mathbb{R}$ is assumed to be continuously differentiable with Lipschitz continuous gradient and bounded from below by $f_{low}$, and where $\Omega \subseteq \mathbb{R}^n$ is a nonempty closed convex set. However, we focus on the scenario in which $f\left(\,\cdot\,\right)$ is only accessible through a zeroth-order oracle, i.e., given $x$ we can only evaluate $f(x)$. Problems of this type arise in several applications, such as the calibration of PDE models \cite{zika}, bilevel optimization \cite{bilevel}, design of black-box attacks in deep neural networks \cite{adversarial}, and shape optimization \cite{elgeti}. Since gradients of the objective function are not readily available, \textit{Derivative-Free Optimization} (DFO) methods are required \cite{CSV2,AH,LMW}.

Over the past decades, significant progress has been made in the field of DFO, driven by the design of new algorithms with strong convergence and worst-case complexity guarantees, together with efficient software implementations. Among these recent developments, the TRFD method proposed in \cite{grapiglia} addresses problems of the form (\ref{eq:first}) for a composite objective function $f(x)=h(F(x))$, where $F:\mathbb{R}^{n}\to\mathbb{R}^{m}$ is a black-box function and $h:\mathbb{R}^{m}\to\mathbb{R}$ is a known convex Lipschitz continuous function, possibly nonsmooth (for example, $h(z)=\|z\|_{1}$). Specifically, TRFD is a trust-region method that constructs finite-difference approximations $A_k\in\mathbb{R}^{m\times n}$ of the Jacobian of $F(\,\cdot\,)$ at $x_k$, and computes a trial step by approximately solving the subproblem
\begin{equation}
\text{Minimize}\quad h(F(x_{k})+A_{k}d)\quad\text{subject to}\quad\|d\|_{p}\leq\Delta_{k},\,\,\text{and}\,\,x_{k}+d\in\Omega,
\label{eq:tr_subproblem}
\end{equation}
where the $p$-norm can be chosen accordingly to the function $h\left(\,\cdot\,\right)$. Thanks to combined update rules for the trust-region radius and the finite-difference stepsize, TRFD was shown to find $\epsilon$-approximate stationary points using at most $\mathcal{O}\left(n\epsilon^{-2}\right)$ evaluations of $F\left(\,\cdot\,\right)$ when the Jacobian of $F\left(\,\cdot\,\right)$ is Lipschitz continuous and the $p$-norm in (\ref{eq:tr_subproblem}) is chosen appropriately. Moreover, numerical experiments demonstrated that TRFD can outperform state-of-the-art DFO solvers for composite nonsmooth optimization problems.

Motivated by the encouraging performance of TRFD on composite nonsmooth optimization, we propose in this work a variant of it, called TRFD-S, for solving smooth optimization problems with convex constraints. The modifications leading to TRFD-S were designed to address two main drawbacks of the original TRFD algorithm. First, observe that when $F:\mathbb{R}^{n}\to\mathbb{R}$ is a real function and $h:\mathbb{R}\to\mathbb{R}$ is given by $h(z)=z$, we have $f(x)=h(F(x))=F(x)$. In this setting, subproblem (\ref{eq:tr_subproblem}) with $p=2$ reduces to the constrained minimization of an approximate first-order Taylor model of $f(x_{k}+d)$. Hence, a direct application of TRFD to smooth problems does not exploit second-order derivative information, making the method unlikely to be competitive with curvature-aware approaches such as NEWUOA \cite{powell} and BOBYQA \cite{bobyqa}. Secondly, for the smooth setting described above, we have $A_{k}=g_{k}^{T}$, where $g_{k}$ is a finite-difference approximation of $\nabla f(x_{k})$. In this case, to guide the iterates toward stationary points, TRFD relies on the exact computation of the approximate stationarity measure 
\begin{equation}
\eta_{\Delta_{\max}}(x_{k})=\frac{1}{\Delta_{\max}}\left(-\min_{s\in \Omega - \{x\} \atop \|s\| \leq \Delta_{\max}} \langle g_{k}, s\rangle\right)
\label{eq:eta_dfn}
\end{equation}
where $\Delta_{\max}$ is a user-defined upper bound on the trust-region radius. Computing the quantity in~(\ref{eq:eta_dfn}) amounts to solving a linear optimization problem over a convex set, which does not admit a closed-form solution for a general convex set $\Omega$. Therefore, although the theoretical analysis assumes that $\eta_{\Delta_{\max}}(x_k)$ can be computed exactly, its evaluation in practice typically relies on an iterative auxiliary solver, and is thus only approximate.

To address these issues, TRFD-S incorporates a quadratic term,
$\frac{1}{2}\langle H_k s,s\rangle$, into the local models, where
$H_k$ may be an approximation of the Hessian of the objective function at
$x_k$, constructed using only zeroth-order information. Moreover, TRFD-S
completely avoids the computation of the approximate stationarity measure
$\eta_{\Delta_{\max}}(x_k)$. These modifications, particularly the latter, require a new theoretical
analysis, as omitting $\eta_{\Delta_{\max}}(x_k)$ leads to worst-case
complexity guarantees for scaled, rather than absolute, accuracy
measures. In particular, we show that TRFD-S
requires at most
\[
\mathcal{O}\left(
n\left(\frac{\sigma}{L}\right)^2
\left(f(x_0)-f_{low}\right)
\epsilon^{-2}
\right)
\]
function evaluations to compute an
$\left(\frac{L}{\sigma}\epsilon\right)$-approximate stationary point
of $f\left(\,\cdot\,\right)$ over $\Omega$, where $L$ denotes the Lipschitz constant of
$\nabla f$ and $\sigma$ is a user-specified estimate of $L$. When $f\left(\,\cdot\,\right)$ is convex, we further prove that TRFD-S requires at most
\[
\mathcal{O}\left(
n\left(\frac{\sigma}{L}\right)
L\Delta_{\max}^{2}
\epsilon_f^{-1}
\right)
\]
function evaluations to compute an iterate $x_k$ satisfying $f(x_k)-f(x^*) \leq \frac{L}{\sigma}\epsilon_f$, where $x^{*}$ denotes the minimizer of $f\left(\,\cdot\,\right)$ over $\Omega$. Furthermore, if $f\left(\,\cdot\,\right)$ satisfies the Polyak--{\L}ojasiewicz inequality \cite{polyak}
with parameter $\mu>0$ and $\Omega=\mathbb{R}^n$, we establish the improved
complexity bound of
\[
\mathcal{O}\left(
n\left(\frac{L}{\mu}
\right)\log\left(
\left(\frac{\sigma}{L}\right)
\left(f(x_0)-f(x^*)\right)
\epsilon_f^{-1}
\right)
\right)
\]
function evaluations to achieve the same accuracy. Note that when $\sigma = L$, all of these complexity bounds match, up to an additional factor of $n$, the corresponding bounds for gradient descent on the same problem classes. This factor $n$ arises from the computation of finite-difference gradient approximations at each iteration.

Finally, we complement the theoretical developments with an extensive numerical study comparing TRFD-S with several derivative-free optimization solvers, including
NEWUOA \cite{powell,zhang_2023}, TRFD \cite{grapiglia}, DFQRM \cite{grapiglia2}, BOBYQA \cite{bobyqa,zhang_2023}, and NOMAD \cite{audet}. The experiments cover both unconstrained and constrained benchmark problems under exact and inexact function evaluations. We also compare TRFD-S with BOBYQA on an ODE parameter calibration problem. The results demonstrate a substantial improvement of TRFD-S over TRFD and show that TRFD-S is competitive with, and in several settings outperforms, leading derivative-free solvers such as NEWUOA and BOBYQA.

\subsection{Contents}

The paper is structured as follows. In Section \ref{sec:2}, we give the assumptions and auxiliary results. In Section \ref{sec:3} we present TRFD-S for problems with relaxable convex constraints and prove worst-case evaluation complexity bounds for nonconvex, convex and Polyak-Lojasiewicz objective functions. \textcolor{black}{We also propose an adaptation of TRFD-S to unrelaxable bound constraints in the latter section}. Finally, in Section \ref{sec:4}, we provide numerical results on benchmark problems for unconstrained and bound constraints sets, in addition to showing a model fitting application.

\subsection{Notations}

\textcolor{black}{In this paper, $\left\langle\,\cdot\,,\,\cdot\,\right\rangle$ denotes the scalar product between two vectors and $\|\,\cdot\,\|$ stands for the Euclidean norm of a vector, while $\Omega-\left\{x\right\}:=\left\{s \in \mathbb{R}^n:x+s\in \Omega\right\}$.}

\section{Assumptions and Auxiliary Results}\label{sec:2}

Through the paper, we will consider the following assumptions:
\begin{mdframed}
\textbf{A1.} $\Omega \subset \mathbb{R}^n$ is a \textcolor{black}{relaxable\footnotemark} nonempty closed convex set.
\\
\textbf{A2.} $f: \mathbb{R}^n \to \mathbb{R}$ is differentiable and its gradient $\nabla f$ is $L$-Lipschitz with respect to the Euclidean norm.
\end{mdframed}
\footnotetext{\textcolor{black}{Meaning that the objective function can be evaluated outside the domain $\Omega$ \textcolor{black}{(see, e.g., \cite{dw})}. The unrelaxable case (where the objective function can \textit{not} be evaluated outside $\Omega$) is considered in subsection \ref{sec:unrelaxable}.}}
In addition, the following definition of stationarity will be used.
\begin{definition}\label{first_def}
    A point $x^*\in\Omega$ is a stationary point of $f$ when $$\left\langle\nabla f(x^*), s\right\rangle \geq 0, \quad \forall s \in \Omega-\{x^*\}.$$
\end{definition}
\noindent Definition \ref{first_def} motivates the use of the following stationarity measure. Given $r>0$, let us denote \begin{equation}\label{psi}
    \psi_r(x)=\frac{1}{r}\left(-\min_{s\in \Omega - \{x\} \atop \|s\| \leq r} \langle \nabla f(x), s\rangle\right).
\end{equation}
\noindent The lemma below provides some properties on this stationarity measure.
\begin{lemma}\label{prop_psi}
    (\textcolor{black}{Theorems 12.1.5 and 12.1.6 in \cite{toint}, Lemmas 2.6 and 2.12 in \cite{grapiglia}}). Suppose that A1 and A2 hold, and let $\psi_r$ be defined by \eqref{psi}. Then,
    \vspace{1.5mm}
    \\
    {\color{black}(a) $\psi_{\left(\,\cdot\,\right)}\left(\,\cdot\,\right)$ is a continuous function in terms of $r$ and $x\in \Omega$, for $r>0$;} \\
    (b) $\psi_{r}(x) \geq 0, \quad \forall x \in \Omega$; \\
    (c) $\psi_{r}(x^{*})=0$ if, and only if, $x^*$ is a stationary point of $f$ in $\Omega$; \\
    \textcolor{black}{(d) $\psi_{r_2}(x) \leq \psi_{r_1}(x), \quad 0 < r_1 \leq r_2, \; \forall x\in\Omega$}.
\end{lemma}
\begin{remark}
    In the particular case where $\Omega=\mathbb{R}^n$, the stationarity measure $\psi_r(x)$ reduces to $\left\|\nabla f(x)\right\|$.
\end{remark}

\noindent In view of Lemma \ref{prop_psi}, we say that a point $x\in\Omega$ is an $\epsilon$-approximate stationary point of $f$ in $\Omega$ with respect to $r>0$, when $\psi_{r}(x)\leq\epsilon$.
\vspace{2mm}
\\
\noindent Since the gradient is supposed not to be accessible, the stationarity measure defined in \eqref{psi} will be approached by 
\begin{equation}\label{eta}
    \eta_r(x)=\frac{1}{r}\left(-\min_{s\in \Omega - \{x\} \atop \|s\| \leq r} \langle g, s\rangle\right),
\end{equation} where $g\in \mathbb{R}^n$ is an approximation to the gradient of $f$ at $x$.
\\
\\
\noindent The following lemma gives a bound for $\|\nabla f(x)-g\|$ when $g$ is a forward finite-difference approximation of $\nabla f(x)$.

\begin{lemma}\label{firstlem}
    \textcolor{black}{(Section 8.1 in \cite{NW} or (10.60) in \cite{quarteroni} for the univariate case)}. Suppose that A2 holds. Given $x \in \mathbb{R}^n$ and $\tau > 0$, let $g \in \mathbb{R}^n$ be defined by \begin{equation*}\label{fg}
        [g]_i=\frac{f(x+\tau e_i)-f(x)}{\tau}, \quad i=1, ...,n.
    \end{equation*} Then, \begin{equation*}
        {\|\nabla f(x)-g\|} \leq \frac{L}{2}\tau\sqrt{n}.
        \label{fdap}
    \end{equation*}
\end{lemma}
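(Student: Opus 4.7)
The plan is to prove the bound coordinatewise using the descent-lemma-type quadratic upper bound that follows from A2, and then aggregate over the $n$ coordinates via the Euclidean norm.

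First, I would fix an index $i\in\{1,\dots,n\}$ and apply the standard consequence of $L$-Lipschitz continuity of $\nabla f$ to the points $x$ and $x+\tau e_i$, namely
\begin{equation*}
\left| f(x+\tau e_i) - f(x) - \tau\langle \nabla f(x), e_i\rangle \right| \;\leq\; \frac{L}{2}\|\tau e_i\|^2 \;=\; \frac{L}{2}\tau^2.
\end{equation*}
This inequality is obtained by writing $f(x+\tau e_i)-f(x)=\int_0^1\langle\nabla f(x+t\tau e_i),\tau e_i\rangle\,dt$, subtracting $\tau\langle\nabla f(x),e_i\rangle$, and bounding the integrand via Cauchy--Schwarz and the Lipschitz estimate $\|\nabla f(x+t\tau e_i)-\nabla f(x)\|\leq Lt\tau$.

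Dividing by $\tau>0$ and recognizing that $[g]_i = (f(x+\tau e_i)-f(x))/\tau$ and $\langle\nabla f(x),e_i\rangle = [\nabla f(x)]_i$, this yields the componentwise bound
\begin{equation*}
\bigl|[\nabla f(x)]_i - [g]_i\bigr| \;\leq\; \frac{L}{2}\tau, \qquad i=1,\dots,n.
\end{equation*}
Squaring and summing over $i$ gives $\|\nabla f(x)-g\|^2 \leq n\,(L\tau/2)^2$, and taking square roots produces the claimed bound $\|\nabla f(x)-g\|\leq (L/2)\tau\sqrt{n}$.

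There is no real obstacle here: the only nontrivial ingredient is the quadratic upper bound, which is a textbook consequence of A2 alone (it does not require convexity or any structure of $\Omega$). The factor $\sqrt{n}$ arises solely from passing from the componentwise $\ell_\infty$-type estimate to the Euclidean norm, and the factor $1/2$ comes from the integral $\int_0^1 t\,dt = 1/2$ in the Lipschitz bound on the integrand.
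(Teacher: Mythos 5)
Your proof is correct and follows the standard componentwise argument (quadratic upper bound from the Lipschitz gradient, divide by $\tau$, aggregate over coordinates with a factor $\sqrt{n}$), which is exactly the route taken in the cited reference; the paper itself states this lemma without reproving it. No gaps.
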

\begin{remark}
    On one hand, forward finite differences give an error bound of $\mathcal{O}(\tau_k)$ by evaluating $n$ times the function. On the other hand, central finite differences would lead to an error of $\mathcal{O}(\tau_k^2)$, but by requiring $2n$ evaluations to construct the approximation $g_k$.
\end{remark}
\begin{remark}
    In view of Lemma \ref{firstlem}, the stationarity measure $\psi_r(x)$ can be related to the approximate stationarity measure $\eta_r(x)$. Such connection is established in Lemma \ref{psieta}.
\end{remark}

\section{Trust-Region Method for Convexly Constrained Problems}\label{sec:3}

In what follows, we present TRFD-S, a derivative-free \textbf{T}rust-\textbf{R}egion method based on \textbf{F}inite \textbf{D}ifferences for \textbf{S}mooth problems with relaxable convex constraints, that is, problems in which function values can be computed at points outside the feasible set (see subsection \ref{sec:unrelaxable} for the adaptation to unrelaxable bound constraints). At the $k$-th iteration of TRFD-S, an approximation $g_k$ of $\nabla f(x_k)$ is built by using finite differences. Then, a step $d_k$ is computed by solving approximately a trust-region subproblem, where the model to minimize is given by $$m_k(d) = f(x_k)+\langle g_{k}, d\rangle + \frac{1}{2}\langle H_k d, d\rangle,$$ with $H_k$ being an approximation to the Hessian of $f$ at $x_k$, \textcolor{black}{which can be computed from approximate first-order information (see also Remark \ref{rem:2})}. After, we assess the quality of the step $d_k$. If $$\frac{f(x_k)-f(x_k+d_{k})}{m_k(0)-m_k(d_k)} \geq \alpha,$$ with $\alpha \in (0,1)$, then we define $x_{k+1} = x_k + d_k$, the trust-region radius can grow, and the finite-difference stepsize remains constant by defining $\tau_{k+1}=\tau_k$. Otherwise, the method sets $x_{k+1}=x_{k}$, the trust-region radius is halved, while the finite-difference stepsize is possibly reduced. The trust-region radius keeps decreasing until the step is accepted.

In Algorithm 1, we describe precisely the steps of our new method. Notice that in Step 0, the estimate $\sigma>0$ of $L$ is a user-defined parameter. As we can see in Tables \ref{tab:1}, \ref{tab:2} and \ref{tab:3}, its value can have a significant impact. If $\sigma$ overestimates $L$, then an approximate stationary point with tighter tolerance than $\epsilon$ is guaranteed, while the worst-case evaluation complexity is increased. In the case where $\sigma$ underestimates $L$, then the worst-case evaluation complexity decreases, while an approximate stationary point with larger tolerance than $\epsilon$ can only be guaranteed.
\vspace{0.3cm}
\\
\noindent\textbf{Algorithm 1:} \textbf{TRFD-S} for relaxable convex constraints
\\[0.2cm]
\noindent\textbf{Step 0.} Given a feasible set $\Omega$, an initial point $x_0 \in \Omega$, a parameter $\epsilon > 0$, an estimate $\sigma > 0$ of $L$, and a threshold $\alpha \in (0, 1)$ for accepting trial points, define
\begin{equation*} \tau_{0} = \dfrac{\epsilon}{\sigma\sqrt{n}}.
\end{equation*} Choose an initial trust-region radius $\Delta_0$ and an upper bound on the trust-region radii $\Delta_{\mathrm{max}}$ such that $\tau_0 \sqrt{n} \leq \Delta_0 \leq \Delta_{\mathrm{max}}$, and set $k:=0$.
\\[0.2cm]
\noindent\textbf{Step 1.} Construct $g_{k} \in \mathbb{R}^{n}$ with \begin{equation*}
    \left[g_{k}\right]_i = \frac{f(x_k+\tau_{k}e_i)-f(x_k)}{\tau_{k}}, \quad i=1,...,n,
\end{equation*}
and choose a nonzero symmetric matrix $H_k\in \mathbb{R}^{n\times n}$.
\\[0.2cm]
\noindent\textbf{Step 2.} Compute an approximate solution $d_k$ of the trust-region subproblem 
\begin{equation*}
\begin{aligned}
\min_{d\in\mathbb{R}^{n}} \quad & m_k(d) = f(x_k)+\langle g_{k}, d\rangle + \frac{1}{2}\langle H_k d, d\rangle\\
\textrm{s.t.} \quad & \|d\| \leq \Delta_{k}\\
& x_k+d \in \Omega
\end{aligned}
\end{equation*}
such that
\begin{equation}\label{sdcdf}
    m_k(0)-m_k(d_k) \geq \kappa\eta_{\Delta_{\mathrm{max}}}(x_k) \min\left\{\Delta_k, \frac{\eta_{\Delta_{\mathrm{max}}}(x_k)}{\|H_k\|}\right\},
    \end{equation}
where $\kappa \in (0,1)$ is a constant independent of $k$, \textcolor{black}{and $\eta_{\Delta_{\mathrm{max}}}(x_k)$ is the approximate stationarity measure \eqref{eta} with $r=\Delta_{\mathrm{max}}$ (see Remark \ref{rem:2} for discussing the guarantee to have \eqref{sdcdf}).}
\\[0.2cm]
\noindent\textbf{Step 3.} Compute \begin{equation}\label{ratio_generalset}
    \rho_{k}=\frac{f(x_k)-f(x_k+d_{k})}{m_k(0)-m_k(d_k)}.
\end{equation} 
If $\rho_{k}\geq\alpha$, define $x_{k+1}=x_{k}+d_{k}$, $\Delta_{k+1}=\min\left\{2\Delta_{k},\Delta_{\mathrm{max}}\right\}$, $\tau_{k+1}=\tau_{k}$, set $k:=k+1$ and go to Step 1.
\\[0.2cm]
\noindent\textbf{Step 4} Define $x_{k+1}=x_k$ and $\Delta_{k+1}=\frac{1}{2}\Delta_k$. If $\tau_k\sqrt{n} \leq \Delta_{k+1}$, define $\tau_{k+1}=\tau_{k}$, $g_{k+1}=g_{k}$, $H_{k+1}=H_k$, set $k:=k+1$ and go to Step 2. Otherwise, define $\tau_{k+1} = \frac{1}{2}\tau_k$, set $k:=k+1$ and go to Step 1.
\vspace{0.2cm}
\begin{remark}\label{rem:2}
    In contrast with TRFD \cite{grapiglia} when applied to smooth optimization (i.e., the case $F:\mathbb{R}^n\to\mathbb{R}$ and $h(z)=z,\forall z \in \mathbb{R}$), TRFD-S approximates second-order information through $H_k$, which may be constructed from approximate first-order information using various strategies, such as the safeguarded BFGS update outlined in subsection~\ref{perfo_unc}. In addition, TRFD requires \textcolor{black}{computing} two trust-region subproblems per iteration, while TRFD-S only needs to solve one. Finally, TRFD assumes that the model decrease is at least proportional to a fraction of the exact model decrease, while TRFD-S assumes inequality \eqref{sdcdf}. In the case where $\Omega = \mathbb{R}^n$, condition \eqref{sdcdf} is naturally satisfied by any step at least as good as the Cauchy step. In the case where $\Omega \neq \mathbb{R}^n$ and A1 holds, condition \eqref{sdcdf} is also guaranteed by any step at least as good as the Generalized Cauchy step, which can be computed by Algorithm 12.2.2 in \cite{toint}. We provide a proof for this claim in the Appendix \ref{sec:appendix}. Notice that despite condition \eqref{sdcdf}, TRFD-S never computes $\eta_{\Delta_{\mathrm{max}}}(x_k)$.
\end{remark}
\noindent In TRFD-S, we have the following sets of iterations:
\vspace{0.1cm}
\begin{enumerate}
\item\textbf{Successful iterations} ($\mathcal{S}$): those where $\rho_{k}\geq\alpha$.
\item\textbf{Unsuccessful iterations of type I} ($\mathcal{U}^{(1)}$): those where $\rho_{k}<\alpha$ and $\tau_{k}\sqrt{n}\leq\Delta_{k+1}$.
\item\textbf{Unsuccessful iterations of type II} ($\mathcal{U}^{(2)}$): those where $\rho_{k}<\alpha$ and $\tau_{k}\sqrt{n}>\Delta_{k+1}$.
\end{enumerate}
\begin{remark}
    {\color{black}By Step 4 of TRFD-S, we see that the finite-difference stepsize is only reduced when $k \in \mathcal{U}^{(2)}$, i.e., when there is a strong evidence that the current value does not provide sufficiently accurate gradient approximation to allow the sufficient decrease of the objective function.}
\end{remark}
\noindent The lemma below shows that the finite-difference stepsize $\tau_{k}$ is always bounded from above by $\Delta_{k}/\sqrt{n}$.
\begin{lemma}
\label{lem:3.1}
Given $T \geq 1$, let $\left\{\tau_{k}\right\}_{k=0}^T$ and $\left\{\Delta_{k}\right\}_{k=0}^T$ be generated by TRFD-S.
Then
\begin{equation}
\tau_{k}\sqrt{n}\leq\Delta_{k}, \quad \text{for} \quad k=0,...,T.
\label{eq:3.1}
\end{equation}
\end{lemma}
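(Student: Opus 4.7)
The plan is a straightforward induction on $k$, using the case analysis provided by the three iteration types ($\mathcal{S}$, $\mathcal{U}^{(1)}$, $\mathcal{U}^{(2)}$).

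For the base case $k=0$, the inequality $\tau_0\sqrt{n}\leq \Delta_0$ is built into Step~0 of TRFD-S, so nothing needs to be proved. For the inductive step, assume $\tau_k\sqrt{n}\leq \Delta_k$ and split according to which of the three branches of the algorithm produced $(\tau_{k+1},\Delta_{k+1})$. If $k\in\mathcal{S}$, then Step~3 sets $\tau_{k+1}=\tau_k$ and $\Delta_{k+1}=\min\{2\Delta_k,\Delta_{\max}\}\geq \Delta_k$, so the inductive hypothesis immediately gives $\tau_{k+1}\sqrt{n}=\tau_k\sqrt{n}\leq \Delta_k\leq \Delta_{k+1}$. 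If $k\in\mathcal{U}^{(1)}$, then Step~4 sets $\tau_{k+1}=\tau_k$ and $\Delta_{k+1}=\tfrac{1}{2}\Delta_k$; in this case the defining condition $\tau_k\sqrt{n}\leq \Delta_{k+1}$ of $\mathcal{U}^{(1)}$ is exactly what we want. Finally, if $k\in\mathcal{U}^{(2)}$, then Step~4 halves both quantities, so $\tau_{k+1}\sqrt{n}=\tfrac{1}{2}\tau_k\sqrt{n}\leq \tfrac{1}{2}\Delta_k=\Delta_{k+1}$ by the inductive hypothesis.

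There is no real obstacle here; the statement is essentially a bookkeeping check that the update rules for $\tau_k$ and $\Delta_k$ in Steps~3 and~4 were chosen precisely to maintain the invariant \eqref{eq:3.1}. The only mildly non-trivial branch is $\mathcal{U}^{(1)}$, where the invariant is preserved not by the inductive hypothesis but by the explicit safeguard $\tau_k\sqrt{n}\leq \Delta_{k+1}$ that defines this case in Step~4. Once this observation is made, the induction closes in a single line per case.
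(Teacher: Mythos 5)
Your proof is correct and follows essentially the same route as the paper: induction on $k$ with the base case from Step~0 and the same three-way case split ($\mathcal{S}$ via monotonicity of $\Delta_{k+1}$, $\mathcal{U}^{(1)}$ via the explicit safeguard $\tau_k\sqrt{n}\leq\Delta_{k+1}$, and $\mathcal{U}^{(2)}$ via halving both quantities). No issues.
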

\begin{proof}
Let us work through an induction argument. By Step 0 of TRFD-S, we have that \eqref{eq:3.1} holds for $k=0$. By assuming \eqref{eq:3.1} to be true for some $k \in \{0,...,T-1\}$, let us show that \eqref{eq:3.1} also holds for $k+1$. With our sets of iterations, we have three possible cases:
\\[0.2cm]
\noindent\textbf{Case I}: $k\in\mathcal{S}$.
\\[0.2cm]
By Step 3 of TRFD-S, we have $\tau_{k+1}=\tau_{k}$ and $\Delta_{k+1}\geq\Delta_{k}$. Thus, by the induction assumption,
\begin{equation*}
\tau_{k+1}\sqrt{n}=\tau_{k}\sqrt{n}\leq\Delta_{k}\leq\Delta_{k+1},
\end{equation*}
which means that \eqref{eq:3.1} is true for $k+1$.
\\[0.2cm]
\noindent\textbf{Case II}: $k\in\mathcal{U}^{(1)}$.
\\[0.2cm]
By Step 4 of TRFD-S, we have $\tau_{k+1}=\tau_{k}$ and $\tau_k\sqrt{n} \leq \Delta_{k+1}$. Then,
\begin{equation*}
    \tau_{k+1}\sqrt{n} = \tau_k\sqrt{n}\leq \Delta_{k+1},
\end{equation*}
so \eqref{eq:3.1} is true for $k+1$.
\\[0.2cm]
\noindent\textbf{Case III}: $k\in\mathcal{U}^{(2)}$.
\\[0.2cm]
By Step 4 of TRFD-S, we have $\tau_{k+1} = \frac{1}{2}\tau_k$ and $\Delta_{k+1} = \frac{1}{2}\Delta_k$. Thus, by using the induction assumption, we have
\begin{equation*}
    \tau_{k+1}\sqrt{n} = \frac{1}{2}\tau_k\sqrt{n} \leq \frac{1}{2}\Delta_k = \Delta_{k+1},
\end{equation*}
that is, \eqref{eq:3.1} is true for $k+1$, which concludes the proof.
\end{proof}
\noindent In view of Lemmas \ref{firstlem} and \ref{lem:3.1}, the finite-difference approximation $g_{k}$ in TRFD-S satisfies \textcolor{black}{(see Assumption 4 in Conejo et al. \cite{karas})}
\begin{equation}\label{fg_delta}
    \|\nabla f(x_{k})-g_{k}\|\leq\dfrac{L}{2}\Delta_{k},\quad\forall k \geq 0.
\end{equation}
\noindent Using the previous inequality, the next lemma proves that if the trust-region radius is sufficiently small, then $k\in \mathcal{S}$.
\begin{lemma}
\label{lem:3.2}
Suppose that A1 and A2 hold, and let $x_k$ be generated by TRFD-S. If \begin{equation}
\Delta_{k}\leq \dfrac{(1-\alpha)\kappa\eta_{\Delta_\mathrm{max}}(x_{k})}{2L+\|H_k\|},
\label{eq:3.2}
\end{equation}
then $k\in \mathcal{S}$, where $\kappa$ is the constant in \eqref{sdcdf}.
\end{lemma}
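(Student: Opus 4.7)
The plan is to bound the ratio $|1-\rho_k|$ above by $1-\alpha$. Since
\[
1-\rho_k = \frac{(m_k(0)-m_k(d_k))-(f(x_k)-f(x_k+d_k))}{m_k(0)-m_k(d_k)} = \frac{f(x_k+d_k)-m_k(d_k)}{m_k(0)-m_k(d_k)},
\]
it suffices to show that the absolute value of the numerator is at most $(1-\alpha)$ times the denominator.

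First I would control the numerator. Adding and subtracting $\langle\nabla f(x_k),d_k\rangle$ in $f(x_k+d_k)-m_k(d_k)$ and applying the standard descent lemma consequence $|f(x_k+d_k)-f(x_k)-\langle\nabla f(x_k),d_k\rangle|\leq \tfrac{L}{2}\|d_k\|^2$ (from A2), together with Cauchy-Schwarz and the operator-norm bound for $H_k$, gives
\[
|f(x_k+d_k)-m_k(d_k)| \leq \tfrac{L}{2}\|d_k\|^2 + \|\nabla f(x_k)-g_k\|\,\|d_k\| + \tfrac{1}{2}\|H_k\|\,\|d_k\|^2.
\]
Now invoking \eqref{fg_delta} and $\|d_k\|\leq\Delta_k$, every term becomes a multiple of $\Delta_k^2$, producing
\[
|f(x_k+d_k)-m_k(d_k)| \leq \tfrac{2L+\|H_k\|}{2}\,\Delta_k^2.
\]

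Next I would lower-bound the denominator using the Cauchy-type condition \eqref{sdcdf}. The mildly tricky step is verifying that under hypothesis \eqref{eq:3.2} the minimum in \eqref{sdcdf} is realized by $\Delta_k$ rather than by $\eta_{\Delta_{\max}}(x_k)/\|H_k\|$. This reduces to checking that
\[
\frac{(1-\alpha)\kappa}{2L+\|H_k\|} \leq \frac{1}{\|H_k\|},
\]
which is immediate because $(1-\alpha)\kappa<1$, so $(1-\alpha)\kappa\|H_k\|\leq \|H_k\|<2L+\|H_k\|$ (we may of course assume $L>0$, otherwise $f$ is affine and the result is trivial). Hence from \eqref{eq:3.2} we also have $\Delta_k\leq \eta_{\Delta_{\max}}(x_k)/\|H_k\|$, and \eqref{sdcdf} yields
\[
m_k(0)-m_k(d_k) \geq \kappa\,\eta_{\Delta_{\max}}(x_k)\,\Delta_k.
\]

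Dividing the two bounds and inserting \eqref{eq:3.2} one more time gives
\[
\left|1-\rho_k\right| \leq \frac{(2L+\|H_k\|)\Delta_k}{2\kappa\,\eta_{\Delta_{\max}}(x_k)} \leq \frac{1-\alpha}{2}<1-\alpha,
\]
hence $\rho_k\geq\alpha$ and $k\in\mathcal{S}$, concluding the proof. The only real obstacle is recognizing that the hypothesis \eqref{eq:3.2} is strong enough to force the Cauchy-step minimum to land on the $\Delta_k$ branch; once that is observed, the rest is bookkeeping.
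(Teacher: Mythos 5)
Your proof is correct and follows essentially the same route as the paper's: the same decomposition of $1-\rho_k$, the same three-term bound on the numerator via the descent lemma, \eqref{fg_delta} and $\|H_k\|$, and the same key observation that \eqref{eq:3.2} forces the minimum in \eqref{sdcdf} onto the $\Delta_k$ branch. The only difference is cosmetic: you retain the factor $\tfrac12$ that the paper discards, so you end with the slightly sharper bound $\tfrac{1-\alpha}{2}$ instead of $1-\alpha$.
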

\begin{proof}
By \eqref{ratio_generalset}, A2, \eqref{sdcdf} and \eqref{fg_delta}, we have
\begin{align*}
    1-\rho_k &= \frac{m_k(0)-m_k(d_k)-(f(x_k)-f(x_k+d_k))}{m_k(0)-m_k(d_k)} \\
    &\leq \frac{f(x_k+d_k)-f(x_k)- \langle \nabla f(x_k), d_k \rangle + \langle \nabla f(x_k), d_k \rangle - \langle g_k, d_k \rangle - \frac{1}{2}\langle H_kd_k,d_k \rangle}{m_k(0)-m_k(d_k)} \\
    &\leq \frac{\left|f(x_k+d_k)-f(x_k)-\langle \nabla f(x_k), d_k \rangle \right| + \left| \langle \nabla f(x_k), d_k \rangle - \langle g_k, d_k \rangle \right| + \left| \frac{1}{2}\langle H_kd_k,d_k \rangle \right|}{\kappa\eta_{\Delta_\mathrm{max}}(x_k)\min\left\{\Delta_k, \frac{\eta_{\Delta_\mathrm{max}}(x_k)}{\|H_k\|}\right\}} \\
    &\leq \frac{\frac{L}{2}\|d_k\|^2 + \| \nabla f(x_k) - g_k\|\|d_k\| + \frac{1}{2}\|H_k\|\|d_k\|^2}{\kappa\eta_{\Delta_\mathrm{max}}(x_k)\min\left\{\Delta_k, \frac{\eta_{\Delta_\mathrm{max}}(x_k)}{\|H_k\|}\right\}} \\
    &\leq \frac{\frac{L}{2}\Delta_k^2 + \frac{L}{2}\Delta_k^2 + \frac{\|H_k\|}{2}\Delta_k^2}{\kappa\eta_{\Delta_\mathrm{max}}(x_k)\min\left\{\Delta_k, \frac{\eta_{\Delta_\mathrm{max}}(x_k)}{\|H_k\|}\right\}} \\
    &< \frac{(2L + \|H_k\|)\Delta_k^2}{\kappa\eta_{\Delta_\mathrm{max}}(x_k)\min\left\{\Delta_k, \frac{\eta_{\Delta_\mathrm{max}}(x_k)}{\|H_k\|}\right\}}. 
\end{align*}
Since $\alpha \in (0,1)$, $\kappa \in (0,1)$ and $2L > 0$, by \eqref{eq:3.2} we get $$\Delta_k < \frac{\eta_{\Delta_\mathrm{max}}(x_k)}{\|H_k\|}.$$ So, it follows that $$1-\rho_k \leq \frac{(2L + \|H_k\|)\Delta_k}{\kappa\eta_{\Delta_\mathrm{max}}(x_k)}.$$
Finally, by \eqref{eq:3.2} we have $$1-\rho_k \leq 1-\alpha.$$ Thus, we get $\rho_k \geq \alpha$, meaning that $k\in\mathcal{S}$, which concludes the proof.
\end{proof}
\noindent The next lemma bounds the error $\left|\psi_{\Delta_{\mathrm{max}}}(x_k)-\eta_{\Delta_{\mathrm{max}}}(x_k)\right|$ with the same quantity as for $\|\nabla f(x_k)-g_k\|$.

\begin{lemma}\label{psieta}
    (Lemma 2.10 in \cite{grapiglia}). Suppose that A1 and A2 hold, and let $x_k$ be generated by TRFD-S. Then, \begin{equation}\label{eq13}
        \left|\psi_{\Delta_{\mathrm{max}}}(x_k) - \eta_{\Delta_{\mathrm{max}}}(x_k)\right| \leq \frac{L}{2}\tau_k\sqrt{n}.
    \end{equation}
\end{lemma}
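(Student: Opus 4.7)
The plan is to exploit the fact that both $\psi_{\Delta_{\max}}(x_k)$ and $\eta_{\Delta_{\max}}(x_k)$ are defined as (scaled) negatives of the minimum of a linear functional over the same compact convex feasible set
\[
C_k = \{s \in \Omega - \{x_k\} : \|s\| \leq \Delta_{\max}\},
\]
and to use the classical fact that such a min-value is $1$-Lipschitz in the linear coefficient with respect to the dual norm (here, the Euclidean norm).

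Concretely, I would let $s^{*} \in C_k$ achieve the minimum in the definition of $\psi_{\Delta_{\max}}(x_k)$ and $\hat{s} \in C_k$ achieve the minimum in the definition of $\eta_{\Delta_{\max}}(x_k)$. Both minimizers exist because $C_k$ is nonempty (it contains $0$), closed and bounded, and the objectives are continuous. Then I would write
\[
\Delta_{\max}\bigl(\psi_{\Delta_{\max}}(x_k) - \eta_{\Delta_{\max}}(x_k)\bigr) = \langle g_k, \hat{s}\rangle - \langle \nabla f(x_k), s^{*}\rangle,
\]
and use the optimality of $\hat{s}$ to upper bound the right-hand side by $\langle g_k, s^{*}\rangle - \langle \nabla f(x_k), s^{*}\rangle = \langle g_k - \nabla f(x_k), s^{*}\rangle$, then apply Cauchy--Schwarz together with $\|s^{*}\| \leq \Delta_{\max}$ to conclude
\[
\psi_{\Delta_{\max}}(x_k) - \eta_{\Delta_{\max}}(x_k) \leq \|\nabla f(x_k) - g_k\|.
\]
Swapping the roles of $s^{*}$ and $\hat{s}$ (using optimality of $s^{*}$ for the $\nabla f(x_k)$-problem) yields the reverse bound, so
\[
\bigl|\psi_{\Delta_{\max}}(x_k) - \eta_{\Delta_{\max}}(x_k)\bigr| \leq \|\nabla f(x_k) - g_k\|.
\]

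To finish, I would apply Lemma \ref{firstlem} to the forward finite-difference approximation $g_k$ constructed in Step 1 of TRFD-S with stepsize $\tau_k$, giving $\|\nabla f(x_k) - g_k\| \leq \tfrac{L}{2}\tau_k\sqrt{n}$, which is exactly the desired inequality \eqref{eq13}.

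There is no real obstacle: the only subtle point is that the two minimizations take place over the \emph{same} set $C_k$, which is crucial so that $s^{*}$ and $\hat{s}$ are admissible test points for each other's problem. The factor $1/\Delta_{\max}$ in the definitions of $\psi$ and $\eta$ cancels precisely against the bound $\|s^{*}\|,\|\hat{s}\| \leq \Delta_{\max}$, which is why the final bound is $\|\nabla f(x_k)-g_k\|$ without any $\Delta_{\max}$-dependence.
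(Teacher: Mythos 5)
Your proof is correct, and it is the standard argument behind this result: the paper itself does not prove the lemma but imports it verbatim as Lemma 2.10 of the cited reference, where the same reasoning (cross-testing the two minimizers over the common compact set $C_k$, Cauchy--Schwarz, cancellation of the $\Delta_{\mathrm{max}}$ factor, then the finite-difference error bound of Lemma \ref{firstlem}) is used. Your handling of the subtle points --- existence of the minimizers because $0\in C_k$ and $C_k$ is compact, and the fact that both optimizations range over the same set --- is exactly what makes the argument go through.
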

\noindent By using the tolerance $\left(\frac{L}{\sigma}\epsilon\right)$, the following lemma provides a lower bound on the approximate stationarity measure $\eta_{\Delta_{\mathrm{max}}}(x_k)$.
\begin{lemma}\label{eta_k_0}
    Suppose that A1 and A2 hold, and let $x_k$ be generated by TRFD-S. If
    \begin{equation}\label{psi_L_sigma}
        \psi_{\Delta_{\mathrm{max}}}(x_k) > \left(\frac{L}{\sigma}\epsilon\right),
    \end{equation}
    then \begin{equation}\label{eta_eps}
        \eta_{\Delta_{\mathrm{max}}}(x_k) > \frac{1}{2}\left(\frac{L}{\sigma}\epsilon\right).
    \end{equation}
\end{lemma}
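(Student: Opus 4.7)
The plan is to combine Lemma \ref{psieta} with the fact that the finite-difference stepsize $\tau_k$ is nonincreasing along the iterations of TRFD-S.

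First, I would observe that, by inspection of Steps 3 and 4 of the algorithm, at every iteration either $\tau_{k+1}=\tau_k$ or $\tau_{k+1}=\tfrac{1}{2}\tau_k$. Hence the sequence $\{\tau_k\}$ is monotonically nonincreasing, so $\tau_k\leq\tau_0$ for all $k\geq 0$. Since Step 0 sets $\tau_0=\epsilon/(\sigma\sqrt{n})$, this yields
\begin{equation*}
\frac{L}{2}\tau_k\sqrt{n}\;\leq\;\frac{L}{2}\tau_0\sqrt{n}\;=\;\frac{1}{2}\!\left(\frac{L}{\sigma}\epsilon\right).
\end{equation*}

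Next, I would invoke Lemma \ref{psieta} to get $|\psi_{\Delta_{\mathrm{max}}}(x_k)-\eta_{\Delta_{\mathrm{max}}}(x_k)|\leq\tfrac{L}{2}\tau_k\sqrt{n}$, and combine it with the above bound and the reverse triangle inequality to conclude
\begin{equation*}
\eta_{\Delta_{\mathrm{max}}}(x_k)\;\geq\;\psi_{\Delta_{\mathrm{max}}}(x_k)-\frac{L}{2}\tau_k\sqrt{n}\;>\;\left(\frac{L}{\sigma}\epsilon\right)-\frac{1}{2}\!\left(\frac{L}{\sigma}\epsilon\right)\;=\;\frac{1}{2}\!\left(\frac{L}{\sigma}\epsilon\right),
\end{equation*}
which is exactly \eqref{eta_eps}.

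There is no real obstacle here: the proof reduces to the monotonicity of $\{\tau_k\}$ (which is built into the algorithm) together with a direct application of the already-established bound in Lemma \ref{psieta}. The only point worth being careful about is that the initialization $\tau_0=\epsilon/(\sigma\sqrt{n})$ is precisely calibrated so that $\tfrac{L}{2}\tau_0\sqrt{n}$ equals half of the target tolerance $(L/\sigma)\epsilon$; this is what makes the strict inequality in \eqref{psi_L_sigma} pass cleanly to the strict inequality in \eqref{eta_eps}.
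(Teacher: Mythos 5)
Your proof is correct and follows essentially the same route as the paper: monotonicity of $\{\tau_k\}$, the calibration $\tau_0=\epsilon/(\sigma\sqrt{n})$, and Lemma \ref{psieta}. The only cosmetic difference is that the paper arranges the inequalities to first extract the intermediate bound $\eta_{\Delta_{\mathrm{max}}}(x_k)>\tfrac{1}{2}\psi_{\Delta_{\mathrm{max}}}(x_k)$ (which it reuses later in Lemmas \ref{lem:3.2.2} and \ref{f_mu_g}), whereas you go directly to the stated conclusion.
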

\begin{proof}
By the update rules of TRFD-S, we have
\begin{equation}\label{tau_k_tau_0}
    \tau_k \leq \tau_0, \quad \forall k \geq 0.
\end{equation}
Then, by Lemma \ref{psieta}, \eqref{tau_k_tau_0}, the definition of $\tau_0$ in Step 0 of TRFD-S and \eqref{psi_L_sigma}, it follows that 
\begin{align*}
    \psi_{\Delta_{\mathrm{max}}}(x_k)
    &\leq |\psi_{\Delta_{\mathrm{max}}}(x_k) - \eta_{\Delta_{\mathrm{max}}}(x_k)| + \eta_{\Delta_{\mathrm{max}}}(x_k) \leq \frac{L}{2}\tau_k\sqrt{n} + \eta_{\Delta_{\mathrm{max}}}(x_k) \\ &\leq \frac{L}{2}\tau_0\sqrt{n} + \eta_{\Delta_{\mathrm{max}}}(x_k) = \frac{L}{2}\frac{\epsilon}{\sigma} + \eta_{\Delta_{\mathrm{max}}}(x_k) \\
    &< \frac{1}{2}\psi_{\Delta_{\mathrm{max}}}(x_k) + \eta_{\Delta_{\mathrm{max}}}(x_k).
\end{align*}
Thus, we get
\begin{equation}\label{eta_psi}
    \eta_{\Delta_{\mathrm{max}}}(x_k) > \frac{1}{2}\psi_{\Delta_{\mathrm{max}}}(x_k).
\end{equation}
Therefore, by \eqref{psi_L_sigma}, it follows that \eqref{eta_eps} is true.
\end{proof}
\noindent Now let us consider the following assumption on the matrix $H_k$:
\vspace{0.2cm}
\begin{mdframed}
\textbf{A3.} There exists a nonzero positive constant $M$, \textcolor{black}{independent of $k$}, such that $\|H_k\| \leq M$, for all $k \geq 0$.
\end{mdframed}
\vspace{0.2cm}
\noindent The next lemma gives a lower bound on the trust-region radius $\Delta_k$.
\begin{lemma}
\label{lem:3.3}
Suppose that A1-A3 hold. Given $T\geq 1$, let $\left\{\Delta_{k}\right\}_{k=0}^{T}$ be generated by TRFD-S. If
\begin{equation*}
    \psi_{\Delta_{\mathrm{max}}}(x_k) > \left(\frac{L}{\sigma}\epsilon\right), \quad \text{for} \quad k=0,...,T-1,
\end{equation*}
then
\begin{equation}
\Delta_{k}\geq\dfrac{(1-\alpha)\kappa}{8L+4M}\left(\frac{L}{\sigma}\epsilon\right)\equiv\Delta_{\min}(\epsilon),\quad\text{for}\quad k=0,\ldots,T,
\label{delta}
\end{equation}
where $\kappa$ is the constant in \eqref{sdcdf}.
\end{lemma}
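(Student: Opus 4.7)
The plan is a straightforward induction on $k$, combining the contrapositive of Lemma \ref{lem:3.2} with the lower bound on $\eta_{\Delta_{\max}}(x_k)$ from Lemma \ref{eta_k_0}.

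For the base case $k=0$, I would use the initialization from Step 0: $\Delta_0 \geq \tau_0\sqrt{n} = \epsilon/\sigma$. Writing $\Delta_{\min}(\epsilon)$ as $\frac{(1-\alpha)\kappa L}{8L+4M}\cdot(\epsilon/\sigma)$ and noting that the leading constant satisfies $\frac{(1-\alpha)\kappa L}{8L+4M} < \frac{L}{8L} = 1/8 < 1$ (because $\alpha,\kappa\in(0,1)$ and $M>0$), the inequality $\Delta_0 \geq \Delta_{\min}(\epsilon)$ follows immediately.

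For the inductive step, suppose $\Delta_k \geq \Delta_{\min}(\epsilon)$ for some $k \in \{0,\ldots,T-1\}$, and split according to whether $k$ is successful. If $k \in \mathcal{S}$, then Step 3 gives $\Delta_{k+1} = \min\{2\Delta_k,\Delta_{\max}\} \geq \Delta_k \geq \Delta_{\min}(\epsilon)$, and we are done. If instead $k \in \mathcal{U}^{(1)} \cup \mathcal{U}^{(2)}$, then the contrapositive of Lemma \ref{lem:3.2} yields
\begin{equation*}
\Delta_k > \frac{(1-\alpha)\kappa\,\eta_{\Delta_{\max}}(x_k)}{2L+\|H_k\|} \geq \frac{(1-\alpha)\kappa\,\eta_{\Delta_{\max}}(x_k)}{2L+M},
\end{equation*}
where the second inequality uses A3. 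Since the hypothesis $\psi_{\Delta_{\max}}(x_k) > (L/\sigma)\epsilon$ is assumed for every $k \leq T-1$, Lemma \ref{eta_k_0} gives $\eta_{\Delta_{\max}}(x_k) > \tfrac{1}{2}(L/\sigma)\epsilon$. Combining the two bounds,
\begin{equation*}
\Delta_k > \frac{(1-\alpha)\kappa}{4L+2M}\left(\frac{L}{\sigma}\epsilon\right) = 2\,\Delta_{\min}(\epsilon).
\end{equation*}
Since in Step 4 we always have $\Delta_{k+1} = \tfrac{1}{2}\Delta_k$ regardless of whether $k \in \mathcal{U}^{(1)}$ or $k \in \mathcal{U}^{(2)}$, this gives $\Delta_{k+1} > \Delta_{\min}(\epsilon)$, completing the induction.

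There is no real obstacle here; the lemma is a standard trust-region lower bound, and all the technical work (the ratio-test estimate and the transfer from $\psi$ to $\eta$) has already been done in Lemmas \ref{lem:3.2} and \ref{eta_k_0}. The only points requiring care are that A3 is used to bound $\|H_k\|$ by $M$ uniformly, and that the hypothesis on $\psi_{\Delta_{\max}}(x_k)$ is available precisely for the indices $k \leq T-1$ at which Lemma \ref{eta_k_0} needs to be invoked, so the induction covers $k = 0, \ldots, T$ as stated.
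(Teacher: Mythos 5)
Your proof is correct and follows essentially the same route as the paper's: induction on $k$, with the base case from $\Delta_0 \geq \tau_0\sqrt{n} = \epsilon/\sigma$, the successful case handled by monotonicity of the update, and the unsuccessful case handled by combining the contrapositive of Lemma \ref{lem:3.2} (with $\|H_k\|\leq M$ from A3) and the bound $\eta_{\Delta_{\mathrm{max}}}(x_k) > \tfrac{1}{2}(L/\sigma)\epsilon$ from Lemma \ref{eta_k_0} before halving. No gaps.
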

\begin{proof}
For $k=0$, by Step 0 of TRFD-S, since $\alpha \in (0,1)$, $\kappa \in (0,1)$ and $\frac{L}{8L+4M} \in (0,1)$, we have
\begin{equation*}
    \Delta_0 \geq \tau_0\sqrt{n} = \frac{\epsilon}{\sigma} > \frac{ (1-\alpha)\kappa}{8L+4M}\left(\frac{L}{\sigma}\epsilon\right) = \Delta_{\min}(\epsilon).
\end{equation*}
So, \eqref{delta} is true for $k=0$. Now, let us assume that \eqref{delta} is true for some $k\in\left\{0,\ldots,T-1\right\}$. On one hand, if $\rho_k \geq \alpha$, then Step 3 of TRFD-S and the induction assumption imply that
\begin{equation*}
\Delta_{k+1} \geq \Delta_{k}\geq{\Delta}_{\min}(\epsilon).
\end{equation*}
On the other hand, if $\rho_k < \alpha$, then by Lemmas \ref{lem:3.2}, \ref{eta_k_0} and A3, we must have
\begin{equation}\label{delta_larger}
    \Delta_k > \frac{(1-\alpha)\kappa}{2L+M}\,\frac{1}{2}\left(\frac{L}{\sigma}\epsilon\right),
\end{equation}
since otherwise we would have $\rho_k \geq \alpha$, contradicting the assumption that $\rho_k < \alpha$. Then, by the update rule in Step 4 of TRFD-S and \eqref{delta_larger}, it follows that
\begin{equation*}
\Delta_{k+1}=\dfrac{1}{2}\Delta_{k}>\dfrac{(1-\alpha)\kappa}{8L+4M}\left(\frac{L}{\sigma}\epsilon\right) = \Delta_{\min}(\epsilon),
\end{equation*}
which shows that (\ref{delta}) is true.
\end{proof}

\subsection{Worst-Case Complexity Bound for Nonconvex Problems}

Given $j\in\left\{0,1,\ldots\right\}$, let
\begin{eqnarray*}
\mathcal{S}_{j}&=&\left\{0,\ldots,j\right\}\cap\mathcal{S},\\
\mathcal{U}_{j}^{(1)}&=&\left\{0,\ldots,j\right\}\cap\mathcal{U}^{(1)},\\
\mathcal{U}_{j}^{(2)}&=&\left\{0,\ldots,j\right\}\cap\mathcal{U}^{(2)}.
\end{eqnarray*}
Also, let 
\begin{equation}
    T_{g}(\epsilon)=\inf\left\{k\in\mathbb{N}\,:\,\psi_{\Delta_{\mathrm{max}}}(x_{k})\leq \left(\frac{L}{\sigma}\epsilon\right)\right\}
    \label{eq:hitting}
\end{equation}
be the first iteration index reaching an $\left(\frac{L}{\sigma}\epsilon\right)$-approximate stationary point of $f$ in $\Omega$, if it exists. Our goal is to obtain a finite upper bound for $T_{g}(\epsilon)$. If $T_{g}(\epsilon)\geq 1$, it follows from the notation above that
\begin{equation}
T_{g}(\epsilon) = \left|\mathcal{S}_{T_{g}(\epsilon)-1}\right| + \left|\mathcal{U}_{T_{g}(\epsilon)-1}^{(1)} \cup \mathcal{U}_{T_{g}(\epsilon)-1}^{(2)}\right|.
\label{eq:motivation}
\end{equation}
In the next two lemmas, we will provide upper bounds for the two terms in \eqref{eq:motivation}. To that end, let us consider the following additional assumption:
\vspace{0.2cm}
\begin{mdframed}
\textbf{A4.} There exists $f_{low}\in\mathbb{R}$ such that $f(x)\geq f_{low}$, for all $x\in\Omega$.
\end{mdframed}
\vspace{0.2cm}
The next lemma provides an upper bound on $\left|\mathcal{S}_{T_{g}(\epsilon)-1}\right|$.
\begin{lemma}
\label{lem:3.4bis}
Suppose that A1-A4 hold, and assume that $T_g(\epsilon)\geq 1$. Then
\begin{equation*}
\left|\mathcal{S}_{T_g(\epsilon)-1}\right|\leq \frac{(16L+8M)(f(x_0)-f_{low})}{\alpha(1-\alpha)\kappa^2}\left(\frac{L}{\sigma}\epsilon\right)^{-2},
\end{equation*}
where $\kappa$ is the constant in \eqref{sdcdf}.
\end{lemma}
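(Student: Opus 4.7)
The plan is to bound the sum of per-iteration decreases on successful iterations, using A4 as a uniform lower bound on $f$, and then invert that bound. On a successful iteration $k$, the acceptance test gives $f(x_k)-f(x_{k+1})\geq\alpha(m_k(0)-m_k(d_k))$, and the Cauchy-type condition \eqref{sdcdf} on the model decrease yields $f(x_k)-f(x_{k+1})\geq\alpha\kappa\,\eta_{\Delta_{\max}}(x_k)\min\{\Delta_k,\eta_{\Delta_{\max}}(x_k)/\|H_k\|\}$. So the first step is to argue each of the three quantities in this product is bounded below by a multiple of $(L/\sigma)\epsilon$ whenever $k<T_g(\epsilon)$.

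For the factor $\eta_{\Delta_{\max}}(x_k)$, the hypothesis $k<T_g(\epsilon)$ means $\psi_{\Delta_{\max}}(x_k)>(L/\sigma)\epsilon$, so Lemma~\ref{eta_k_0} gives $\eta_{\Delta_{\max}}(x_k)>\tfrac12(L/\sigma)\epsilon$. For the trust-region radius, Lemma~\ref{lem:3.3} gives $\Delta_k\geq\Delta_{\min}(\epsilon)=\tfrac{(1-\alpha)\kappa}{8L+4M}(L/\sigma)\epsilon$. For the ratio $\eta_{\Delta_{\max}}(x_k)/\|H_k\|$, assumption A3 gives $\|H_k\|\leq M$, hence this ratio is at least $(L/\sigma)\epsilon/(2M)$. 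Comparing the two candidate minima, since $(1-\alpha)\kappa<1$ and $8L+4M>4M$, we have $\Delta_{\min}(\epsilon)<\tfrac{1}{4M}(L/\sigma)\epsilon<\tfrac{1}{2M}(L/\sigma)\epsilon$, so the minimum is realized by the $\Delta_k$ term, and the per-iteration decrease is at least
\begin{equation*}
f(x_k)-f(x_{k+1})\;\geq\;\alpha\kappa\cdot\tfrac12(L/\sigma)\epsilon\cdot\tfrac{(1-\alpha)\kappa}{8L+4M}(L/\sigma)\epsilon\;=\;\tfrac{\alpha(1-\alpha)\kappa^{2}}{16L+8M}\bigl((L/\sigma)\epsilon\bigr)^{2}.
\end{equation*}

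To conclude, note that on unsuccessful iterations $x_{k+1}=x_k$, so the sequence $\{f(x_k)\}$ is nonincreasing and telescoping yields
\begin{equation*}
f(x_0)-f_{\mathrm{low}}\;\geq\;\sum_{k\in\mathcal{S}_{T_g(\epsilon)-1}}\bigl(f(x_k)-f(x_{k+1})\bigr)\;\geq\;|\mathcal{S}_{T_g(\epsilon)-1}|\cdot\tfrac{\alpha(1-\alpha)\kappa^{2}}{16L+8M}\bigl((L/\sigma)\epsilon\bigr)^{2},
\end{equation*}
and rearranging gives the claimed bound. The only subtle point, and what I expect to be the only place that requires attention, is verifying that $\Delta_{\min}(\epsilon)$ is indeed the smaller of the two arguments of the minimum in \eqref{sdcdf}; this is what lets us replace the minimum by a single clean expression and avoid a case split in the telescoping argument.
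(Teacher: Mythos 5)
Your proposal is correct and follows essentially the same route as the paper's proof: Lemma~\ref{eta_k_0} for the lower bound on $\eta_{\Delta_{\mathrm{max}}}(x_k)$, Lemma~\ref{lem:3.3} and A3 for the two arguments of the minimum, the observation that $\Delta_{\min}(\epsilon)$ is the smaller of the two lower bounds, and the same telescoping over successful iterations. The only cosmetic imprecision is the phrase ``the minimum is realized by the $\Delta_k$ term'' --- strictly it is the minimum of the two \emph{lower bounds} that is realized by $\Delta_{\min}(\epsilon)$, which is exactly what your displayed inequality uses, so nothing is affected.
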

\begin{proof}
Given $k \in \mathcal{S}_{T_g(\epsilon)-1}$, by \eqref{ratio_generalset}, \eqref{sdcdf}, Lemmas \ref{eta_k_0}, \ref{lem:3.3} and A3, we have \begin{align*}
    f(x_k)-f(x_{k+1})
    &\geq \alpha\kappa\eta_{\Delta_\mathrm{max}}(x_k) \min\left\{\Delta_k, \frac{\eta_{\Delta_\mathrm{max}}(x_k)}{\|H_k\|}\right\} \\
    &> \frac{\alpha\kappa}{2}\left(\frac{L}{\sigma}\epsilon\right) \min\left\{\dfrac{(1-\alpha)\kappa}{8L+4M}\left(\frac{L}{\sigma}\epsilon\right), \frac{1}{2M}\left(\frac{L}{\sigma}\epsilon\right)\right\}.
\end{align*}
Since $\alpha \in (0,1)$, $\kappa \in (0,1)$ and $L > 0$, it follows that \begin{equation}\label{k_in_S_A}
    f(x_k)-f(x_{k+1}) \geq \frac{\alpha(1-\alpha)\kappa^2}{16L+8M}\left(\frac{L}{\sigma}\epsilon\right) ^2,\quad\text{when}\,\,k\in\mathcal{S}_{T_g(\epsilon)-1}.
\end{equation} Notice that when $k\notin\mathcal{S}_{T_g(\epsilon)-1}$, then $f(x_{k})=f(x_{k+1})$. So, by A4 and \eqref{k_in_S_A} we get
\begin{eqnarray*}
f(x_{0})-f_{low}&\geq& f(x_{0})-f(x_{T_g(\epsilon)})= \sum_{k=0}^{T_g(\epsilon)-1}f(x_{k})-f(x_{k+1})\\ 
&=& \sum_{k\,\in\,\mathcal{S}_{T_g(\epsilon)-1}}f(x_{k})-f(x_{k+1})\\
&\geq &\left|\mathcal{S}_{T_g(\epsilon)-1}\right|\frac{\alpha(1-\alpha)\kappa^2}{16L+8M}\left(\frac{L}{\sigma}\epsilon\right)^2,
\end{eqnarray*}
which concludes the proof.
\end{proof}
\vspace{0.2cm}
\noindent The lemma below provides an upper bound on  $\left|\mathcal{U}_{T_{g}(\epsilon)-1}^{(1)} \cup \mathcal{U}_{T_{g}(\epsilon)-1}^{(2)}\right|$.

\begin{lemma}
\label{lem:3.5}
Suppose that A1-A3 hold, and assume that $T_g(\epsilon)\geq 1$. If $T\in\left\{1,\ldots,T_{g}(\epsilon)\right\}$, then
\begin{equation}
\left|\mathcal{U}_{T-1}^{(1)} \cup \mathcal{U}_{T-1}^{(2)}\right| \leq \log_{2}\left(\frac{(8L+4M)\Delta_{0}}{(1-\alpha)\kappa}\left(\frac{L}{\sigma}\epsilon\right)^{-1}\right) + |\mathcal{S}_{T-1}|,
\label{eq:3.13}
\end{equation}
where $\kappa$ is the constant in \eqref{sdcdf}.
\end{lemma}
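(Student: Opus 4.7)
The plan is to run the standard trust-region accounting argument that balances the geometric growth and shrinkage of $\Delta_k$ against the lower bound guaranteed by Lemma \ref{lem:3.3}. The key structural observation from the update rules of TRFD-S is that at each successful iteration the radius satisfies $\Delta_{k+1} \leq 2\Delta_k$, while at each unsuccessful iteration of either type we have exactly $\Delta_{k+1} = \tfrac{1}{2}\Delta_k$. Hence a simple telescoping from $k=0$ up to $k=T-1$ yields
\begin{equation*}
\Delta_{T} \;\leq\; \Delta_{0}\, 2^{|\mathcal{S}_{T-1}|}\, 2^{-|\mathcal{U}_{T-1}^{(1)} \cup \mathcal{U}_{T-1}^{(2)}|}.
\end{equation*}

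Next, I would invoke Lemma \ref{lem:3.3}. Since $T \leq T_g(\epsilon)$, the hypothesis $\psi_{\Delta_{\max}}(x_k) > (L/\sigma)\epsilon$ holds for every $k \in \{0,\ldots,T-1\}$, so the lemma applies and gives $\Delta_T \geq \Delta_{\min}(\epsilon) = \tfrac{(1-\alpha)\kappa}{8L+4M}(L/\sigma)\epsilon$. Combining this lower bound with the telescoping upper bound and rearranging produces
\begin{equation*}
2^{|\mathcal{U}_{T-1}^{(1)} \cup \mathcal{U}_{T-1}^{(2)}|} \;\leq\; \frac{\Delta_0}{\Delta_{\min}(\epsilon)}\, 2^{|\mathcal{S}_{T-1}|}.
\end{equation*}
Taking $\log_2$ of both sides and substituting the explicit value of $\Delta_{\min}(\epsilon)$ gives exactly \eqref{eq:3.13}.

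There is no real obstacle: the argument is essentially bookkeeping, and the only subtlety is applying Lemma \ref{lem:3.3} with the right index range, which is handled by the assumption $T \leq T_g(\epsilon)$. One should just double-check that the telescoping inequality is valid even when $\mathcal{S}_{T-1}$ or $\mathcal{U}_{T-1}^{(1)} \cup \mathcal{U}_{T-1}^{(2)}$ is empty; in both degenerate cases the corresponding exponent is zero and the inequality remains correct.
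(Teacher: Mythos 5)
Your proposal is correct and follows essentially the same route as the paper: the same telescoping bound $2^{|\mathcal{S}_{T-1}|-|\mathcal{U}_{T-1}^{(1)}\cup\,\mathcal{U}_{T-1}^{(2)}|}\Delta_0 \geq \Delta_T$ combined with the lower bound $\Delta_T \geq \Delta_{\min}(\epsilon)$ from Lemma \ref{lem:3.3}, followed by taking $\log_2$. Your explicit remark that $T \leq T_g(\epsilon)$ guarantees the hypothesis of Lemma \ref{lem:3.3} on $\{0,\ldots,T-1\}$ is a point the paper leaves implicit, but the argument is identical.
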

\begin{proof}
By the update rules for $\Delta_{k}$ in TRFD-S, we have
\begin{eqnarray}
\Delta_{k+1}&\leq & 2\Delta_{k},\quad \text{if}\quad k\in\mathcal{S}_{T-1},\label{eq:3.14}\\
\Delta_{k+1}&=&\frac{1}{2}\Delta_{k},\quad \text{if}\quad k\in \mathcal{U}_{T-1}^{(1)} \cup \mathcal{U}_{T-1}^{(2)}\label{eq:3.16}.
\end{eqnarray}
In addition, by Lemma \ref{lem:3.3} we have
\begin{equation}
\Delta_{k}\geq \Delta_{\min}(\epsilon),\quad\text{for}\quad k=0,\ldots,T,\label{eq:3.17}
\end{equation}
where $\Delta_{\min}(\epsilon)$ is defined in (\ref{delta}). So, in view of (\ref{eq:3.14})-(\ref{eq:3.17}), it follows that
\begin{equation*}
2^{\left|\mathcal{S}_{T-1}\right| - \left|\mathcal{U}_{T-1}^{(1)} \cup \,\mathcal{U}_{T-1}^{(2)}\right|}\Delta_{0} \geq \Delta_{T}\geq \Delta_{\min}(\epsilon),
\end{equation*}
which gives
\begin{equation*}
    2^{\left|\mathcal{S}_{T-1}\right| - \left|\mathcal{U}_{T-1}^{(1)} \cup \,\mathcal{U}_{T-1}^{(2)}\right|} \geq \frac{\Delta_{\min}(\epsilon)}{\Delta_{0}}.
\end{equation*}
Then, taking the logarithm on both sides, we get
\begin{equation*}
    \left|\mathcal{S}_{T-1}\right| - \left|\mathcal{U}_{T-1}^{(1)} \cup \mathcal{U}_{T-1}^{(2)}\right| \geq \log_2\left(\frac{\Delta_{\min}(\epsilon)}{\Delta_{0}}\right),
\end{equation*}
which is equivalent to
\begin{equation}\label{us}
\left|\mathcal{U}_{T-1}^{(1)} \cup \mathcal{U}_{T-1}^{(2)}\right| \leq \log_{2}\left(\frac{\Delta_0}{\Delta_{\min}(\epsilon)}\right) + \left|\mathcal{S}_{T-1}\right|.
\end{equation}
Therefore, by the definition of $\Delta_{\min}(\epsilon)$ in \eqref{delta}, we conclude that \eqref{eq:3.13} is true.
\end{proof}
\noindent Combining the previous results, we obtain the following worst-case iteration complexity bound of TRFD-S to find an $\left(\frac{L}{\sigma}\epsilon\right)$-approximate stationary point of $f$ in $\Omega$.
\begin{theorem}
\label{thm:3.1}
Suppose that A1-A4 hold, and let $T_{g}(\epsilon)$ be defined by \eqref{eq:hitting}. Then
\small
\begin{equation}
T_{g}(\epsilon) \leq \frac{(32L+16M)(f(x_0)-f_{low})}{\alpha(1-\alpha)\kappa^2}\left(\frac{L}{\sigma}\epsilon\right)^{-2} + \log_{2}\left(\frac{(8L+4M)\Delta_{0}}{(1-\alpha)\kappa}\left(\frac{L}{\sigma}\epsilon\right)^{-1}\right) + 1,
\label{eq:3.23}
\end{equation}
\normalsize
where $\kappa$ is the constant in \eqref{sdcdf}.
\end{theorem}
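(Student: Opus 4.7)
The plan is to combine the three ingredients already at hand: the partition identity \eqref{eq:motivation}, the bound on successful iterations from Lemma \ref{lem:3.4bis}, and the telescoping-type bound on unsuccessful iterations from Lemma \ref{lem:3.5}. The whole argument is essentially a bookkeeping exercise once the hard work (the descent lemma for successful iterations and the lower bound on $\Delta_k$ from Lemma \ref{lem:3.3}) has been done.

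First I would handle the trivial case $T_g(\epsilon)=0$: in this situation the inequality \eqref{eq:3.23} is immediate since the right-hand side is strictly positive (this is where the ``$+1$'' term in \eqref{eq:3.23} pays for itself). I would then assume $T_g(\epsilon)\geq 1$, which is exactly the hypothesis required by both Lemma \ref{lem:3.4bis} and Lemma \ref{lem:3.5}.

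Next I would invoke \eqref{eq:motivation} to write
\begin{equation*}
T_{g}(\epsilon) = \left|\mathcal{S}_{T_{g}(\epsilon)-1}\right| + \left|\mathcal{U}_{T_{g}(\epsilon)-1}^{(1)} \cup \mathcal{U}_{T_{g}(\epsilon)-1}^{(2)}\right|,
\end{equation*}
and apply Lemma \ref{lem:3.5} with $T=T_g(\epsilon)$ to bound the second term. This yields
\begin{equation*}
T_g(\epsilon) \leq 2\left|\mathcal{S}_{T_g(\epsilon)-1}\right| + \log_{2}\!\left(\frac{(8L+4M)\Delta_{0}}{(1-\alpha)\kappa}\left(\frac{L}{\sigma}\epsilon\right)^{-1}\right).
\end{equation*}
I would then substitute the bound from Lemma \ref{lem:3.4bis} into the $2|\mathcal{S}_{T_g(\epsilon)-1}|$ term, which doubles the numerator constant $16L+8M$ into $32L+16M$ and exactly produces the leading term of \eqref{eq:3.23}.

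There is no real obstacle in this proof: all the work is absorbed into Lemmas \ref{lem:3.4bis} and \ref{lem:3.5}, and the only subtlety is the $T_g(\epsilon)=0$ edge case, which is why a ``$+1$'' is appended to keep the bound unconditionally valid (and also to absorb any benign off-by-one coming from the fact that the definition of $T_g(\epsilon)$ counts the index at which stationarity is first achieved rather than the number of iterations strictly before it). With the trivial case dispatched and the two lemmas combined, \eqref{eq:3.23} follows directly.
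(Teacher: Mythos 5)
Your proposal is correct and follows essentially the same route as the paper: the identity \eqref{eq:motivation}, Lemma \ref{lem:3.5} with $T=T_g(\epsilon)$ to absorb the unsuccessful iterations into $\log_2(\cdot)+|\mathcal{S}_{T_g(\epsilon)-1}|$, and Lemma \ref{lem:3.4bis} to bound the resulting $2|\mathcal{S}_{T_g(\epsilon)-1}|$, which doubles $16L+8M$ to $32L+16M$. The only cosmetic difference is that the paper dispatches the edge case as $T_g(\epsilon)\leq 1$ and argues from $T_g(\epsilon)\geq 2$, whereas you treat $T_g(\epsilon)=0$ separately and argue from $T_g(\epsilon)\geq 1$; both are valid since the lemmas only require $T_g(\epsilon)\geq 1$.
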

\begin{proof}
If $T_{g}(\epsilon) \leq 1$, then we have that (\ref{eq:3.23}) is true. Let us assume that $T_{g}(\epsilon) \geq 2$. By (\ref{eq:motivation}),
\begin{equation*}
T_{g}(\epsilon) = \left|\mathcal{S}_{T_{g}(\epsilon)-1}\right| + \left|\mathcal{U}_{T_{g}(\epsilon)-1}^{(1)} \cup \mathcal{U}_{T_{g}(\epsilon)-1}^{(2)}\right|.
\end{equation*}
Then, (\ref{eq:3.23}) follows from Lemmas \ref{lem:3.4bis} and \ref{lem:3.5}.
\end{proof}
\noindent Since each iteration of TRFD-S requires at most $(n+1)$ evaluations of $f$, from Theorem \ref{thm:3.1} we obtain the following upper bound on the number of function evaluations required by TRFD-S to find an $\left(\frac{L}{\sigma}\epsilon\right)$-approximate stationary point of $f$ in $\Omega$.
\vspace{0.2cm}
\begin{corollary}\label{cor:1}
Suppose that A1-A4 hold, and let $FE_{T_{g}(\epsilon)}$ be the number of function evaluations executed by TRFD-S up to the $(T_{g}(\epsilon)-1)$-st iteration. Then
\small
\begin{equation}\label{cor:3.12}
FE_{T_{g}(\epsilon)} \leq (n+1)\left[\frac{(32L+16M)(f(x_0)-f_{low})}{\alpha(1-\alpha)\kappa^2}\left(\frac{L}{\sigma}\epsilon\right)^{-2} + \log_{2}\left(\frac{(8L+4M)\Delta_{0}}{(1-\alpha)\kappa}\left(\frac{L}{\sigma}\epsilon\right)^{-1}\right) + 1\right].
\end{equation}
\normalsize
\label{cor:3.1}    
\end{corollary}
\noindent In view of \eqref{cor:3.12}, TRFD-S needs no more than
\begin{equation}\label{cor_nc}
    \mathcal{O}\left(n\left(\frac{\sigma}{L}\right)^{2}L\left(f(x_0)-f_{low}\right)\epsilon^{-2}\right)
\end{equation}
function evaluations to reach a point $x_{k} \in \Omega$ such that $\psi_{\Delta_{\mathrm{max}}}(x_{k})\leq \frac{L}{\sigma}\epsilon$. Therefore, in the case where the user-defined parameter $\sigma$ equals the Lipschitz constant $L$, we get a worst-case evaluation complexity of
\begin{equation}\label{sigma_L}
    \mathcal{O}\left(n L\left(f(x_0)-f_{low}\right)\epsilon^{-2}\right)
\end{equation}
to satisfy $\psi_{\Delta_{\mathrm{max}}}(x_{k})\leq \epsilon$. Otherwise, when $\sigma \neq L$, Table \ref{tab:1} shows the different impacts of $\sigma$.
\begin{table}[h]
\centering
\begin{tabular}{|c|c|c|}
\hline
\textbf{Value of $\sigma$} & \textbf{Impact on \eqref{cor_nc}} & \makecell{\textbf{Impact on the target}\\ \textbf{accuracy $\left(\frac{L}{\sigma}\epsilon\right)$}} \\ \hline
$\sigma<L$ & \makecell{\eqref{cor_nc} lower than \eqref{sigma_L} \\ by a quadratic factor $\left(\frac{\sigma}{L}\right)^2$} & \makecell{Accuracy weaker than $\epsilon$ \\ by a factor $\left(\frac{L}{\sigma}\right)$} \\ \hline
$\sigma>L$ & \makecell{\eqref{cor_nc} larger than \eqref{sigma_L} \\ by a quadratic factor $\left(\frac{\sigma}{L}\right)^2$} & \makecell{Accuracy stricter than $\epsilon$ \\ by a factor $\left(\frac{L}{\sigma}\right)$} \\ \hline
\end{tabular}
\caption{Impacts of the user-defined parameter $\sigma$ for nonconvex problems}
\label{tab:1}
\end{table}
\begin{remark}
    In TRFD-S, if the user-defined parameter $\sigma$ underestimates $L$, then we can only guarantee that $\psi_{\Delta_{\mathrm{max}}}(x_{T_g(\epsilon)}) \leq \frac{L}{\sigma}\epsilon$, where $\frac{L}{\sigma}\epsilon$ can be significantly larger than $\epsilon$.
\end{remark}
\begin{remark}
    By Lemma \ref{prop_psi} (d), given $\sigma > 0$ and $\Delta_{\mathrm{max}}$, taking a larger upper bound $r_2 \geq \Delta_{\mathrm{max}}$ will lead to a tighter achieved accuracy:
\begin{equation*}
    \psi_{r_2}(x_{T_g(\epsilon)}) \leq \psi_{\Delta_{\mathrm{max}}}(x_{T_g(\epsilon)}) \leq \left(\frac{L}{\sigma}\epsilon\right), \quad 0<\Delta_{\mathrm{max}}\leq r_2.
\end{equation*}
\end{remark}
\subsection{Worst-Case Complexity Bound for Convex Problems}

Let us consider two additional assumptions:
\vspace{2mm}
\begin{mdframed}
\textbf{A5.} $f$ is convex.
\\[0.2cm]
\textbf{A6.} $f$ has a global minimizer $x^{*}$ in $\Omega$, and $D_{0}\equiv\sup_{x\in\mathcal{L}_{f}(x_{0})}\left\{\|x-x^{*}\|\right\}<+\infty$, for $\mathcal{L}_{f}(x_{0})=\left\{x\in\Omega\,:\,f(x)\leq f(x_{0})\right\}$.
\end{mdframed}
\vspace{2mm}
\noindent The lemma below establishes the relationship between the stationarity measure and the functional residual when the reference radius $r$ is sufficiently large.
\vspace{0.2cm}
\begin{lemma}
    \label{lem:3.2.1}
    (Lemma 3.14 in \cite{grapiglia}). Suppose that A1, A2, A5 and A6 hold, and let $x_k\in\mathcal{L}_{f}(x_{0})$. If $r\geq D_{0}$, then
    \begin{equation*}
        \psi_{r}(x_k) \geq \frac{f(x_k)-f(x^{*})}{r}.
    \end{equation*}
\end{lemma}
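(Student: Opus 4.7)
The plan is to exhibit a single feasible candidate $s$ in the minimization that defines $\psi_r(x_k)$, and then use convexity to translate the resulting inner-product bound into a functional-residual bound.

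First, I would pick $s = x^{*}-x_k$ as a trial point. To argue that it is admissible for the minimization problem in \eqref{psi}, I need two things: (i) $s \in \Omega-\{x_k\}$, which is immediate because $x^{*}\in\Omega$ by A6; and (ii) $\|s\|\leq r$. For (ii), since $x_k \in \mathcal{L}_f(x_0)$, the definition of $D_0$ in A6 yields $\|x_k - x^{*}\|\leq D_0$, and the assumption $r\geq D_0$ then gives $\|s\| = \|x^{*}-x_k\| \leq D_0 \leq r$.

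Next, since $s$ is feasible for the minimization in the definition of $\psi_r(x_k)$, we obtain
\begin{equation*}
-\min_{s'\in \Omega-\{x_k\},\,\|s'\|\leq r}\langle \nabla f(x_k), s'\rangle \;\geq\; -\langle \nabla f(x_k), x^{*}-x_k\rangle,
\end{equation*}
so that $\psi_r(x_k) \geq \frac{1}{r}\bigl(-\langle \nabla f(x_k), x^{*}-x_k\rangle\bigr)$.

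Finally, I would invoke convexity (A5) at $x_k$ evaluated at $x^{*}$:
\begin{equation*}
f(x^{*}) \;\geq\; f(x_k) + \langle \nabla f(x_k), x^{*}-x_k\rangle,
\end{equation*}
which rearranges to $-\langle \nabla f(x_k), x^{*}-x_k\rangle \geq f(x_k)-f(x^{*})$. Combining with the previous display gives the desired inequality. There is no real obstacle here; the only point requiring any care is verifying that the candidate step $x^{*}-x_k$ sits inside the trust region of radius $r$, which is precisely what the hypothesis $r\geq D_0$ together with $x_k\in\mathcal{L}_f(x_0)$ is designed to ensure.
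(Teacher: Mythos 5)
Your proof is correct: the choice $s = x^{*}-x_k$ is feasible precisely because $x^{*}\in\Omega$ and $\|x_k-x^{*}\|\leq D_0\leq r$, and the convexity inequality then yields the claim. The paper itself imports this lemma from \cite{grapiglia} without reproducing the proof, but your argument is the standard one for this statement and matches what the cited reference does.
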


\noindent The next lemma provides a lower bound on the approximate stationarity measure $\eta_{\Delta_{\mathrm{max}}}(x_{k})$ in terms of the functional residual.
\begin{lemma}
    \label{lem:3.2.2}
    Suppose that A1, A2, A5 and A6 hold, and let $x_k$ be generated by TRFD-S. If $\Delta_{\mathrm{max}}\geq D_{0}$ and
    \begin{equation}\label{resid_eps}
        f(x_k) - f(x^*) > \Delta_{\mathrm{max}}\left(\frac{L}{\sigma}\epsilon\right),
    \end{equation}
    then
    \begin{equation}\label{eta_resid}
        \eta_{\Delta_{\mathrm{max}}}(x_{k}) > \dfrac{f(x_{k})-f(x^{*})}{2\Delta_{\mathrm{max}}}.
    \end{equation}
\end{lemma}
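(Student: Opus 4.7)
The argument is a convex analogue of Lemma \ref{eta_k_0}: whereas there the hypothesis $\psi_{\Delta_{\mathrm{max}}}(x_k) > L\epsilon/\sigma$ is assumed directly, here it will be produced from the functional residual by means of Lemma \ref{lem:3.2.1}. The plan is therefore to (i) verify that $x_k \in \mathcal{L}_f(x_0)$ so that Lemma \ref{lem:3.2.1} is applicable, (ii) translate hypothesis \eqref{resid_eps} into $\psi_{\Delta_{\mathrm{max}}}(x_k) > L\epsilon/\sigma$, and (iii) reuse the finite-difference triangle-inequality estimate from the proof of Lemma \ref{eta_k_0} that yields $\eta_{\Delta_{\mathrm{max}}}(x_k) > \tfrac{1}{2}\psi_{\Delta_{\mathrm{max}}}(x_k)$.

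For step (i), I would observe that in TRFD-S the iterate is updated only on successful iterations, in which case $\rho_k \geq \alpha$ together with the Cauchy-type bound \eqref{sdcdf} gives $f(x_{k+1}) \leq f(x_k) - \alpha(m_k(0)-m_k(d_k)) \leq f(x_k)$. Hence $\{f(x_k)\}$ is nonincreasing, so $x_k \in \mathcal{L}_f(x_0)$ for every $k \geq 0$. Because $\Delta_{\mathrm{max}} \geq D_0$, Lemma \ref{lem:3.2.1} with $r = \Delta_{\mathrm{max}}$ then yields $\psi_{\Delta_{\mathrm{max}}}(x_k) \geq (f(x_k) - f(x^*))/\Delta_{\mathrm{max}}$, and hypothesis \eqref{resid_eps} immediately upgrades this to $\psi_{\Delta_{\mathrm{max}}}(x_k) > L\epsilon/\sigma$, which is precisely the premise of Lemma \ref{eta_k_0}.

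The conclusion then follows by reproducing the short triangle-inequality step from the proof of Lemma \ref{eta_k_0}: combining Lemma \ref{psieta} with $\tau_k \leq \tau_0 = \epsilon/(\sigma\sqrt{n})$ gives
\[
\psi_{\Delta_{\mathrm{max}}}(x_k) - \eta_{\Delta_{\mathrm{max}}}(x_k) \leq \frac{L}{2}\tau_k\sqrt{n} \leq \frac{L\epsilon}{2\sigma} < \frac{1}{2}\psi_{\Delta_{\mathrm{max}}}(x_k),
\]
where the last strict inequality uses step (ii). Rearranging and then invoking the lower bound from Lemma \ref{lem:3.2.1} produces
\[
\eta_{\Delta_{\mathrm{max}}}(x_k) > \frac{1}{2}\psi_{\Delta_{\mathrm{max}}}(x_k) \geq \frac{f(x_k) - f(x^*)}{2\Delta_{\mathrm{max}}},
\]
which is \eqref{eta_resid}. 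The only mildly delicate point is confirming $x_k \in \mathcal{L}_f(x_0)$ so as to unlock the convex estimate in Lemma \ref{lem:3.2.1}; everything else is a direct concatenation of inequalities already established earlier in the paper.
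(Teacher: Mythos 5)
Your proof is correct and follows essentially the same route as the paper's: apply Lemma \ref{lem:3.2.1} with $r=\Delta_{\mathrm{max}}$ to convert \eqref{resid_eps} into $\psi_{\Delta_{\mathrm{max}}}(x_k) > \tfrac{L}{\sigma}\epsilon$, invoke the inequality $\eta_{\Delta_{\mathrm{max}}}(x_k) > \tfrac{1}{2}\psi_{\Delta_{\mathrm{max}}}(x_k)$ from the proof of Lemma \ref{eta_k_0}, and apply Lemma \ref{lem:3.2.1} once more. Your explicit verification that $x_k\in\mathcal{L}_f(x_0)$ (via monotonicity of $\{f(x_k)\}$) is a point the paper leaves implicit, and it is a welcome addition.
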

\begin{proof}
By Lemma \ref{lem:3.2.1} and \eqref{resid_eps}, it follows that
\begin{equation}\label{psi_str}
    \psi_{\Delta_{\mathrm{max}}}(x_k) > \left(\frac{L}{\sigma}\epsilon\right).
\end{equation}
Therefore, by \eqref{eta_psi} and Lemma \ref{lem:3.2.1}, we obtain \eqref{eta_resid}, which concludes the proof.
\end{proof}
\noindent Next, we establish an upper bound for $\frac{f(x_{k})-f(x^{*})}{\Delta_{k}}$.
\begin{lemma}
    \label{lem:3.2.3}
    Suppose that A1-A3, A5 and A6 hold. Given $T \geq 1$, let $\left\{x_{k}\right\}_{k=0}^{T}$ and $\left\{\Delta_{k}\right\}_{k=0}^{T}$ be generated by TRFD-S. If $\Delta_{\mathrm{max}}\geq D_{0}$ and
    \begin{equation*}
        f(x_k) - f(x^*) > \Delta_{\mathrm{max}}\left(\frac{L}{\sigma}\epsilon\right), \quad \text{for} \quad k=0,...,T-1,
    \end{equation*}
    then
    \begin{equation}
    \left(\frac{1}{\Delta_{k}}\right)(f(x_{k})-f(x^{*}))\leq\max\left\{\left(\frac{1}{\Delta_{0}}\right)(f(x_{0})-f(x^{*})),\dfrac{(8L+4M)\Delta_{\mathrm{max}}}{(1-\alpha)\kappa}\right\}\equiv\beta,
    \label{eq:3.2.4}
    \end{equation}
    for $k=0,\ldots,T$, where $\kappa$ is the constant in \eqref{sdcdf}.
\end{lemma}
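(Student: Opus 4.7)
The plan is to prove the bound by induction on $k \in \{0, \ldots, T\}$. The base case $k = 0$ is immediate: $(f(x_0) - f(x^*))/\Delta_0 \leq \beta$ by definition. For the inductive step, assume the bound holds at $k$ and split into cases according to the type of iteration.

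\textbf{Successful case} ($k \in \mathcal{S}$). By the acceptance test $\rho_k \geq \alpha$ together with the nonnegativity of the model decrease in \eqref{sdcdf}, I get $f(x_{k+1}) \leq f(x_k)$. Since $x_{k+1} = x_k + d_k \in \Omega$ by the subproblem constraints, $f(x_{k+1}) \geq f(x^*)$. Combined with $\Delta_{k+1} \geq \Delta_k$ from Step 3, this yields
\begin{equation*}
\frac{f(x_{k+1})-f(x^*)}{\Delta_{k+1}} \leq \frac{f(x_k)-f(x^*)}{\Delta_k} \leq \beta,
\end{equation*}
where the last inequality comes from the induction hypothesis.

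\textbf{Unsuccessful case} ($k \in \mathcal{U}^{(1)} \cup \mathcal{U}^{(2)}$). Here $x_{k+1} = x_k$ and $\Delta_{k+1} = \tfrac{1}{2}\Delta_k$, so the ratio literally doubles; the induction hypothesis is not enough, and this is the main obstacle. The idea is to use unsuccessfulness to derive an \emph{absolute} bound. Since $\rho_k < \alpha$, the contrapositive of Lemma \ref{lem:3.2} gives
\begin{equation*}
\Delta_k > \frac{(1-\alpha)\kappa\,\eta_{\Delta_{\max}}(x_k)}{2L+\|H_k\|}.
\end{equation*}
Monotonicity of $f$ along the iterates (shown in the previous case) ensures $x_k \in \mathcal{L}_f(x_0)$, so the hypotheses of Lemma \ref{lem:3.2.2} are met (using the assumption $f(x_k)-f(x^*) > \Delta_{\max}(L/\sigma)\epsilon$). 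Applying that lemma and A3, I obtain
\begin{equation*}
\Delta_k > \frac{(1-\alpha)\kappa}{2L+M}\cdot\frac{f(x_k)-f(x^*)}{2\Delta_{\max}} = \frac{(1-\alpha)\kappa\,(f(x_k)-f(x^*))}{(4L+2M)\Delta_{\max}}.
\end{equation*}
Rearranging and multiplying by $2$ (to account for the halving) gives
\begin{equation*}
\frac{f(x_{k+1})-f(x^*)}{\Delta_{k+1}} = \frac{2(f(x_k)-f(x^*))}{\Delta_k} < \frac{(8L+4M)\Delta_{\max}}{(1-\alpha)\kappa} \leq \beta.
\end{equation*}

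Combining the two cases closes the induction and establishes \eqref{eq:3.2.4} for all $k = 0, \ldots, T$. The key technical point is that unsuccessfulness cannot persist when the functional residual is small relative to $\Delta_k$: Lemma \ref{lem:3.2} forces $\Delta_k$ to be proportional to $\eta_{\Delta_{\max}}(x_k)$, which by convexity (Lemma \ref{lem:3.2.2}) is proportional to the residual $f(x_k)-f(x^*)$. This proportionality supplies precisely the absolute upper bound on $(f(x_k)-f(x^*))/\Delta_k$ needed to absorb the doubling.
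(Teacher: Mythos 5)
Your proof is correct and follows essentially the same route as the paper's: induction on $k$, with the successful case handled by monotonicity of $f$ and of $\Delta_k$, and the unsuccessful case handled by combining the contrapositive of Lemma \ref{lem:3.2} with Lemma \ref{lem:3.2.2} to turn the lower bound on $\Delta_k$ into an absolute bound on the ratio that absorbs the halving. The only difference is cosmetic (you substitute the residual bound into the $\Delta_k$ inequality before rearranging, while the paper chains the inequalities directly), and your explicit remarks on $x_k\in\mathcal{L}_f(x_0)$ and $f(x_{k+1})\geq f(x^*)$ are careful touches the paper leaves implicit.
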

\begin{proof}
    By the definition of $\beta$, (\ref{eq:3.2.4}) is true for $k=0$. Suppose that (\ref{eq:3.2.4}) is true for some $k\in\left\{0,\ldots,T-1\right\}$. Let us show that it is also true for $k+1$.
    \\[0.3cm]
    In the case where $\rho_k\geq \alpha$, by Step 3 of TRFD-S we have $\Delta_{k+1}\geq\Delta_{k}$. Since $f(x_{k+1})\leq f(x_{k})$, it follows that
    \begin{equation*}
    \left(\frac{1}{\Delta_{k+1}}\right)(f(x_{k+1})-f(x^{*}))\leq\left(\frac{1}{\Delta_{k}}\right)(f(x_{k})-f(x^{*}))\leq\beta,
    \end{equation*}
    where the last inequality is the induction assumption. Therefore, (\ref{eq:3.2.4}) holds for $k+1$ in this case. In the case where $\rho_k< \alpha$, by Step 4 of TRFD-S we have 
    \begin{equation}
          \Delta_{k+1}=\frac{1}{2}\Delta_{k}.
        \label{eq:3.2.5}
    \end{equation}
    In addition, in view of Lemma \ref{lem:3.2} and A3, we must have
    \begin{equation}
        \Delta_{k}>\dfrac{(1-\alpha)\kappa\eta_{\Delta_\mathrm{max}}(x_{k})}{2L+M},
        \label{eq:3.2.6}
    \end{equation}
    since otherwise, by Lemma \ref{lem:3.2}, we would have $\rho_k\geq \alpha$, contradicting our assumption that $\rho_k<\alpha$. Notice that (\ref{eq:3.2.6}) is equivalent to
    \begin{equation}\label{delta_eta}
     \left(\frac{1}{\Delta_{k}}\right)\eta_{\Delta_{\mathrm{max}}}(x_{k})<\dfrac{2L+M}{(1-\alpha)\kappa}.   
    \end{equation}
    Finally, it follows from (\ref{eq:3.2.5}), Lemma \ref{lem:3.2.2} and (\ref{delta_eta}) that
    \begin{eqnarray*}
        \left(\frac{1}{\Delta_{k+1}}\right)(f(x_{k+1})-f(x^{*}))&=&\left(\frac{2}{\Delta_{k}}\right)(f(x_{k+1})-f(x^{*})) = \left(\frac{2}{\Delta_{k}}\right)(f(x_{k})-f(x^{*}))\\
        &< &\dfrac{4\Delta_{\mathrm{max}}}{\Delta_{k}}\eta_{\Delta_{\mathrm{max}}}(x_{k}) < 4\Delta_{\mathrm{max}}\dfrac{2L+M}{(1-\alpha)\kappa}\\
        &\leq &\beta,
    \end{eqnarray*}
that is, (\ref{eq:3.2.4}) also holds for $k+1$ in this case, which concludes the proof.
\end{proof}

\noindent Let 
\begin{equation}
T_{f}(\epsilon)=\inf\left\{k\in\mathbb{N}\,:\,f(x_{k})-f(x^{*})\leq\Delta_{\mathrm{max}}\left(\frac{L}{\sigma}\epsilon\right)\right\}
\label{eq:3.2.7}
\end{equation}
be the first iteration index reaching a $\Delta_{\mathrm{max}}\left(\frac{L}{\sigma}\epsilon\right)$-approximate solution of (\ref{eq:first}) in $\Omega$, if it exists. Our goal is to establish a finite upper bound for $T_{f}(\epsilon)$. In this context, the lemma below provides an upper bound on $\left|\mathcal{S}_{T_{f}(\epsilon)-1}\right|$.
\begin{lemma}
    \label{lem:3.2.4}
    Suppose that A1-A3, A5 and A6 hold, and assume that $T_{f}(\epsilon)\geq 2$. If $\Delta_{\mathrm{max}}\geq D_{0}$, then
    \begin{equation}
    \left|\mathcal{S}_{T_f(\epsilon)-1}\right| \leq 1+\dfrac{2\beta}{\alpha\kappa}\left(\frac{L}{\sigma}\epsilon\right)^{-1},
    \label{eq:3.2.8}
    \end{equation}
    where $\beta$ is defined in \eqref{eq:3.2.4} and $\kappa$ is the constant in \eqref{sdcdf}.
\end{lemma}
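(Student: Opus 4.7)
The plan is to telescope the reciprocal $1/\phi_k$, where $\phi_k\equiv f(x_k)-f(x^*)$, exploiting that $\phi_k$ is nonincreasing and changes only on successful iterations. The first step is to derive, for each $k\in\mathcal{S}_{T_f(\epsilon)-1}$, the quadratic-in-$\phi_k$ descent
$$\phi_k - \phi_{k+1} \;\geq\; \frac{\alpha\kappa}{2\Delta_{\mathrm{max}}\beta}\,\phi_k^2.$$

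Obtaining this bound requires handling the two branches of $\min\{\Delta_k,\eta_{\Delta_{\mathrm{max}}}(x_k)/\|H_k\|\}$ appearing in \eqref{sdcdf}. In the first branch, combining the acceptance test $\rho_k\geq\alpha$ with Lemma \ref{lem:3.2.2} (which gives $\eta_{\Delta_{\mathrm{max}}}(x_k)>\phi_k/(2\Delta_{\mathrm{max}})$) and Lemma \ref{lem:3.2.3} (which gives $\Delta_k\geq\phi_k/\beta$) produces the desired quadratic descent directly. In the second branch, I would instead square $\eta_{\Delta_{\mathrm{max}}}(x_k)$ and invoke $\|H_k\|\leq M$ from A3 to obtain $\phi_k-\phi_{k+1}\geq \alpha\kappa\phi_k^2/(4M\Delta_{\mathrm{max}}^2)$; to reconcile the two branches, I would note that the definition of $\beta$ in \eqref{eq:3.2.4} forces $\beta \geq (8L+4M)\Delta_{\mathrm{max}}/((1-\alpha)\kappa) > 2M\Delta_{\mathrm{max}}$, so $1/(4M\Delta_{\mathrm{max}}^2)\geq 1/(2\Delta_{\mathrm{max}}\beta)$ and the quadratic bound holds in both cases.

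Next, since $\phi_{k+1}>\Delta_{\mathrm{max}}(L\epsilon/\sigma)>0$ whenever $k+1\leq T_f(\epsilon)-1$, I can divide the quadratic descent by $\phi_k\phi_{k+1}$ and use $\phi_k\geq\phi_{k+1}$ to obtain
$$\frac{1}{\phi_{k+1}}-\frac{1}{\phi_k} \;\geq\; \frac{\alpha\kappa}{2\Delta_{\mathrm{max}}\beta}.$$
On unsuccessful iterations $\phi_{k+1}=\phi_k$ so the corresponding increments vanish. Letting $k_K=\max\mathcal{S}_{T_f(\epsilon)-1}$, summing over $k=0,\ldots,k_K-1$ telescopes to $1/\phi_{k_K}-1/\phi_0<1/\phi_{k_K}$, with exactly $|\mathcal{S}_{T_f(\epsilon)-1}|-1$ nonzero summands. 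Combining this with $\phi_{k_K}>\Delta_{\mathrm{max}}(L\epsilon/\sigma)$ (which follows from $k_K<T_f(\epsilon)$ and \eqref{eq:3.2.7}) gives
$$\bigl(|\mathcal{S}_{T_f(\epsilon)-1}|-1\bigr)\,\frac{\alpha\kappa}{2\Delta_{\mathrm{max}}\beta} \;<\; \frac{1}{\Delta_{\mathrm{max}}(L\epsilon/\sigma)},$$
which, after rearrangement, is precisely \eqref{eq:3.2.8}.

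The main obstacle I anticipate is the first step: establishing the quadratic-in-$\phi_k$ descent uniformly across both branches of the trust-region minimum. This is exactly where the particular form of $\beta$ in \eqref{eq:3.2.4} plays its role, absorbing the $M$-dependent estimate arising from the second branch into the $\beta$-dependent estimate of the first, so that a single per-step decrement of $1/\phi_k$ can be used in the telescoping argument.
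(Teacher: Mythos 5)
Your proposal is correct and follows essentially the same route as the paper's proof: a quadratic-in-$\phi_k$ decrease on successful iterations obtained from \eqref{sdcdf} together with Lemmas \ref{lem:3.2.2} and \ref{lem:3.2.3} (your two-branch treatment of the $\min$, using $\beta>2M\Delta_{\mathrm{max}}$, is exactly how the paper collapses $\min\{1/\beta,1/(2\Delta_{\mathrm{max}}M)\}$ to $1/\beta$), followed by telescoping $1/\phi_k$ and bounding $\phi$ from below at the last index before $T_f(\epsilon)$. The only cosmetic difference is that you truncate the sum at $k_K=\max\mathcal{S}_{T_f(\epsilon)-1}$ while the paper sums to $T_f(\epsilon)-2$ and uses $|\mathcal{S}_{T_f(\epsilon)-1}|\leq 1+|\mathcal{S}_{T_f(\epsilon)-2}|$; both yield the additive $1$ in \eqref{eq:3.2.8}.
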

\begin{proof}
    Let $k\in\mathcal{S}_{T_f(\epsilon)-2}$. By \eqref{ratio_generalset}, (\ref{sdcdf}), Lemmas \ref{lem:3.2.2}, \ref{lem:3.2.3} and A3, we have
    \begin{eqnarray}
    f(x_{k})-f(x_{k+1})&\geq &\alpha\kappa\eta_{\Delta_{\mathrm{max}}}(x_k) \min\left\{\Delta_k, \frac{\eta_{\Delta_\mathrm{max}}(x_k)}{\|H_k\|}\right\} \nonumber\\
    &\geq &\alpha\kappa\dfrac{f(x_{k})-f(x^{*})}{2\Delta_{\mathrm{max}}}\min\left\{\frac{f(x_{k})-f(x^{*})}{\beta}, \dfrac{f(x_{k})-f(x^{*})}{2\Delta_\mathrm{max}M}\right\}\nonumber\\
    &= &\dfrac{\alpha\kappa(f(x_{k})-f(x^{*}))^2}{2\Delta_\mathrm{max}}\min\left\{\frac{1}{\beta}, \dfrac{1}{2\Delta_\mathrm{max}M}\right\}.\nonumber
    \end{eqnarray}
    By the definition of $\beta$ in \eqref{eq:3.2.4}, since $\alpha \in (0,1)$, $\kappa \in (0,1)$ and $L > 0$, we get
    \begin{equation}\label{eq:3.2.9}
        f(x_{k})-f(x_{k+1}) \geq \dfrac{\alpha\kappa(f(x_k)-f(x^*))^2}{2\Delta_{\mathrm{max}}\beta}.
    \end{equation}
    Denoting $\delta_{k}=f(x_{k})-f(x^{*})$, (\ref{eq:3.2.9}) becomes
    \begin{equation*}
    \delta_{k}-\delta_{k+1}\geq\dfrac{\alpha\kappa}{2\Delta_\mathrm{max}\beta}\delta_{k}^{2}.
    \end{equation*}
    Consequently,
    \begin{equation}
    \dfrac{1}{\delta_{k+1}}-\dfrac{1}{\delta_{k}}=\dfrac{\delta_{k}-\delta_{k+1}}{\delta_{k}\delta_{k+1}}\geq\dfrac{\frac{\alpha\kappa}{2\Delta_\mathrm{max}\beta}\delta_{k}^{2}}{\delta_{k}^{2}}=\dfrac{\alpha\kappa}{2\Delta_\mathrm{max}\beta},\quad\text{when}\,\,k\in\mathcal{S}_{T_f(\epsilon)-2}.
    \label{eq:3.2.10}    
    \end{equation}
    Since $\delta_{k+1}=\delta_{k}$ for any $k\notin \mathcal{S}_{T_f(\epsilon)-2}$, it follows from (\ref{eq:3.2.10}) that
    \begin{eqnarray*}
        \dfrac{1}{\delta_{T_f(\epsilon)-1}}-\dfrac{1}{\delta_{0}}&=&\sum_{k=0}^{T_f(\epsilon)-2}\dfrac{1}{\delta_{k+1}}-\dfrac{1}{\delta_{k}}=\sum_{k\in\mathcal{S}_{T_f(\epsilon)-2}}\dfrac{1}{\delta_{k+1}}-\dfrac{1}{\delta_{k}}
        \geq \left|\mathcal{S}_{T_f(\epsilon)-2}\right|\dfrac{\alpha\kappa}{2\Delta_\mathrm{max}\beta}.
    \end{eqnarray*}
    Therefore, as $\delta_0>0$, we have
    \begin{equation*}
        \Delta_{\mathrm{max}}\left(\frac{L}{\sigma}\epsilon\right)<f(x_{T_f(\epsilon)-1})-f(x^{*})=\delta_{T_f(\epsilon)-1}\leq\dfrac{2\Delta_\mathrm{max}\beta}{\alpha\kappa\left|\mathcal{S}_{T_f(\epsilon)-2}\right|},
    \end{equation*}
    which implies
    \begin{equation*}
    \left|\mathcal{S}_{T_f(\epsilon)-1}\right|\leq 1 + \left|\mathcal{S}_{T_f(\epsilon)-2}\right| < 1+\dfrac{2\beta}{\alpha\kappa}\left(\frac{L}{\sigma}\epsilon\right)^{-1},
    \end{equation*}
    that is, \eqref{eq:3.2.8} is true.
\end{proof}

\noindent The next lemma establishes the relationship between $T_{f}(\epsilon)$ and $T_{g}(\epsilon)$.

\begin{lemma}\label{lem:tf}
    Suppose that A1, A2, A5 and A6 hold, and let $T_f(\epsilon)$ and $T_{g}(\epsilon)$ be defined by \eqref{eq:3.2.7} and \eqref{eq:hitting}, respectively. If $\Delta_{\mathrm{max}} \geq D_0$, then $T_f(\epsilon) \leq T_{g}(\epsilon)$.
\end{lemma}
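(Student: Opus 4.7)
The plan is to establish the inequality by contraposition at the level of the stopping indices: show that as soon as $k$ reaches $T_g(\epsilon)$, the functional residual already satisfies the criterion defining $T_f(\epsilon)$. The key ingredient is Lemma \ref{lem:3.2.1}, which bounds $\psi_{\Delta_{\max}}(x_k)$ from below by a functional-residual quantity whenever the reference radius $r$ is at least $D_0$ and the iterate lies in the level set $\mathcal{L}_f(x_0)$.

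First, I would dispose of the trivial case $T_g(\epsilon) = +\infty$, for which the claimed inequality holds vacuously. In the remaining case, I would verify that $x_k \in \mathcal{L}_f(x_0)$ for every $k$. This follows directly from the structure of TRFD-S: on unsuccessful iterations $x_{k+1} = x_k$ and the function value is unchanged, whereas on successful iterations the acceptance test $\rho_k \geq \alpha$ together with the Cauchy-type decrease \eqref{sdcdf} (with $\eta_{\Delta_{\max}}(x_k) \geq 0$) ensures $f(x_{k+1}) \leq f(x_k)$. Hence $\{f(x_k)\}$ is monotonically nonincreasing, so $f(x_k) \leq f(x_0)$ for all $k$, and $x_k \in \mathcal{L}_f(x_0)$.

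Now set $k = T_g(\epsilon)$. Since $\Delta_{\max} \geq D_0$ and $x_k \in \mathcal{L}_f(x_0)$, Lemma \ref{lem:3.2.1} yields
\begin{equation*}
\frac{f(x_{T_g(\epsilon)}) - f(x^*)}{\Delta_{\max}} \;\leq\; \psi_{\Delta_{\max}}(x_{T_g(\epsilon)}) \;\leq\; \left(\frac{L}{\sigma}\epsilon\right),
\end{equation*}
where the second inequality is the definition of $T_g(\epsilon)$ in \eqref{eq:hitting}. Rearranging gives
\begin{equation*}
f(x_{T_g(\epsilon)}) - f(x^*) \;\leq\; \Delta_{\max}\left(\frac{L}{\sigma}\epsilon\right),
\end{equation*}
so by the definition \eqref{eq:3.2.7} of $T_f(\epsilon)$, this index $T_g(\epsilon)$ belongs to the set over which the infimum defining $T_f(\epsilon)$ is taken. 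Thus $T_f(\epsilon) \leq T_g(\epsilon)$.

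I do not anticipate a significant obstacle here: the argument is essentially a direct transfer of Lemma \ref{lem:3.2.1} from a bound on $\psi_{\Delta_{\max}}$ to a bound on the functional residual. The only subtlety worth flagging is checking monotonicity of $\{f(x_k)\}$, which is needed to invoke Lemma \ref{lem:3.2.1}; this is immediate from the method's update rules but deserves to be stated explicitly in the proof.
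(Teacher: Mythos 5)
Your proof is correct and follows essentially the same route as the paper: both arguments hinge on applying Lemma \ref{lem:3.2.1} at the iterate $x_{T_g(\epsilon)}$ to convert the bound $\psi_{\Delta_{\mathrm{max}}}(x_{T_g(\epsilon)})\leq\frac{L}{\sigma}\epsilon$ into the functional-residual bound $f(x_{T_g(\epsilon)})-f(x^*)\leq\Delta_{\mathrm{max}}\frac{L}{\sigma}\epsilon$; the paper phrases this as a contradiction while you argue directly, which is an immaterial difference. Your explicit check that $x_k\in\mathcal{L}_f(x_0)$ via monotonicity of $\{f(x_k)\}$ is a small point of care the paper leaves implicit.
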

\begin{proof}
    Suppose by contradiction that $T_f(\epsilon) > T_g(\epsilon)$. Then, by $\Delta_{\mathrm{max}} \geq D_0$, Lemma \ref{lem:3.2.1} and the definition of $T_g(\epsilon)$, we would have the contradiction
    \begin{equation*}
        \left(\frac{L}{\sigma}\epsilon\right) < \frac{f(x_{T_g(\epsilon)})-f(x^*)}{\Delta_{\mathrm{max}}} \leq \psi_{\Delta_{\mathrm{max}}}(x_{T_g(\epsilon)}) \leq \left(\frac{L}{\sigma}\epsilon\right).
    \end{equation*}
    So, we conclude that $T_f(\epsilon) \leq T_{g}(\epsilon)$.
\end{proof}
\noindent The following theorem gives an upper bound on the number of iterations required by TRFD-S to reach a $\Delta_{\mathrm{max}}\left(\frac{L}{\sigma}\epsilon\right)$-approximate solution of (\ref{eq:first}) in $\Omega$, when $f$ is a convex function.
\begin{theorem}
    Suppose that A1-A3, A5 and A6 hold, and let $T_f(\epsilon)$ be defined by \eqref{eq:3.2.7}. If $\Delta_{\mathrm{max}} \geq D_0$, then
    \begin{align}
        T_f(\epsilon) \; \leq \; \frac{4\beta}{\alpha\kappa}\left(\frac{L}{\sigma}\epsilon\right)^{-1} +\; \log_{2}\left(\frac{(8L+4M)\Delta_{0}}{(1-\alpha)\kappa}\left(\frac{L}{\sigma}\epsilon\right)^{-1}\right) + 2,
        \label{eq:3.14.1}
    \end{align}
    where $\beta$ is defined in \eqref{eq:3.2.4} and $\kappa$ is the constant in \eqref{sdcdf}.
    \label{thm:convex}
\end{theorem}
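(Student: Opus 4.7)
My plan is to mirror the strategy used in the nonconvex case (Theorem \ref{thm:3.1}): partition the iterations up to $T_f(\epsilon)-1$ into successful and unsuccessful ones, bound each set separately, and sum the bounds. The decomposition to exploit is the analogue of \eqref{eq:motivation},
\begin{equation*}
T_{f}(\epsilon) = \left|\mathcal{S}_{T_{f}(\epsilon)-1}\right| + \left|\mathcal{U}_{T_{f}(\epsilon)-1}^{(1)} \cup \mathcal{U}_{T_{f}(\epsilon)-1}^{(2)}\right|,
\end{equation*}
which is valid when $T_f(\epsilon)\geq 1$. The trivial cases $T_f(\epsilon)\in\{0,1\}$ can be dispatched immediately, since the right-hand side of \eqref{eq:3.14.1} is already at least $2$; so I focus on $T_f(\epsilon)\geq 2$, which is exactly the regime covered by Lemma \ref{lem:3.2.4}.

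First, I will bound the successful iterations using Lemma \ref{lem:3.2.4} directly, giving
\begin{equation*}
\left|\mathcal{S}_{T_f(\epsilon)-1}\right| \leq 1 + \frac{2\beta}{\alpha\kappa}\left(\frac{L}{\sigma}\epsilon\right)^{-1}.
\end{equation*}
Next, for the unsuccessful iterations, I want to invoke Lemma \ref{lem:3.5} with $T = T_f(\epsilon)$. This requires checking two things: (i) $T\in\{1,\ldots,T_g(\epsilon)\}$, and (ii) the hypothesis $\psi_{\Delta_{\mathrm{max}}}(x_k) > (L/\sigma)\epsilon$ used implicitly through Lemma \ref{lem:3.3}. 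For (i), Lemma \ref{lem:tf} yields $T_f(\epsilon) \leq T_g(\epsilon)$ as soon as $\Delta_{\mathrm{max}}\geq D_0$. For (ii), by the definition of $T_f(\epsilon)$, for every $k\leq T_f(\epsilon)-1$ we have $f(x_k)-f(x^*) > \Delta_{\mathrm{max}}(L/\sigma)\epsilon$, and then Lemma \ref{lem:3.2.1} combined with $\Delta_{\mathrm{max}}\geq D_0$ produces
\begin{equation*}
\psi_{\Delta_{\mathrm{max}}}(x_k) \geq \frac{f(x_k)-f(x^*)}{\Delta_{\mathrm{max}}} > \left(\frac{L}{\sigma}\epsilon\right),
\end{equation*}
so the premise of Lemma \ref{lem:3.3} holds, and Lemma \ref{lem:3.5} applies.

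Applying Lemma \ref{lem:3.5} then gives
\begin{equation*}
\left|\mathcal{U}_{T_f(\epsilon)-1}^{(1)} \cup \mathcal{U}_{T_f(\epsilon)-1}^{(2)}\right| \leq \log_{2}\!\left(\frac{(8L+4M)\Delta_{0}}{(1-\alpha)\kappa}\left(\frac{L}{\sigma}\epsilon\right)^{-1}\right) + \left|\mathcal{S}_{T_f(\epsilon)-1}\right|.
\end{equation*}
Plugging both bounds into the decomposition yields $T_f(\epsilon) \leq 2\left|\mathcal{S}_{T_f(\epsilon)-1}\right| + \log_{2}(\cdot)$, and substituting the Lemma \ref{lem:3.2.4} bound produces exactly \eqref{eq:3.14.1}, with the constant $2$ arising from $2\cdot 1$ plus nothing else.

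The proof is essentially bookkeeping; the only subtle point, and in my view the main obstacle, is justifying the use of Lemma \ref{lem:3.5} in this convex setting, since that lemma was phrased in terms of $T_g(\epsilon)$ rather than $T_f(\epsilon)$. This is precisely what Lemma \ref{lem:tf} was proved for, so the argument reduces to invoking the two pre-established lemmas in the correct order and handling the trivial $T_f(\epsilon)\leq 1$ case at the start.
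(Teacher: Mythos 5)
Your proposal is correct and follows essentially the same route as the paper: the decomposition into successful and unsuccessful iterations, Lemma \ref{lem:3.2.4} for $\left|\mathcal{S}_{T_f(\epsilon)-1}\right|$, and Lemma \ref{lem:tf} to justify applying Lemma \ref{lem:3.5} with $T=T_f(\epsilon)$. Your extra verification that $\psi_{\Delta_{\mathrm{max}}}(x_k)>\left(\tfrac{L}{\sigma}\epsilon\right)$ for $k\leq T_f(\epsilon)-1$ is harmless but redundant, since $T_f(\epsilon)\leq T_g(\epsilon)$ already guarantees the hypothesis needed inside Lemma \ref{lem:3.5}.
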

\begin{proof}
    If $T_f(\epsilon) \leq 1$, then \eqref{eq:3.14.1} is true. Let us assume that $T_f(\epsilon) \geq 2$. Similarly as in \eqref{eq:motivation}, we have
    \begin{equation}
        T_{f}(\epsilon) = \left|\mathcal{S}_{T_{f}(\epsilon)-1}\right| + \left|\mathcal{U}_{T_{f}(\epsilon)-1}^{(1)} \cup \mathcal{U}_{T_{f}(\epsilon)-1}^{(2)}\right|.\label{eq:3.14.2}
    \end{equation}
    By Lemma \ref{lem:tf}, we have $T_f(\epsilon) \leq T_g(\epsilon)$. Thus, by considering $T=T_f(\epsilon)$ in Lemma \ref{lem:3.5}, it follows that
\begin{equation}
\left|\mathcal{U}_{T_f(\epsilon)-1}^{(1)} \cup \mathcal{U}_{T_f(\epsilon)-1}^{(2)}\right| \leq \log_{2}\left(\frac{(8L+4M)\Delta_{0}}{(1-\alpha)\kappa}\left(\frac{L}{\sigma}\epsilon\right)^{-1}\right) + \left|\mathcal{S}_{T_f(\epsilon)-1}\right|.\label{eq:3.14.3}
\end{equation}
Then, by combining \eqref{eq:3.14.2}, Lemma \ref{lem:3.2.4} and \eqref{eq:3.14.3}, we conclude that \eqref{eq:3.14.1} is true.
\end{proof}

\noindent Since each iteration of TRFD-S requires at most $(n+1)$ function evaluations, from Theorem \ref{thm:convex} we obtain the following upper bound on the number of function evaluations required by TRFD-S to find a $\Delta_{\mathrm{max}}\left(\frac{L}{\sigma}\epsilon\right)$-approximate solution of (\ref{eq:first}) in $\Omega$, when $f$ is a convex function.
\vspace{0.2cm}
\begin{corollary}\label{coro:3.15}
    Suppose that A1-A3, A5 and A6 hold, and let $FE_{T_f(\epsilon)}$ be the number of function evaluations executed by TRFD-S up to the $(T_f(\epsilon)-1)$-$st$ iteration. If $\Delta_{\mathrm{max}} \geq D_0$, then
    \begin{eqnarray}\label{cor:3.19}
FE_{T_f(\epsilon)} \; \leq \; (n+1)\left[\frac{4\beta}{\alpha\kappa}\left(\frac{L}{\sigma}\epsilon\right)^{-1} +\; \log_{2}\left(\frac{(8L+4M)\Delta_{0}}{(1-\alpha)\kappa}\left(\frac{L}{\sigma}\epsilon\right)^{-1}\right)+2\right].
\end{eqnarray}
\end{corollary}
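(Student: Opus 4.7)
The proof plan is essentially the same bookkeeping argument that gave Corollary \ref{cor:3.1} in the nonconvex case, now applied on top of the iteration bound of Theorem \ref{thm:convex} rather than Theorem \ref{thm:3.1}. So the main idea is: bound the per-iteration function-evaluation cost by a constant depending only on $n$, then multiply by the iteration complexity already established.

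First, I would argue that each iteration of TRFD-S consumes at most $(n+1)$ evaluations of $f$. Indeed, inspecting the algorithm, an iteration can incur three types of calls: (i) Step~1 rebuilds the finite-difference gradient $g_k$, which costs $n$ evaluations $f(x_k+\tau_k e_i)$, $i=1,\dots,n$ (the value $f(x_k)$ is inherited from the previous iteration, and is available at $k=0$ from initialization); (ii) Step~3 evaluates $f(x_k+d_k)$ to form $\rho_k$, costing $1$ evaluation; and (iii) on an unsuccessful type~I iteration Step~1 is skipped entirely (since $g_{k+1}=g_k$), so only the trial evaluation is needed. Summing over the three cases, the worst-case count per iteration is $n+1$, giving the uniform bound $FE_{T_f(\epsilon)}\le (n+1)\,T_f(\epsilon)$.

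Next, I would invoke Theorem \ref{thm:convex}, which under the assumed hypotheses (A1--A3, A5, A6 together with $\Delta_{\max}\ge D_0$) provides the iteration bound
\begin{equation*}
T_f(\epsilon)\;\le\;\frac{4\beta}{\alpha\kappa}\left(\frac{L}{\sigma}\epsilon\right)^{-1}+\log_2\!\left(\frac{(8L+4M)\Delta_0}{(1-\alpha)\kappa}\left(\frac{L}{\sigma}\epsilon\right)^{-1}\right)+2.
\end{equation*}
Substituting this into the per-iteration cost bound yields exactly \eqref{cor:3.19}.

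The argument is almost entirely mechanical; the only place where one needs to be slightly careful is the per-iteration accounting, specifically noticing that Step~4 preserves $g_{k+1}=g_k$ on type~I unsuccessful iterations so that those cheap iterations do not inflate the constant, and that $f(x_k)$ is always cached from the previous iteration. Since Theorem \ref{thm:convex} has already absorbed all the convex-analysis work (through $\beta$ and Lemmas \ref{lem:3.2.1}--\ref{lem:tf}), no further technical difficulty remains.
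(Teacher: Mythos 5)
Your proposal is correct and matches the paper's own (implicit) argument exactly: the paper justifies Corollary \ref{coro:3.15} by the single observation that each iteration of TRFD-S requires at most $(n+1)$ function evaluations, and then multiplies the iteration bound of Theorem \ref{thm:convex} by $(n+1)$. Your more detailed per-iteration accounting is a harmless elaboration of the same step.
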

\noindent In view of \eqref{cor:3.19} and the definition of $\beta$ in \eqref{eq:3.2.4}, TRFD-S needs no more than $$\mathcal{O}\left(n\left(\frac{\sigma}{L}\right)L\Delta_{\mathrm{max}}\epsilon^{-1}\right)$$ function evaluations to find $x_k\in \Omega$ such that $f(x_k)-f(x^*) \leq \Delta_{\mathrm{max}}\left(\frac{L}{\sigma}\epsilon\right).$ Thus, given $\epsilon_f > 0$, if we use TRFD-S with $\epsilon = \epsilon_f/\Delta_{\mathrm{max}}$, then it will need no more than
\begin{equation}\label{cor_c}
    \mathcal{O}\left(n\left(\frac{\sigma}{L}\right)L\Delta_{\mathrm{max}}^2\epsilon_f^{-1}\right)
\end{equation}
function evaluations to find $x_k\in \Omega$ such that $f(x_k)-f(x^*) \leq \frac{L}{\sigma}\epsilon_f.$ So, in the case where $\sigma=L$, we get a worst-case evaluation complexity of
\begin{equation}\label{sigma_L_convex}
    \mathcal{O}\left(n L\Delta_{\mathrm{max}}^2\epsilon_f^{-1}\right)
\end{equation}
to satisfy $f(x_{k}) - f(x^*) \leq \epsilon_f$. Otherwise, when $\sigma \neq L$, Table \ref{tab:2} gives the different scenarios depending on the value of $\sigma$.
\begin{table}[h]
\centering
\begin{tabular}{|c|c|c|}
\hline
\textbf{Value of $\sigma$} & \textbf{Impact on \eqref{cor_c}} & \makecell{\textbf{Impact on the target} \\ \textbf{accuracy $\left(\frac{L}{\sigma}\epsilon_f\right)$}} \\ \hline
$\sigma<L$ & \makecell{\eqref{cor_c} lower than \eqref{sigma_L_convex} \\ by a linear factor $\left(\frac{\sigma}{L}\right)$} & \makecell{Accuracy weaker than $\epsilon_f$ \\ by a factor $\left(\tfrac{L}{\sigma}\right)$} \\ \hline
$\sigma>L$ & \makecell{\eqref{cor_c} larger than \eqref{sigma_L_convex} \\ by a linear factor $\left(\frac{\sigma}{L}\right)$} & \makecell{Accuracy stricter than $\epsilon_f$ \\ by a factor $\left(\frac{L}{\sigma}\right)$} \\ \hline
\end{tabular}
\caption{Impacts of the user-defined parameter $\sigma$ for convex problems}
\label{tab:2}
\end{table}
\subsection{Worst-Case Complexity Bound for P-L functions}

For the case where $f$ is a Polyak-Lojasiewicz (P-L) function \cite{polyak}, we will assume that the feasible set is unconstrained, i.e., $\Omega=\mathbb{R}^n$. Therefore, given $x_k\in \mathbb{R}^n$, the stationarity measure $\psi_{\Delta_{\mathrm{max}}}(x_k)$ reduces to $\|\nabla f(x_k)\|$, while the approximate stationarity measure $\eta_{\Delta_{\mathrm{max}}}(x_k)$ reduces to $\|g_k\|$.
\vspace{2mm}
\\
\noindent Now, let us consider the following assumption:
\vspace{2mm}
\begin{mdframed}
\textbf{A7.} $f$ is a P-L function, i.e., it has a global minimizer $x^* \in \mathbb{R}^n$ and
\begin{equation}\label{str_PL}
    \|\nabla f(x)\|^2 \geq 2\mu\left(f(x)-f(x^*)\right), \quad\forall x \in \mathbb{R}^n,
\end{equation}
\textcolor{black}{for some $\mu > 0$}.
\end{mdframed}
\vspace{2mm}
\noindent The following lemma relates the approximate stationarity measure $\|g_k\|$ with the functional residual.
\begin{lemma}\label{f_mu_g}
    Suppose that A2 and A7 hold, and assume that $\Omega=\mathbb{R}^n$. Moreover, let $x_k$ be generated by TRFD-S. If
    \begin{equation}\label{new_tol}
        f(x_k) - f(x^*) > \frac{1}{\mu}\left(\frac{L}{\sigma}\epsilon\right)^2,
    \end{equation}
    then
    \begin{equation}\label{sqrt_fc_g}
         \|g_k\|\geq \sqrt{\frac{\mu}{2}}(f(x_{k})-f(x^{*}))^{1/2}.
    \end{equation}
\end{lemma}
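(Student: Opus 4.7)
The plan is to combine three ingredients: the Polyak--{\L}ojasiewicz inequality from A7, which supplies a lower bound on $\|\nabla f(x_k)\|$ in terms of the functional residual; the finite-difference error bound from Lemma \ref{firstlem} together with the fact that the update rules of TRFD-S force $\tau_k \leq \tau_0 = \epsilon/(\sigma\sqrt{n})$; and the hypothesis \eqref{new_tol}, which is precisely what is needed to convert an error bound of size $(L/\sigma)\epsilon$ into one proportional to $(f(x_k)-f(x^*))^{1/2}$.

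First I would apply A7 to obtain $\|\nabla f(x_k)\| \geq \sqrt{2\mu}\,(f(x_k)-f(x^*))^{1/2}$. Next, Lemma \ref{firstlem} combined with \eqref{tau_k_tau_0} and the definition of $\tau_0$ yields
\[
\|\nabla f(x_k) - g_k\| \;\leq\; \tfrac{L}{2}\tau_k\sqrt{n} \;\leq\; \tfrac{L}{2}\tau_0\sqrt{n} \;=\; \tfrac{1}{2}\!\left(\tfrac{L}{\sigma}\epsilon\right).
\]
Then the hypothesis \eqref{new_tol}, rewritten as $(L/\sigma)\epsilon < \sqrt{\mu}\,(f(x_k)-f(x^*))^{1/2}$, upgrades this to $\|\nabla f(x_k) - g_k\| < \tfrac{\sqrt{\mu}}{2}(f(x_k)-f(x^*))^{1/2}$.

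Finally, I would invoke the reverse triangle inequality $\|g_k\| \geq \|\nabla f(x_k)\| - \|\nabla f(x_k) - g_k\|$ and combine the two bounds above to conclude
\[
\|g_k\| \;>\; \left(\sqrt{2\mu} - \tfrac{\sqrt{\mu}}{2}\right)(f(x_k)-f(x^*))^{1/2}.
\]
The only thing left is an elementary constant check: $\sqrt{2\mu} - \sqrt{\mu}/2 = \sqrt{\mu}(\sqrt{2} - 1/2) \geq \sqrt{\mu}/\sqrt{2} = \sqrt{\mu/2}$, which reduces to $\sqrt{2} - 1/2 \geq \sqrt{2}/2$, i.e.\ $\sqrt{2}/2 \geq 1/2$. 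This yields \eqref{sqrt_fc_g}. I do not foresee any real obstacle; the only subtle point is verifying that the constants align, i.e.\ that the PL lower bound on $\|\nabla f(x_k)\|$ dominates the finite-difference error by a factor that exceeds the target constant $\sqrt{\mu/2}$.
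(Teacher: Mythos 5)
Your proof is correct and rests on the same three ingredients as the paper's: the P--L lower bound on $\|\nabla f(x_k)\|$, the finite-difference error bound $\|\nabla f(x_k)-g_k\|\leq\frac{L}{2}\tau_0\sqrt{n}=\frac{1}{2}\left(\frac{L}{\sigma}\epsilon\right)$, and hypothesis \eqref{new_tol} to show the gradient dominates the error. The only cosmetic difference is that the paper routes the triangle-inequality step through Lemma \ref{eta_k_0} (obtaining $\|g_k\|>\frac{1}{2}\|\nabla f(x_k)\|$ and hence the constant $\sqrt{\mu/2}$ directly), whereas you inline that computation and arrive at the slightly sharper constant $\sqrt{\mu}\left(\sqrt{2}-\tfrac{1}{2}\right)$ before weakening it to $\sqrt{\mu/2}$.
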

\begin{proof}
By \eqref{new_tol} and A7, we have
    \begin{equation*}
        \left(\frac{L}{\sigma}\epsilon\right) < \sqrt{\mu}(f(x_{k})-f(x^{*}))^{1/2} < \sqrt{2\mu}(f(x_{k})-f(x^{*}))^{1/2} \leq \|\nabla f(x_k)\|.
    \end{equation*}
Then, by Lemma \ref{eta_k_0}, we have that \eqref{eta_psi} holds. Therefore, by combining \eqref{eta_psi} and \eqref{str_PL}, we conclude that \eqref{sqrt_fc_g} is true.    
\end{proof}
\noindent The next lemma provides an upper bound on $\frac{\left(f(x_k)-f(x^*)\right)^{1/2}}{\Delta_k}$.
\begin{lemma}\label{delta_f_gamma}
    Suppose that A2, A3 and A7 hold, and assume that $\Omega=\mathbb{R}^n$. Moreover, given $T \geq 1$, let $\left\{x_k\right\}_{k=0}^{T}$ and $\left\{\Delta_k\right\}_{k=0}^{T}$ be generated by TRFD-S. If
    \begin{equation*}
        f(x_k) - f(x^*) > \frac{1}{\mu}\left(\frac{L}{\sigma}\epsilon\right)^2, \quad \text{for} \quad k=0,...,T-1,
    \end{equation*}
    then
    \small
    \begin{equation}
    \left(\frac{1}{\Delta_{k}}\right)(f(x_{k})-f(x^{*}))^{1/2}\leq\max\left\{\left(\frac{1}{\Delta_{0}}\right)(f(x_{0})-f(x^{*}))^{1/2},\sqrt{\frac{2}{\mu}}\dfrac{(4L+2M)}{(1-\alpha)\kappa}\right\}\equiv\gamma,
    \label{eq:3.3.1}
    \end{equation}
    \normalsize
    for $k=0,\ldots,T$, where $\kappa$ is the constant in \eqref{sdcdf}.
\end{lemma}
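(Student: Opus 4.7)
The plan is to mirror the inductive argument of Lemma~\ref{lem:3.2.3}, replacing the role of the functional residual $f(x_k)-f(x^*)$ by its square root, and using the P-L--based lower bound on $\|g_k\|$ (Lemma~\ref{f_mu_g}) in place of the convex-case bound (Lemma~\ref{lem:3.2.2}). Since $\Omega=\mathbb{R}^n$, one should note at the outset that $\eta_{\Delta_{\mathrm{max}}}(x_k)$ collapses to $\|g_k\|$, which is what makes the P-L inequality directly usable.

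The induction base $k=0$ is immediate from the definition of $\gamma$. For the inductive step, assume \eqref{eq:3.3.1} holds at index $k$ and split into two cases according to $\rho_k$. If $\rho_k\geq\alpha$, then Step~3 gives $\Delta_{k+1}\geq\Delta_k$ and $f(x_{k+1})\leq f(x_k)$, so
\begin{equation*}
\frac{(f(x_{k+1})-f(x^*))^{1/2}}{\Delta_{k+1}}\leq \frac{(f(x_k)-f(x^*))^{1/2}}{\Delta_k}\leq \gamma,
\end{equation*}
using the induction hypothesis. If instead $\rho_k<\alpha$, Step~4 gives $\Delta_{k+1}=\tfrac12\Delta_k$ and $f(x_{k+1})=f(x_k)$, and the contrapositive of Lemma~\ref{lem:3.2} together with A3 forces
\begin{equation*}
\Delta_k > \frac{(1-\alpha)\kappa\,\|g_k\|}{2L+M},
\end{equation*}
i.e.\ $\|g_k\|/\Delta_k < (2L+M)/((1-\alpha)\kappa)$. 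Invoking Lemma~\ref{f_mu_g} (whose hypothesis \eqref{new_tol} is exactly the assumption placed on indices $0,\dots,T-1$) yields $\|g_k\|\geq \sqrt{\mu/2}\,(f(x_k)-f(x^*))^{1/2}$, and combining these two inequalities gives
\begin{equation*}
\frac{(f(x_k)-f(x^*))^{1/2}}{\Delta_k} < \sqrt{\tfrac{2}{\mu}}\,\frac{2L+M}{(1-\alpha)\kappa}.
\end{equation*}
Multiplying by $2$ to account for $\Delta_{k+1}=\tfrac12\Delta_k$ (with $f(x_{k+1})=f(x_k)$) produces exactly the second term in the definition of $\gamma$, closing the induction.

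There is no real obstacle here beyond bookkeeping: the key observation making the argument work is that halving $\Delta_k$ costs a factor $2$ on the left, which is precisely the numerical factor that turns $(2L+M)$ into the $(4L+2M)$ appearing in $\gamma$. The only subtlety worth double-checking is that Lemma~\ref{f_mu_g} applies at \emph{every} index $k\in\{0,\dots,T-1\}$ used in the induction; this is guaranteed by the blanket hypothesis $f(x_k)-f(x^*)>\tfrac{1}{\mu}(\tfrac{L}{\sigma}\epsilon)^2$ for these indices.
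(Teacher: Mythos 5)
Your proposal is correct and follows essentially the same route as the paper's proof: the same induction, the same case split on $\rho_k$, the same use of the contrapositive of Lemma~\ref{lem:3.2} (with $\eta_{\Delta_{\mathrm{max}}}(x_k)=\|g_k\|$ since $\Omega=\mathbb{R}^n$) to get $\|g_k\|/\Delta_k<(2L+M)/((1-\alpha)\kappa)$, and the same combination with Lemma~\ref{f_mu_g} and the factor $2$ from halving $\Delta_k$ to land on the second term of $\gamma$.
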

\begin{proof}
    Let us work through an induction argument. For $k=0$, \eqref{eq:3.3.1} clearly holds. Now, let us assume that \eqref{eq:3.3.1} is true for some $k \in \{0,...,T-1\}$. In the case where $\rho_k \geq \alpha$, similarly as in Lemma \ref{lem:3.2.3}, we get
    \begin{equation*}
        \left(\frac{1}{\Delta_{k+1}}\right)(f(x_{k+1})-f(x^{*}))^{1/2} \leq \left(\frac{1}{\Delta_{k}}\right)(f(x_{k})-f(x^{*}))^{1/2} \leq \gamma.
    \end{equation*}
    So, \eqref{eq:3.3.1} holds in this case. Let us now consider the case where $\rho_k < \alpha$. Similarly as in Lemma \ref{lem:3.2.3}, we have
    \begin{equation}\label{str_g}
        \frac{\|g_k\|}{\Delta_k} < \frac{2L+M}{(1-\alpha)\kappa}.
    \end{equation}
    Then, since $\Delta_{k+1}=\frac{1}{2}\Delta_k$ and $f(x_{k+1})=f(x_k)$, by combining Lemma \ref{f_mu_g} and \eqref{str_g}, we get
    \begin{eqnarray*}
        \left(\frac{1}{\Delta_{k+1}}\right)(f(x_{k+1})-f(x^{*}))^{1/2} &=& \left(\frac{2}{\Delta_{k}}\right)(f(x_{k})-f(x^{*}))^{1/2} \\
        &\leq & 2\sqrt{\frac{2}{\mu}}\frac{\|g_k\|}{\Delta_k} < \sqrt{\frac{2}{\mu}}\frac{4L+2M}{(1-\alpha)\kappa} \\
        &\leq& \gamma.
    \end{eqnarray*}
    So, \eqref{eq:3.3.1} is also true in this case, which concludes the proof.
\end{proof}
\vspace{2mm}
\noindent Now, let 
\begin{equation}
T_{PL}(\epsilon)=\inf\left\{k\in\mathbb{N}\,:\,f(x_{k})-f(x^{*})\leq\frac{1}{\mu}\left(\frac{L}{\sigma}\epsilon\right)^2\right\}
\label{eq:new_tol}
\end{equation}
be the first iteration index reaching a $\frac{1}{\mu}\left(\frac{L}{\sigma}\epsilon\right)^2$-approximate solution of (\ref{eq:first}) in $\mathbb{R}^n$, if it exists. Our goal is to establish a finite upper bound for $T_{PL}(\epsilon)$. In this context, the following lemma gives an upper bound for $\left|S_{T_{PL}(\epsilon)-1}\right|$.
\begin{lemma}\label{lem:str_S}
    Suppose that A2, A3 and A7 hold, and assume that $\Omega=\mathbb{R}^n$. If $T_{PL}(\epsilon) \geq 2$, then
    \begin{equation}\label{str_st}
        \left|S_{T_{PL}(\epsilon)-1}\right| \leq 1+ \frac{\log\left(\mu(f(x_0)-f(x^*))\left(\frac{L}{\sigma}\epsilon\right)^{-2}\right)}{\left|\log\left(1-\frac{\alpha\kappa(4L+2M)}{\gamma^2}\right)\right|},
    \end{equation}
    where $\gamma$ is defined in \eqref{eq:3.3.1} and $\kappa$ is the constant in \eqref{sdcdf}.
\end{lemma}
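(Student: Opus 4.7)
The plan is to establish a contraction inequality of the form $\delta_{k+1}\leq (1-\rho)\delta_k$, with $\delta_k := f(x_k)-f(x^*)$ and $\rho := \alpha\kappa(4L+2M)/\gamma^2$, on every iteration $k\in\mathcal{S}_{T_{PL}(\epsilon)-2}$, and then telescope and take logarithms.

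First I would fix $k\in\mathcal{S}_{T_{PL}(\epsilon)-2}$, so that by definition of $T_{PL}(\epsilon)$ the hypotheses of Lemmas \ref{f_mu_g} and \ref{delta_f_gamma} are available, giving $\|g_k\|\geq\sqrt{\mu/2}\,\delta_k^{1/2}$ and $\Delta_k\geq \delta_k^{1/2}/\gamma$. Combining this with $\rho_k\geq \alpha$, \eqref{ratio_generalset}, \eqref{sdcdf}, and A3, I would write
\begin{equation*}
\delta_k-\delta_{k+1}\;\geq\; \alpha\kappa\|g_k\|\min\!\left\{\Delta_k,\tfrac{\|g_k\|}{\|H_k\|}\right\}\;\geq\; \alpha\kappa\|g_k\|\min\!\left\{\tfrac{\delta_k^{1/2}}{\gamma},\tfrac{\|g_k\|}{M}\right\}.
\end{equation*}
A two-case analysis then yields the desired lower bound. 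If the minimum is $\delta_k^{1/2}/\gamma$, I use $\|g_k\|\geq\sqrt{\mu/2}\delta_k^{1/2}$ to get $\alpha\kappa\sqrt{\mu/2}\,\delta_k/\gamma$, and then invoke the lower bound on $\gamma$ from Lemma \ref{delta_f_gamma}, namely $\sqrt{\mu/2}\geq (4L+2M)/((1-\alpha)\kappa\gamma)$, to conclude $\delta_k-\delta_{k+1}\geq \alpha(4L+2M)\delta_k/((1-\alpha)\gamma^2)\geq \alpha\kappa(4L+2M)\delta_k/\gamma^2$. If the minimum is $\|g_k\|/M$, I use $\|g_k\|^2\geq (\mu/2)\delta_k$ to get $\alpha\kappa\mu\,\delta_k/(2M)$, and then the same lower bound on $\gamma$ (squared) together with $4L+2M\geq M$ ensures $\mu/(2M)\geq (4L+2M)/\gamma^2$.

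Next, for $k\notin\mathcal{S}_{T_{PL}(\epsilon)-2}$ I would note that $x_{k+1}=x_k$ gives $\delta_{k+1}=\delta_k$, while for $k\in\mathcal{S}_{T_{PL}(\epsilon)-2}$ the previous step gives $\delta_{k+1}\leq (1-\rho)\delta_k$. Telescoping from $k=0$ to $k=T_{PL}(\epsilon)-2$ then yields
\begin{equation*}
\delta_{T_{PL}(\epsilon)-1}\;\leq\;(1-\rho)^{|\mathcal{S}_{T_{PL}(\epsilon)-2}|}\,\delta_0.
\end{equation*}
By the definition of $T_{PL}(\epsilon)$ in \eqref{eq:new_tol}, the left-hand side exceeds $(1/\mu)(L\epsilon/\sigma)^2$, so rearranging and taking logarithms (noting $\rho\in(0,1)$ so $\log(1-\rho)<0$) I obtain
\begin{equation*}
|\mathcal{S}_{T_{PL}(\epsilon)-2}|\;<\;\frac{\log\!\left(\mu(f(x_0)-f(x^*))(L\epsilon/\sigma)^{-2}\right)}{|\log(1-\rho)|}.
\end{equation*}
Finally, since $|\mathcal{S}_{T_{PL}(\epsilon)-1}|\leq |\mathcal{S}_{T_{PL}(\epsilon)-2}|+1$, this gives \eqref{str_st}.

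The main obstacle is Step 1, specifically verifying that both branches of the $\min$ in \eqref{sdcdf} produce the \emph{same} contraction constant $\alpha\kappa(4L+2M)/\gamma^2$. This requires carefully unpacking the definition of $\gamma$ in \eqref{eq:3.3.1} in two different ways (linearly in the first branch, quadratically in the second), and relies crucially on the inequalities $\alpha,\kappa\in(0,1)$ and $4L+2M\geq M$. Everything else is then routine telescoping and logarithms.
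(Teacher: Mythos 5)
Your proof is correct and follows essentially the same route as the paper's: the same contraction inequality $\delta_{k+1}\leq\left(1-\tfrac{\alpha\kappa(4L+2M)}{\gamma^{2}}\right)\delta_{k}$ on successful iterations obtained from \eqref{sdcdf} together with Lemmas \ref{f_mu_g} and \ref{delta_f_gamma}, followed by telescoping, the definition of $T_{PL}(\epsilon)$, and logarithms. The only cosmetic differences are that the paper avoids your two-case analysis by observing that $\tfrac{1}{\gamma}\leq\sqrt{\tfrac{\mu}{2}}\tfrac{1}{M}$, so the minimum is always attained at the $\Delta_k$-branch, and that it explicitly verifies $\tfrac{\alpha\kappa(4L+2M)}{\gamma^{2}}<1$ (via the definition of $\gamma$ and $\mu\leq L$), a point you assert but should justify before taking logarithms.
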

\begin{proof}
    Let $k\in\mathcal{S}_{T_{PL}(\epsilon)-2}$. By \eqref{ratio_generalset}, (\ref{sdcdf}), Lemmas \ref{f_mu_g}, \ref{delta_f_gamma} and A3, we have
    \begin{eqnarray}
    & &f(x_{k})-f(x_{k+1})\nonumber\\
    &\geq &\alpha\kappa\|g_k\| \min\left\{\Delta_k, \frac{\|g_k\|}{\|H_k\|}\right\} \nonumber\\
    &\geq &\alpha\kappa\sqrt{\dfrac{\mu}{2}}\left(f(x_{k})-f(x^{*})\right)^{1/2}\min\left\{\frac{\left(f(x_{k})-f(x^{*})\right)^{1/2}}{\gamma}, \sqrt{\dfrac{\mu}{2}}\dfrac{\left(f(x_{k})-f(x^{*})\right)^{1/2}}{M}\right\}\nonumber\\
    &= &\alpha\kappa\sqrt{\dfrac{\mu}{2}}\left(f(x_{k})-f(x^{*})\right)\min\left\{\frac{1}{\gamma}, \sqrt{\dfrac{\mu}{2}}\dfrac{1}{M}\right\}.
    \label{eq:dimanche}
    \end{eqnarray}
    By the definition of $\gamma$ in \eqref{eq:3.3.1}, we have $\gamma \geq \sqrt{\frac{2}{\mu}}\frac{4L+2M}{(1-\alpha)\kappa}$. So, since $\alpha \in (0,1)$ and $\kappa \in (0,1)$, we get
    \begin{equation}\label{gamma_prop_2}
        \sqrt{\frac{\mu}{2}} \geq \frac{4L+2M}{\gamma},
    \end{equation}
    which implies
    \begin{equation}\label{gamma_prop_1}
        \frac{1}{\gamma} \leq \sqrt{\frac{\mu}{2}}\frac{1}{M}.
    \end{equation}
    Therefore, by (\ref{eq:dimanche}), \eqref{gamma_prop_2} and \eqref{gamma_prop_1}, it follows that
    \begin{align}
        f(x_k) - f(x_{k+1}) &\geq \frac{\alpha\kappa(4L+2M)}{\gamma^2}\left(f(x_{k})-f(x^{*})\right). \label{str_gamma_sq}
    \end{align}
    Denoting $\delta_k = f(x_k) - f(x^*)$, \eqref{str_gamma_sq} becomes
    \begin{equation*}
        \delta_k - \delta_{k+1} \geq \frac{\alpha\kappa(4L+2M)}{\gamma^2}\delta_k,
    \end{equation*}
    which gives
    \begin{equation*}
        \delta_{k+1} \leq \left(1-\frac{\alpha\kappa(4L+2M)}{\gamma^2}\right)\delta_k,\quad\text{when}\,\,k\in\mathcal{S}_{T_{PL}(\epsilon)-2},
    \end{equation*}
    where $\frac{\alpha\kappa(4L+2M)}{\gamma^2}<1$ by the definition of $\gamma$ in \eqref{eq:3.3.1} and by $\mu \leq L$. Then, since $\delta_{k+1} = \delta_k$ when $k\notin \mathcal{S}_{T_{PL}(\epsilon)-2}$, we have
    \begin{align*}
    \frac{1}{\mu}\left(\frac{L}{\sigma}\epsilon\right)^2 &< f(x_{T_{PL}(\epsilon)-1}) - f(x^*) = \delta_{T_{PL}(\epsilon)-1} \leq \prod_{k\in \mathcal{S}_{T_{PL}(\epsilon)-2}}\left(1-\frac{\alpha\kappa(4L+2M)}{\gamma^2}\right) \delta_0\\ &= \left(1-\frac{\alpha\kappa(4L+2M)}{\gamma^2}\right)^{\left|\mathcal{S}_{T_{PL}(\epsilon)-2}\right|}(f(x_0)-f(x^*)),
    \end{align*}
    which is equivalent to
    \begin{equation*}
        \left(1-\frac{\alpha\kappa(4L+2M)}{\gamma^2}\right)^{\left|\mathcal{S}_{T_{PL}(\epsilon)-2}\right|} > \frac{1}{\mu(f(x_0)-f(x^*))}\left(\frac{L}{\sigma}\epsilon\right)^2. 
    \end{equation*}
    Then, taking the logarithm on both sides,
    \begin{equation*}
    \left|\mathcal{S}_{T_{PL}(\epsilon)-2}\right| \log\left(1-\frac{\alpha\kappa(4L+2M)}{\gamma^2}\right) > \log\left(\frac{1}{\mu(f(x_0)-f(x^*))}\left(\frac{L}{\sigma}\epsilon\right)^2\right).
    \end{equation*}
    So,
    \begin{equation*}
        \left|\mathcal{S}_{T_{PL}(\epsilon)-1}\right|\leq 1+\left|\mathcal{S}_{T_{PL}(\epsilon)-2}\right| < 1+ \frac{\log\left(\mu(f(x_0)-f(x^*))\left(\frac{L}{\sigma}\epsilon\right)^{-2}\right)}{\left|\log\left(1-\frac{\alpha\kappa(4L+2M)}{\gamma^2}\right)\right|},
    \end{equation*}
    which shows that \eqref{str_st} is true.
\end{proof}
\noindent The next lemma shows that $T_{PL}(\epsilon) \leq T_g(\epsilon)$.
\begin{lemma}\label{TPL_Tg}
    Suppose that A2 and A7 hold, and assume that $\Omega = \mathbb{R}^n$. Moreover, let $T_{PL}(\epsilon)$ and $T_g(\epsilon)$ be defined by \eqref{eq:new_tol} and \eqref{eq:hitting}, respectively. Then, $T_{PL}(\epsilon) \leq T_g(\epsilon)$.
\end{lemma}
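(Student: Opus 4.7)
The plan is to argue by contradiction, mirroring the pattern used in Lemma \ref{lem:tf}. Suppose $T_{PL}(\epsilon) > T_g(\epsilon)$. Since $\Omega = \mathbb{R}^n$, the stationarity measure reduces to $\psi_{\Delta_{\max}}(x_k) = \|\nabla f(x_k)\|$, so by the definition of $T_g(\epsilon)$ we have
\begin{equation*}
\|\nabla f(x_{T_g(\epsilon)})\| \leq \left(\frac{L}{\sigma}\epsilon\right).
\end{equation*}

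On the other hand, the assumption $T_{PL}(\epsilon) > T_g(\epsilon)$ means that the defining inequality of $T_{PL}$ has not yet been met at index $T_g(\epsilon)$, i.e.
\begin{equation*}
f(x_{T_g(\epsilon)}) - f(x^*) > \frac{1}{\mu}\left(\frac{L}{\sigma}\epsilon\right)^2.
\end{equation*}
Applying the P-L inequality \eqref{str_PL} from A7 at $x = x_{T_g(\epsilon)}$, then substituting the previous bound, yields
\begin{equation*}
\|\nabla f(x_{T_g(\epsilon)})\|^2 \;\geq\; 2\mu\bigl(f(x_{T_g(\epsilon)})-f(x^*)\bigr) \;>\; 2\left(\frac{L}{\sigma}\epsilon\right)^2,
\end{equation*}
so $\|\nabla f(x_{T_g(\epsilon)})\| > \sqrt{2}\left(\frac{L}{\sigma}\epsilon\right) > \left(\frac{L}{\sigma}\epsilon\right)$, which contradicts the first displayed inequality. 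Hence $T_{PL}(\epsilon) \leq T_g(\epsilon)$.

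There is no real obstacle here: the only nontrivial ingredient is the P-L inequality, and it is tailor-made to convert a residual lower bound into a gradient-norm lower bound. The one small thing to be careful about is that the definitions use strict versus non-strict inequalities ($T_g$ uses $\leq$ and the negation of the $T_{PL}$ condition is a strict $>$), but this asymmetry works in our favor and produces the strict contradiction $\sqrt{2}(L/\sigma)\epsilon > (L/\sigma)\epsilon$.
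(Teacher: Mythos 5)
Your proof is correct and follows essentially the same route as the paper's: contradiction, using the negation of the $T_{PL}$ condition at index $T_g(\epsilon)$ together with the P-L inequality to force $\|\nabla f(x_{T_g(\epsilon)})\| > \left(\frac{L}{\sigma}\epsilon\right)$, contradicting the definition of $T_g(\epsilon)$. The paper writes the same chain of inequalities in a single display (with $\sqrt{\mu} < \sqrt{2\mu}$ in place of your explicit factor $\sqrt{2}$), so there is no substantive difference.
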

\begin{proof}
    Suppose by contradiction that $T_{PL}(\epsilon) > T_g(\epsilon)$. Then, by \eqref{str_PL} and the definition of $T_g(\epsilon)$, we would have
    \small
    \begin{equation*}
        \left(\frac{L}{\sigma}\epsilon\right) < \sqrt{\mu}(f(x_{T_{g}(\epsilon)})-f(x^{*}))^{1/2} < \sqrt{2\mu}(f(x_{T_{g}(\epsilon)})-f(x^{*}))^{1/2} \leq \|\nabla f(x_{T_g(\epsilon)})\| \leq \left(\frac{L}{\sigma}\epsilon\right),
    \end{equation*}
    \normalsize
    leading to a contradiction. So, we conclude that $T_{PL}(\epsilon) \leq T_g(\epsilon)$.
\end{proof}
\noindent The next theorem gives an upper bound on the number of iterations required by TRFD-S to reach a $\frac{1}{\mu}\left(\frac{L}{\sigma}\epsilon\right)^2$-approximate solution of (\ref{eq:first}) in $\mathbb{R}^n$, when $f$ is a P-L function.
\begin{theorem}
    Suppose that A2, A3 and A7 hold, and assume that $\Omega=\mathbb{R}^n$. Moreover, let $T_{PL}(\epsilon)$ be defined by \eqref{eq:new_tol}. Then
    \small
    \begin{align}
        T_{PL}(\epsilon) \; \leq \; \frac{2\log\left(\mu(f(x_0)-f(x^*))\left(\frac{L}{\sigma}\epsilon\right)^{-2}\right)}{\left|\log\left(1-\frac{\alpha\kappa(4L+2M)}{\gamma^2}\right)\right|} +\; \log_{2}\left(\frac{(8L+4M)\Delta_{0}}{(1-\alpha)\kappa}\left(\frac{L}{\sigma}\epsilon\right)^{-1}\right) + 2,
        \label{eq:3.19.1}
    \end{align}
    \normalsize
    where $\gamma$ is defined in \eqref{eq:3.3.1} and $\kappa$ is the constant in \eqref{sdcdf}.
    \label{thm:str_convex}
\end{theorem}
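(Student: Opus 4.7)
The plan is to mirror the blueprint used in the proofs of Theorem~\ref{thm:3.1} and Theorem~\ref{thm:convex}: dispose of the trivial case $T_{PL}(\epsilon)\leq 1$ immediately (in which \eqref{eq:3.19.1} holds because of the ``$+2$''), and then, under $T_{PL}(\epsilon)\geq 2$, partition the total iteration count as
\begin{equation*}
T_{PL}(\epsilon) \;=\; \left|\mathcal{S}_{T_{PL}(\epsilon)-1}\right| \;+\; \left|\mathcal{U}^{(1)}_{T_{PL}(\epsilon)-1} \cup \mathcal{U}^{(2)}_{T_{PL}(\epsilon)-1}\right|,
\end{equation*}
exactly as in \eqref{eq:motivation} and \eqref{eq:3.14.2}, and bound the two terms separately before summing.

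For the successful iterations, I would invoke Lemma~\ref{lem:str_S} directly; it already delivers the estimate
\begin{equation*}
\left|\mathcal{S}_{T_{PL}(\epsilon)-1}\right| \;\leq\; 1 \;+\; \frac{\log\!\left(\mu(f(x_0)-f(x^*))\left(\tfrac{L}{\sigma}\epsilon\right)^{-2}\right)}{\left|\log\!\left(1-\tfrac{\alpha\kappa(4L+2M)}{\gamma^2}\right)\right|}.
\end{equation*}
For the unsuccessful iterations, the natural tool is Lemma~\ref{lem:3.5}, which bounds $|\mathcal{U}^{(1)}_{T-1} \cup \mathcal{U}^{(2)}_{T-1}|$ for any $T \in \{1,\ldots,T_g(\epsilon)\}$. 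To legitimately take $T=T_{PL}(\epsilon)$ here, I need $T_{PL}(\epsilon)\leq T_g(\epsilon)$, which is exactly the content of Lemma~\ref{TPL_Tg}. This yields
\begin{equation*}
\left|\mathcal{U}^{(1)}_{T_{PL}(\epsilon)-1} \cup \mathcal{U}^{(2)}_{T_{PL}(\epsilon)-1}\right| \;\leq\; \log_{2}\!\left(\frac{(8L+4M)\Delta_{0}}{(1-\alpha)\kappa}\left(\tfrac{L}{\sigma}\epsilon\right)^{-1}\right) \;+\; \left|\mathcal{S}_{T_{PL}(\epsilon)-1}\right|.
\end{equation*}

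Adding the two bounds and using the partition above, the factor of $2$ in front of the logarithmic term and the additive ``$+2$'' in \eqref{eq:3.19.1} arise naturally, because $\left|\mathcal{S}_{T_{PL}(\epsilon)-1}\right|$ is counted once in the partition and once inside the bound on the unsuccessful iterations, as in the proof of Theorem~\ref{thm:convex}. I do not expect any genuine obstacle: all the substantive analytic work (the P--L inequality turning $\|g_k\|$ into a multiple of $(f(x_k)-f(x^*))^{1/2}$, the lower bound \eqref{str_g} on $\Delta_k$ at unsuccessful steps, and the resulting geometric contraction $\delta_{k+1}\leq(1-\alpha\kappa(4L+2M)/\gamma^{2})\delta_{k}$) is already packaged in Lemmas~\ref{f_mu_g}, \ref{delta_f_gamma} and \ref{lem:str_S}. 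The only subtlety worth flagging is the application of Lemma~\ref{lem:3.5} at $T=T_{PL}(\epsilon)$, which is precisely what Lemma~\ref{TPL_Tg} is designed to license; the remainder is arithmetic.
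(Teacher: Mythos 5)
Your proposal is correct and follows essentially the same route as the paper's own proof: the trivial case $T_{PL}(\epsilon)\leq 1$, the partition into successful and unsuccessful iterations, Lemma~\ref{lem:str_S} for $\left|\mathcal{S}_{T_{PL}(\epsilon)-1}\right|$, and Lemma~\ref{TPL_Tg} to license applying Lemma~\ref{lem:3.5} with $T=T_{PL}(\epsilon)$. The accounting for the factor $2$ and the additive $+2$ is also exactly as in the paper.
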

\begin{proof}
    If $T_{PL}(\epsilon) \leq 1$, then \eqref{eq:3.19.1} is true. Let us assume that $T_{PL}(\epsilon) \geq 2$. As in \eqref{eq:motivation}, we have
    \begin{equation}
        T_{PL}(\epsilon) = \left|\mathcal{S}_{T_{PL}(\epsilon)-1}\right| + \left|\mathcal{U}_{T_{PL}(\epsilon)-1}^{(1)} \cup \mathcal{U}_{T_{PL}(\epsilon)-1}^{(2)}\right|.\label{eq:3.14.2_str}
    \end{equation}
    Moreover, by Lemma \ref{TPL_Tg} we have $T_{PL}(\epsilon) \leq T_g(\epsilon)$. Thus, by considering $T=T_{PL}(\epsilon)$ in Lemma \ref{lem:3.5}, it follows that
    \begin{equation}
\left|\mathcal{U}_{T_{PL}(\epsilon)-1}^{(1)} \cup \mathcal{U}_{T_{PL}(\epsilon)-1}^{(2)}\right| \leq \log_{2}\left(\frac{(8L+4M)\Delta_{0}}{(1-\alpha)\kappa}\left(\frac{L}{\sigma}\epsilon\right)^{-1}\right) + \left|\mathcal{S}_{T_{PL}(\epsilon)-1}\right|\label{eq:3.14.3_str}
\end{equation}
    holds. Then, combining \eqref{eq:3.14.2_str}, Lemma \ref{lem:str_S} and \eqref{eq:3.14.3_str}, we conclude that \eqref{eq:3.19.1} is true.
\end{proof}
\noindent Since each iteration of TRFD-S requires at most $(n+1)$ function evaluations, from Theorem \ref{thm:str_convex} we obtain the following upper bound on the number of function evaluations required by TRFD-S to find a $\frac{1}{\mu}\left(\frac{L}{\sigma}\epsilon\right)^2$-approximate solution of (\ref{eq:first}) in $\mathbb{R}^n$, when $f$ is a P-L function.
\begin{corollary}\label{cor:3.20}
    Suppose that A2, A3 and A7 hold, and assume that $\Omega=\mathbb{R}^n$. Moreover, let $FE_{T_{PL}(\epsilon)}$ be the number of function evaluations executed by TRFD-S up to the $(T_{PL}(\epsilon)-1)$-$st$ iteration. Then
    \small
    \begin{eqnarray}
FE_{T_{PL}(\epsilon)} &\leq &(n+1)\left[\frac{2\log\left(\mu(f(x_0)-f(x^*))\left(\frac{L}{\sigma}\epsilon\right)^{-2}\right)}{\left|\log\left(1-\frac{\alpha\kappa(4L+2M)}{\gamma^2}\right)\right|} +\; \log_{2}\left(\frac{(8L+4M)\Delta_{0}}{(1-\alpha)\kappa}\left(\frac{L}{\sigma}\epsilon\right)^{-1}\right)+2\right]\nonumber\\
& &
\label{cor:str}
\end{eqnarray}
\normalsize
\end{corollary}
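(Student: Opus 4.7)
The plan is to derive the function-evaluation bound directly from Theorem \ref{thm:str_convex} by multiplying through by the maximum number of $f$-evaluations performed in a single iteration of TRFD-S. This is the same pattern already used to pass from Theorem \ref{thm:3.1} to Corollary \ref{cor:3.1}, and from Theorem \ref{thm:convex} to Corollary \ref{coro:3.15}, so I would essentially mirror that argument.

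First I would establish the per-iteration evaluation cost. Step 1 of TRFD-S builds $g_k$ via forward finite differences, which requires $n$ evaluations of $f$ at the points $x_k + \tau_k e_i$ together with the value $f(x_k)$; the latter is always available from the preceding iteration (or from initialization), so Step 1 incurs at most $n$ new evaluations. Step 3 then requires one additional evaluation of $f$ at $x_k + d_k$ to form $\rho_k$. Unsuccessful iterations of type I reuse $g_k$ entirely and thus cost fewer evaluations, while unsuccessful iterations of type II rebuild $g_k$ with a halved stepsize and still respect the same bound. Altogether, every iteration contributes at most $n+1$ function evaluations.

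Second, I would invoke Theorem \ref{thm:str_convex} under A2, A3, A7 and $\Omega=\mathbb{R}^n$ to bound $T_{PL}(\epsilon)$ by the right-hand side of \eqref{eq:3.19.1}. Since $FE_{T_{PL}(\epsilon)}$ aggregates the function evaluations made across iterations $0,1,\ldots,T_{PL}(\epsilon)-1$, multiplying this iteration bound by $n+1$ yields exactly \eqref{cor:str}. There is no real obstacle: once the iteration bound is granted, the corollary is pure bookkeeping, and the only care needed is in the per-iteration accounting to ensure that the cached $f(x_k)$ is not double-counted, which is handled exactly as in the analogous previous corollaries.
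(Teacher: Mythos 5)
Your proposal is correct and matches the paper's own (implicit) argument exactly: the paper justifies Corollary \ref{cor:3.20} solely by noting that each iteration of TRFD-S costs at most $(n+1)$ evaluations of $f$ and then multiplying the iteration bound of Theorem \ref{thm:str_convex} by $(n+1)$, precisely as you do. Your additional care about not double-counting the cached value $f(x_k)$ is consistent with, and slightly more explicit than, the paper's bookkeeping.
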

\noindent In view of \eqref{cor:str} and the definition of $\gamma$ in \eqref{eq:3.3.1}, TRFD-S requires at most
\begin{equation*}
\mathcal{O}\left(n\frac{L}{\mu}\log\left(\left(\frac{\sigma}{L}\right)^2\mu(f(x_0)-f(x^*))\epsilon^{-2}\right)\right)
\end{equation*}
function evaluations to find $x_k\in \mathbb{R}^n$ such that $f(x_k)-f(x^*) \leq \frac{1}{\mu}\left(\frac{L}{\sigma}\epsilon\right)^2.$ Thus, given $\epsilon_f>0$, if $\epsilon = \sqrt{\left(\frac{\sigma}{L}\right)\mu\epsilon_f}$, then TRFD-S requires no more than
\begin{equation}\label{cor:res}
\mathcal{O}\left(n\frac{L}{\mu}\log\left(\left(\frac{\sigma}{L}\right)(f(x_0)-f(x^*))\epsilon_f^{-1}\right)\right)
\end{equation}
function evaluations to find $x_k\in \mathbb{R}^n$ such that $f(x_k)-f(x^*)\leq \frac{L}{\sigma}\epsilon_f$. Therefore, when the user-defined parameter $\sigma$ equals $L$, we obtain a worst-case evaluation complexity of
\begin{equation}\label{sigma_L_str_convex}
    \mathcal{O}\left(n\frac{L}{\mu} \log\left(\left(f(x_0)-f(x^*)\right)\epsilon_f^{-1}\right)\right)
\end{equation}
to satisfy $f(x_{k}) - f(x^*) \leq \epsilon_f$. Otherwise, when $\sigma \neq L$, Table \ref{tab:3} summarizes the different impacts of $\sigma$.
\begin{table}[h]
\centering
\begin{tabular}{|c|c|c|}
\hline
\textbf{Value of $\sigma$} & \textbf{Impact on \eqref{cor:res}} & \makecell{\textbf{Impact on the target} \\ \textbf{accuracy $\left(\frac{L}{\sigma}\epsilon_f\right)$}} \\ \hline
$\sigma<L$ & \makecell{\eqref{cor:res} lower than \eqref{sigma_L_str_convex} \\ by an additive term $n\frac{L}{\mu}\log\left(\frac{\sigma}{L}\right)$} & \makecell{Accuracy weaker than $\epsilon_f$ \\ by a factor $\left(\frac{L}{\sigma}\right)$} \\ \hline
$\sigma>L$ & \makecell{\eqref{cor:res} larger than \eqref{sigma_L_str_convex} \\ by an additive term $n\frac{L}{\mu}\log\left(\frac{\sigma}{L}\right)$} & \makecell{Accuracy stricter than $\epsilon_f$ \\ by a factor $\left(\frac{L}{\sigma}\right)$} \\ \hline
\end{tabular}
\caption{Impacts of the user-defined parameter $\sigma$ for \textcolor{black}{P-L functions}}
\label{tab:3}
\end{table}
\subsection{Trust-Region Method for Unrelaxable Bound Constraints}\label{sec:unrelaxable}
In this subsection, we propose an adaptation of TRFD-S for unrelaxable bound constraints problems, i.e., the case where $\Omega = [\ell,u]$ with $\ell,u \in \mathbb{R}^n$ being lower and upper bounds on the variables, respectively, and where $f$ cannot be evaluated outside $\Omega$. \textcolor{black}{Such scenarios typically appear in parameter tuning}, where the parameters have a particular range of values for intrinsic reasons \cite{alarie}.

As it is, Step 1 of TRFD-S may require evaluating $f$ at points outside $\Omega$. For problems where this is not feasible, we modify the step as follows. For each component $i \in \{1, \dots, n\}$ of $g_k$, forward and backward finite-difference stepsizes, $\tau_{k,i}^F$ and $\tau_{k,i}^B$, are initially set to a default $\tau_k$. These stepsizes are reduced if necessary to ensure that $x_k + \tau_{k,i}^F e_i$ and $x_k - \tau_{k,i}^B e_i$ remain in $\Omega$:
\[
\tau_{k,i}^F = \min\{[u - x_k]_{i}, \tau_k\}, \quad \tau_{k,i}^B = \min\{[x_k - \ell]_{i}, \tau_k\}.
\]
Since one of these stepsizes might become too small—or even zero—we take, for each $i$, the larger of $\tau_{k,i}^F$ and $\tau_{k,i}^B$ as the effective stepsize to avoid numerical errors. Figure \ref{fig:box} illustrates this procedure in a two-dimensional box.
\begin{figure}[h!]
    \centering
    \includegraphics[width=0.50\linewidth]{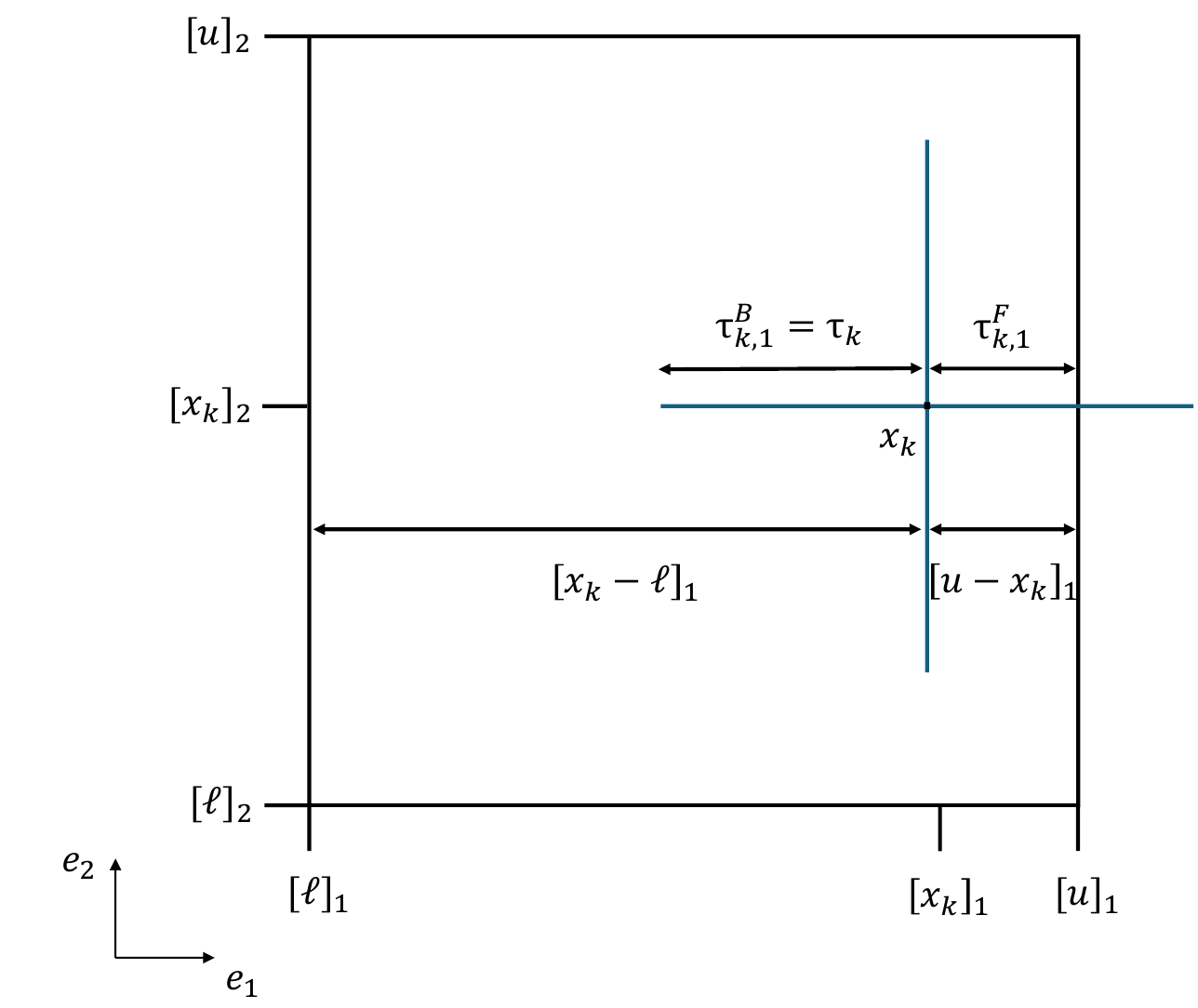}
    \caption{Illustration of Step 1 for unrelaxable bound constraints}
    \label{fig:box}
\end{figure}

Specifically, adapting TRFD-S (Algorithm 1) to handle unrelaxable bound constraints requires only a modification of Step 1; all other steps remain unchanged.

\begin{mdframed}
\noindent\textbf{Adaptation of Step 1 in TRFD-S} for unrelaxable bound constraints
\\[0.2cm]
\noindent\textbf{Step 1.} Let
\begin{equation*}
\tau_{k,i}^F = \min\{\left[u-x_k\right]_i,\tau_k\} \quad \text{and} \quad \tau_{k,i}^B = \min\{\left[x_k-\ell\right]_i,\tau_k\}, \quad i=1,...,n.   
\end{equation*}
Compute each component of $g_k \in \mathbb{R}^n$ as
\begin{gather*}
    \begin{aligned}
    \left[g_k\right]_i=
        \begin{cases}
            \frac{f(x_k+\tau_{k,i}^Fe_i)-f(x_k)}{\tau_{k,i}^F}, \quad \text{if} \, \tau_{k,i}^F \geq \tau_{k,i}^B, \\
            \frac{f(x_k)-f(x_k-\tau_{k,i}^Be_i)}{\tau_{k,i}^B}, \quad \text{otherwise}.
        \end{cases}
    \end{aligned}
\end{gather*}
Choose a nonzero symmetric matrix $H_k\in \mathbb{R}^{n\times n}$.
\end{mdframed}
\begin{remark}
By constructing the vector $g_k$ as above, we have that Lemma \ref{firstlem} remains true. Moreover, since the update rules of $\tau_k$ and $\Delta_k$ are unchanged, we have that $g_k$ still satisfies \eqref{fg_delta}. So, we conclude that the worst-case complexity bounds established in Corollaries \ref{cor:3.1} and \ref{coro:3.15} for general nonempty closed convex sets $\Omega$ are also true for TRFD-S with unrelaxable bound constraints.   
\end{remark}

\section{Numerical Experiments}\label{sec:4}
{\color{black}
To assess the numerical performance of TRFD-S, we conducted experiments by using MATLAB implementations. First, we considered unconstrained benchmark problems. Secondly, we considered unrelaxable bound constraints benchmark problems. Finally, we looked at the model fitting of a synthetic Predator-Pray dataset, in an unrelaxable bound constraints setting.

For unconstrained benchmark problems (see subsection \ref{sec:4.1}), we compared TRFD-S against NEWUOA \cite{powell,zhang_2023}, DFQRM \cite{grapiglia2} and an instance of TRFD \cite{grapiglia}, without and with additive noise. For unrelaxable bound constraints benchmark problems (see subsection \ref{sec:4.2}), we compared TRFD-S with BOBYQA \cite{bobyqa,zhang_2023}, NOMAD \cite{audet} and an instance of TRFD, without and with additive noise. Finally, for the model fitting problem (see subsection \ref{sec:app}), we compared TRFD-S with BOBYQA.
}

For each problem, a budget of 100 simplex gradients was allowed to each solver\footnote{One simplex gradient corresponds to $n+1$ function evaluations, with $n$ being the number of variables of the problem.}. In addition, our implementations of TRFD-S were equipped with the stopping criterion
\begin{equation*}
    \Delta_k \leq 10^{-13}.
\end{equation*}
All implementations were compared by using data profiles\footnote{The data profiles were generated using the code \texttt{data\_profile.m}, freely available at the website\\ \url{https://www.mcs.anl.gov/~more/dfo/}.} \cite{more2009benchmarking}, where a code $M$ is said to solve a problem with some \textit{Tolerance} when it reaches $x_M$ such that $$f(x_0)-f(x_{M}) \geq \left(1-\textit{\text{Tolerance}}\right)\left(f(x_0)-f(x_{Best})\right),$$ where $f(x_{Best})$ is the lowest function value found among all the methods, and \textit{Tolerance} $\in(0,1)$. All experiments were performed with MATLAB (R2023a) on a PC with microprocessor 13-th Gen Intel(R) Core(TM) i5-1345U 1.60 GHz and 32 GB of RAM memory.

\subsection{Unconstrained Benchmark Problems}\label{sec:4.1}
Here, we considered smooth unconstrained problems. We tested 134 functions $f:\mathbb{R}^n \to \mathbb{R}$ defined by the OPM collection \cite{gratton_opm}, for which $2\leq n \leq 110$, and where the initial points $x_0$ were provided by the collection.

\subsubsection{Performance of TRFD-S with other DFO methods}\label{perfo_unc}

The following codes were compared:
\\[0.15cm]
- \textbf{TRFD-S}: Implementation of TRFD-S, freely available on GitHub\footnote{\url{https://github.com/danadavar/TRFD-S}}, with initial parameters: $\epsilon=10^{-5}$, $\alpha=0.01$, $\Delta_0=\max\left\{1, \tau_0\sqrt{n}\right\}$, $\Delta_{\mathrm{max}}=\max\left\{1000, \Delta_0\right\}$ and $\sigma=\frac{\epsilon}{\sqrt{n}\sqrt{eps}}$, where $eps$ is the machine precision. \textcolor{black}{Such definition for $\sigma$ ensures that $\tau_0=\sqrt{eps}$, which is the stepsize widely used for finite differences \cite[Chapter 8]{NW}}. \textcolor{black}{Such setting ranges the values of $\sigma$ from 64 to 475 on the OPM collection.} The matrix $H_k$ is updated according to the BFGS rule\footnote{{\color{black}Preliminary numerical tests motivated this choice of update, as BFGS outperformed both the SR1 update and a scaled identity matrix using the Barzilai--Borwein step size.}}:
\begin{align*}
    H_{k+1} =
    \begin{cases}
        H_k + \frac{y_ky_k^T}{s_k^Ty_k} - \frac{H_ks_ks_k^TH_k}{s_k^TH_ks_k} \quad \text{if} \quad  \left|\langle s_k,y_k\rangle\right| > 0, \\
        H_k \quad \text{otherwise,}
    \end{cases}
\end{align*}
with $H_0 = I$, $s_k = x_{k+1}-x_k$ and $y_k = g_{k+1}-g_k$. The trust-region subproblem is solved via the method proposed in \cite{adachi}, which is implemented in the function \texttt{TRSgep.m} from the MANOPT toolbox, freely available on GitHub\footnote{\url{https://github.com/NicolasBoumal/manopt}}.
\\[0.15cm]
- \textbf{NEWUOA}: Implementation of Powell's method \cite{powell,zhang_2023}, freely available on GitHub\footnote{\url{https://github.com/libprima/prima}}. The initial parameters were not changed.
\\[0.15cm]
- \textbf{DFQRM}: Implementation of the quadratic regularization method described in Section 4 of \cite{grapiglia2}.
\\[0.15cm]
- \textbf{TRFD-2}: Implementation of TRFD \cite{grapiglia}, with $p=2$, $m=1$ and $h(z)=z,\,\forall z\in\mathbb{R}$, freely available on GitHub\footnote{\url{https://github.com/danadavar/TRFD}}. The threshold is set to $\alpha=0.01$, while the other parameters follow the same setup as Section 4 in \cite{grapiglia}.
\\[0.15cm]
Data profiles are presented in Figure \ref{fig:1}. As we can see, TRFD-S outperforms DFQRM and TRFD-2 for all presented tolerances, while exhibiting a competitive performance with NEWUOA. Notably, TRFD-S achieves better results than NEWUOA for tolerances $10^{-5}$ and $10^{-7}$.
\begin{figure}[h!]
    \centering
    {\includegraphics[width=0.33\textwidth]{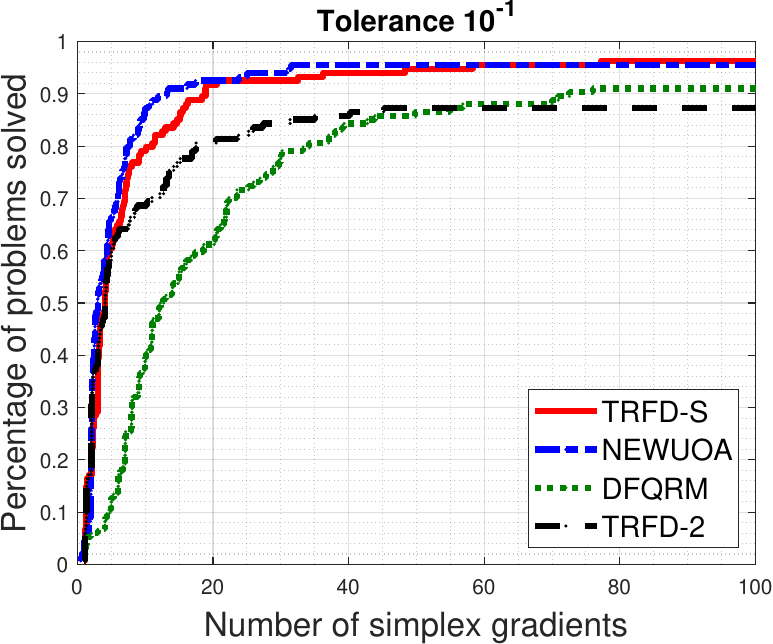}}
    \hspace{5mm}
    {\includegraphics[width=0.33\textwidth]{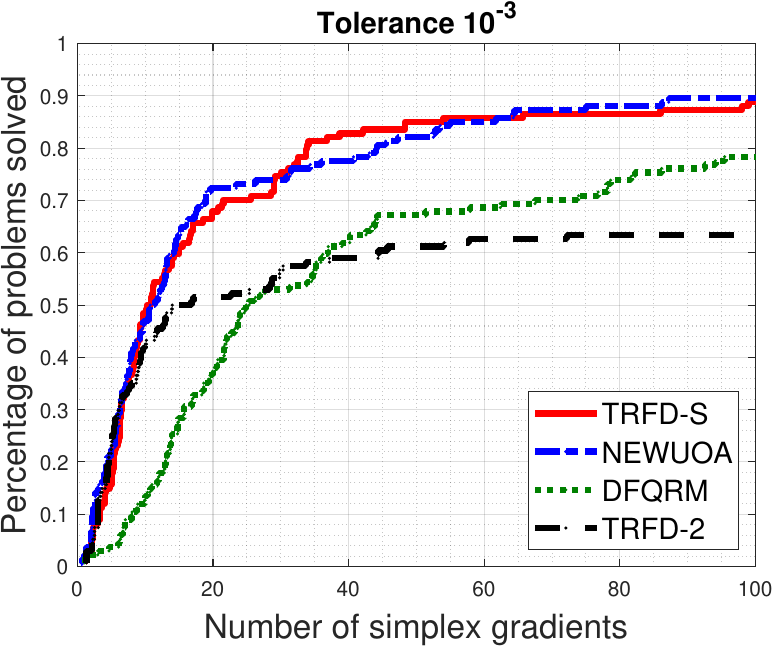}} 
    {\includegraphics[width=0.33\textwidth]{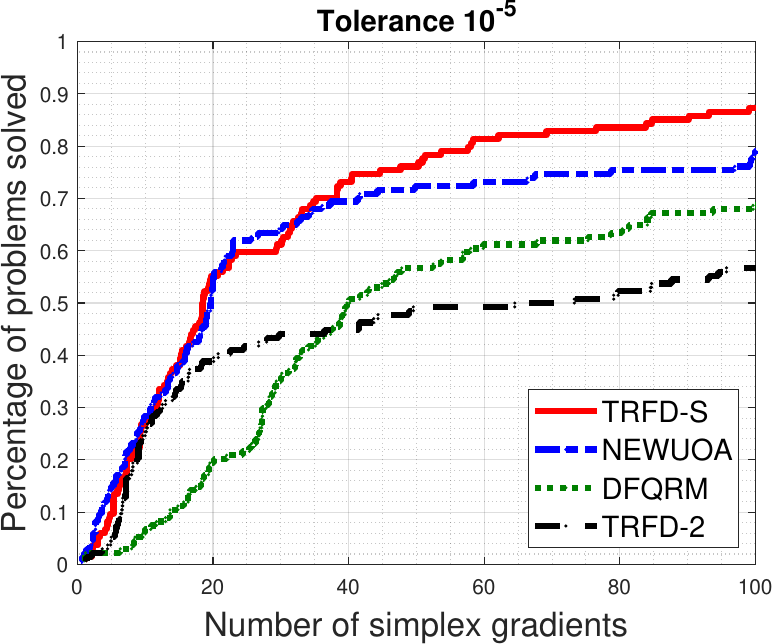}}
    \hspace{5mm}
    {\includegraphics[width=0.33\textwidth]{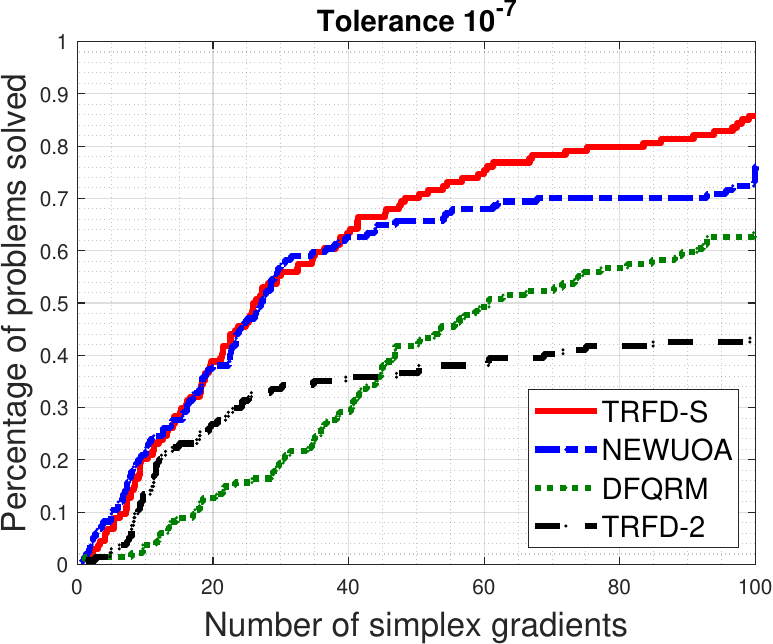}}
    \caption{Data profiles for smooth unconstrained problems.}
    \label{fig:1}
\end{figure}
\subsubsection{Performance of TRFD-S with other DFO methods with additive noise}\label{sec:5:noise}

Here, we considered additive noise by following the same setup as in \cite[Sections 2 and 4]{shi_nocedal}, i.e, the noise follows a uniform distribution with interval $[-\sqrt{3}, \sqrt{3}]$, and we have $s.d. \in \{10^{-1}, 10^{-3}, 10^{-5}, 10^{-7}\}$, where $s.d.$ is the standard deviation. Data profiles are shown on Figure \ref{fig:unc_noise}. As we can see, NEWUOA outperforms TRFD-S when $s.d.=10^{-1}$, while it shows a competitive behavior with TRFD-S on smaller levels of noise. \textcolor{black}{Qualitatively, this is similar to the results obtained in \cite{shi_nocedal} for a comparison between NEWUOA and another finite-difference based method.} 
\begin{figure}[h!]
    \centering
    {\includegraphics[width=0.33\textwidth]{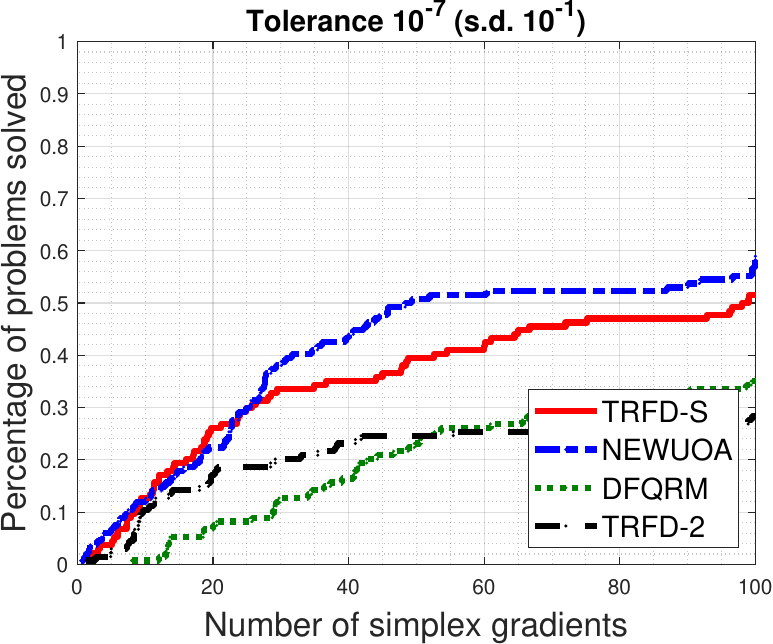}}
    \hspace{5mm}
    {\includegraphics[width=0.33\textwidth]{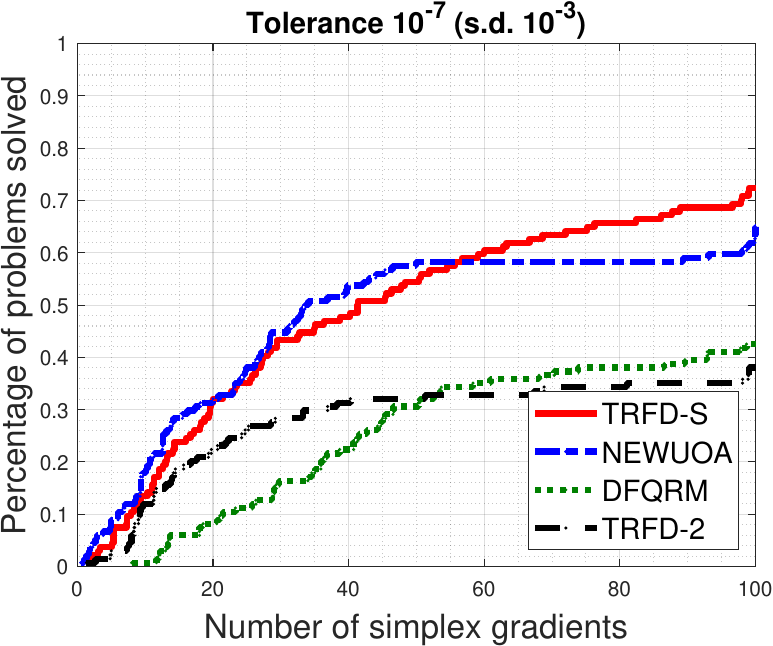}} 
    {\includegraphics[width=0.33\textwidth]{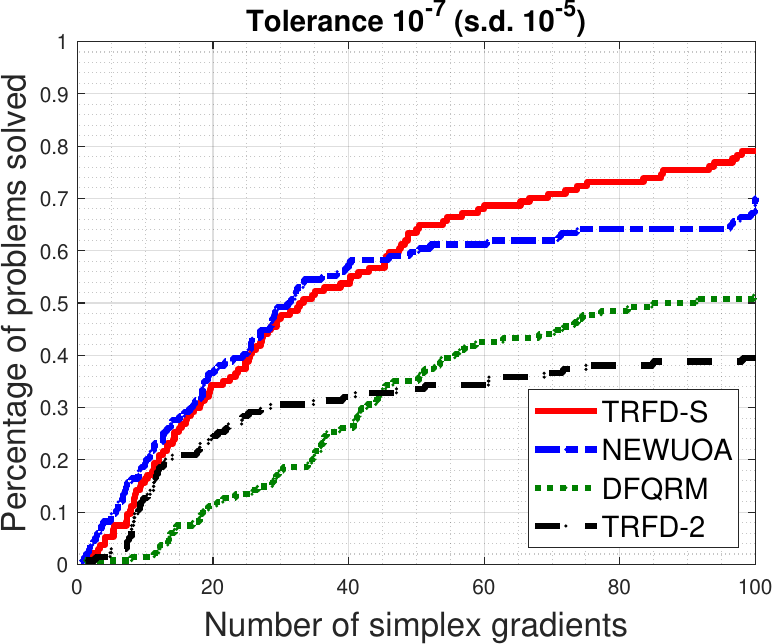}}
    \hspace{5mm}
    {\includegraphics[width=0.33\textwidth]{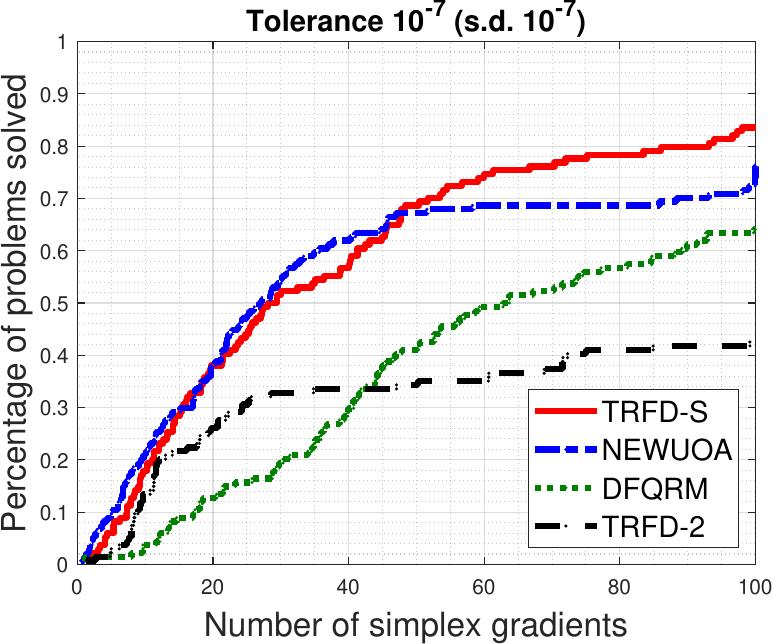}}
    \caption{Data profiles for smooth unconstrained problems with noise.}
    \label{fig:unc_noise}
\end{figure}
\subsection{Unrelaxable Bound Constraints Problems}\label{sec:4.2}
\noindent Here, we considered smooth problems of the form $$\min_{x\in \Omega} f(x) \equiv \left\|F(x)\right\|_{2}^2,$$ where $\Omega$ is defined by the unrelaxable bounds $\ell_i=0.1$ and $u_i=20$, for $i=1,...,n$, as set in Section 5 of \cite{hough}. We tested 53 functions $F:\mathbb{R}^n \to \mathbb{R}^m$ defined by the Moré-Wild collection \cite{more2009benchmarking}, for which $2\leq n \leq 12$ and $2 \leq m \leq 65$, and where the initial points $x_0$ were provided by the collection. In the case where $x_0$ violated the bound constraints, an orthogonal projection was applied to $\Omega$.

\subsubsection{Performance of TRFD-S with other DFO methods}\label{perf_dfo_bc} 

The following codes were compared:
\\[0.15cm]
- \textbf{TRFD-S}: Same setup as TRFD-S in subsection \ref{perfo_unc} but with the condition $\left\langle s_k,y_k\right\rangle > 0$ in the update of the Hessian approximation, {\color{black}which ensures to have a convex quadratic model. Such model is minimized over a convex set that is the intersection of a ball and a box}, by using the inner solver FISTA \cite{fista} supported by Dykstra's algorithm \cite{dykstra_1,dykstra_2}. The latter ensures that the points generated by FISTA $\left\{x_{k_{i}}\right\}_{i \geq 0}$ remain in the set $\mathcal{C}:=[\ell,u] \times \{s \in \mathbb{R}^n:\|s-x_{k_{i}}\|\leq \Delta_k\}$, leading to the iteration process $x_{k_{i+1}}=P_{\mathcal{C}}\left(x_{k_{i}}-\frac{1}{\tilde{L}}g_{k_{i}}\right)$, where $g_{k_{i}}$ is the gradient of $m_k$ at $x_{k_{i}}$, and $\tilde{L}=\|H_k\|$. The parameters were $100n^2$, $10^{-4}$ and $10^{-12}$ for the maximum number of iterations in FISTA, the stopping criterion in Dykstra's algorithm, and the stopping criterion for the distance between the iterates in FISTA, respectively\footnote{{\color{black}Preliminary numerical tests led to these choices of parameters.}}. 
\\[0.15cm]
- \textbf{BOBYQA}: Implementation of Powell's method \cite{bobyqa,zhang_2023}. The initial parameters were unchanged, while the option \texttt{honour\_x0} was set to \textit{true} to force the method not to move the position of the initial point.
\\[0.15cm]
- \textbf{NOMAD}: Implementation of the version 4 of NOMAD, proposed in \cite{audet}. The initial parameters were not changed.
\\[0.15cm]
- \textbf{TRFD-Inf}: Implementation of TRFD \cite{grapiglia}, freely available on GitHub\footnote{\url{https://github.com/danadavar/TRFD}}, with $p=+\infty$, $m=1$ and $h(z)=z,\,\forall z\in\mathbb{R}$. The threshold is the same as in subsection \ref{sec:4.1}, namely, $\alpha=0.01$, while the other parameters follow the same setup as in Section 4 of \cite{grapiglia}.
\\[0.15cm]
Data profiles are shown in Figure \ref{fig:4}. As shown, TRFD-S outperforms NOMAD and TRFD-Inf, while exhibiting a performance comparable to BOBYQA. In particular, TRFD-S achieves better results than BOBYQA when the tolerance is set to $10^{-7}$.
\begin{figure}[h!]
    \centering
    {\includegraphics[width=0.33\textwidth]{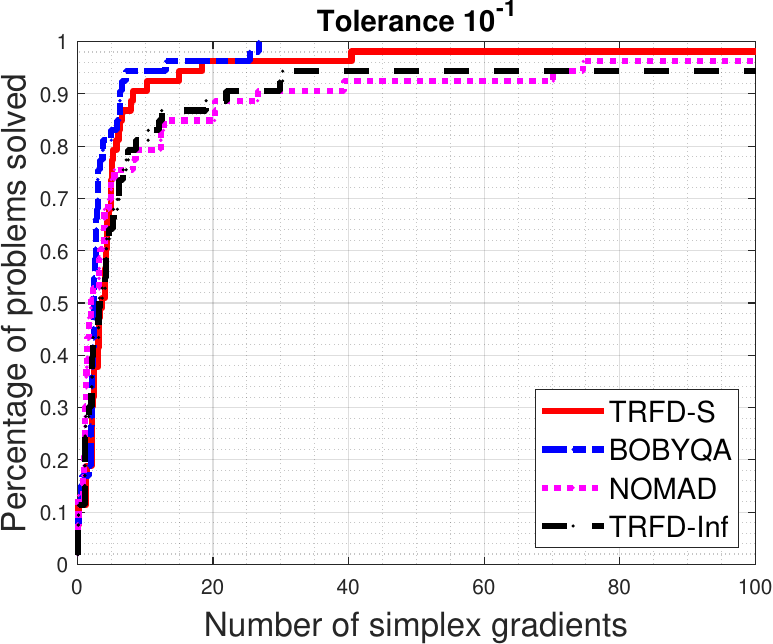}}
    \hspace{5mm}
    {\includegraphics[width=0.33\textwidth]{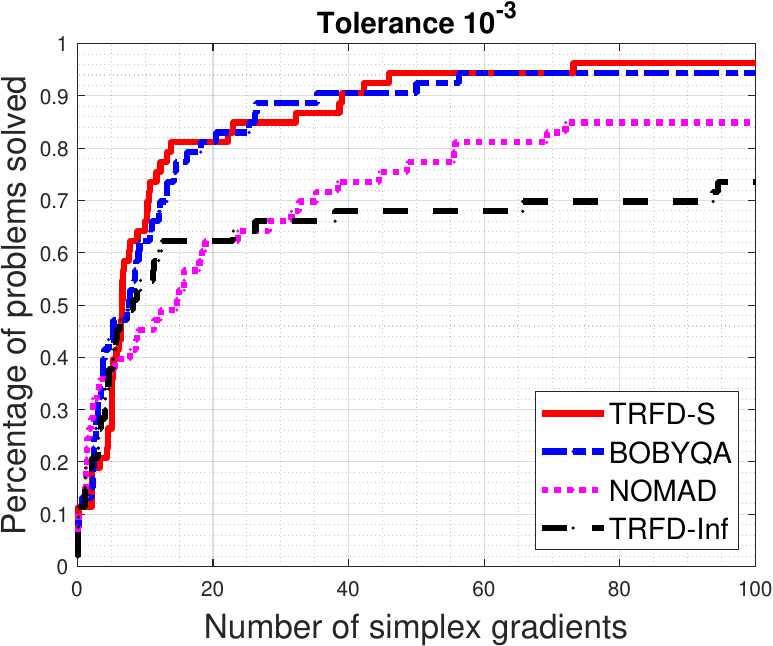}} 
    {\includegraphics[width=0.33\textwidth]{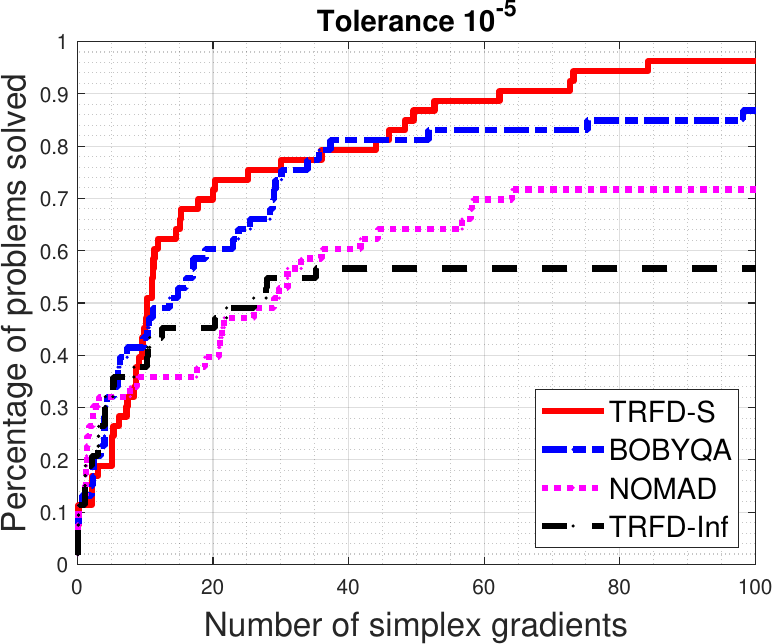}}
    \hspace{5mm}
    {\includegraphics[width=0.33\textwidth]{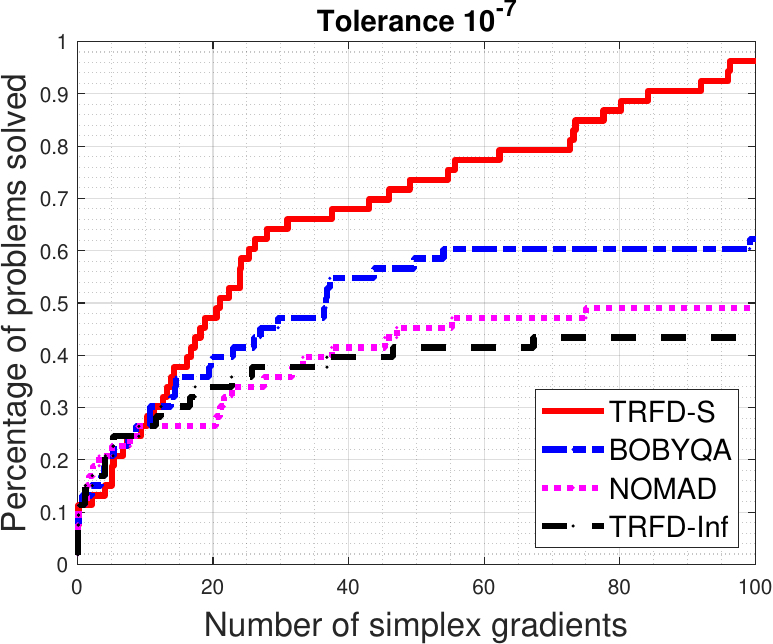}}
    \caption{Data profiles for smooth unrelaxable bound constraints problems.}
    \label{fig:4}
\end{figure}
{\color{black}
\subsubsection{Performance of TRFD-S with other DFO methods with additive noise}

Similarly as in subsection \ref{sec:5:noise}, additional uniform noise was considered with the interval $\left[-\sqrt{3}, \sqrt{3}\right]$ and standard deviation $s.d. \in \{10^{-1}, 10^{-3}, 10^{-5}, 10^{-7}\}$, following the same setup as in \cite[Sections 2 and 4]{shi_nocedal}. Data profiles are shown on Figure \ref{fig:bound_noise}. As we can see, TRFD-S \textcolor{black}{remarkably} outperforms BOBYQA, NOMAD and TRFD-Inf for all levels of noise, \textcolor{black}{with the gap decreasing as we increase the noise level}.
\begin{figure}[h!]
    \centering
    {\includegraphics[width=0.32\textwidth]{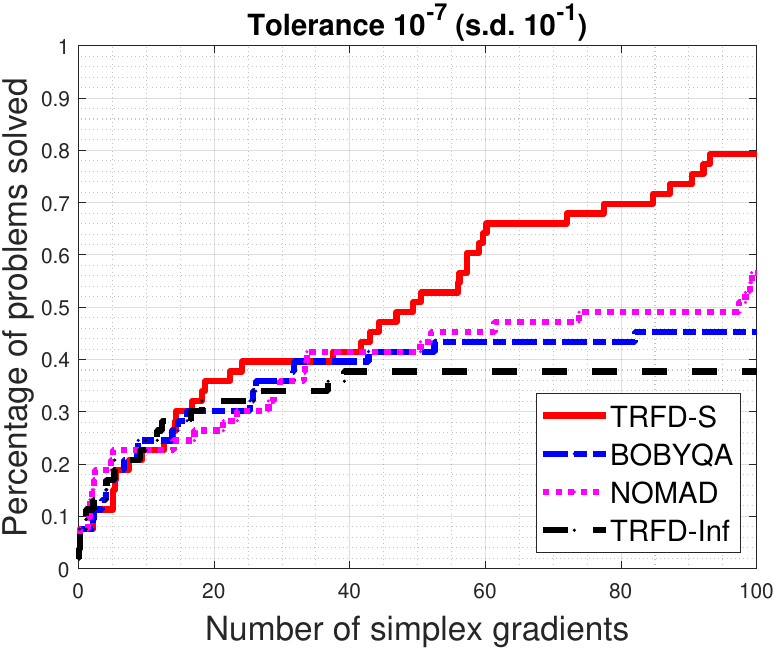}}
    \hspace{5mm}
    {\includegraphics[width=0.32\textwidth]{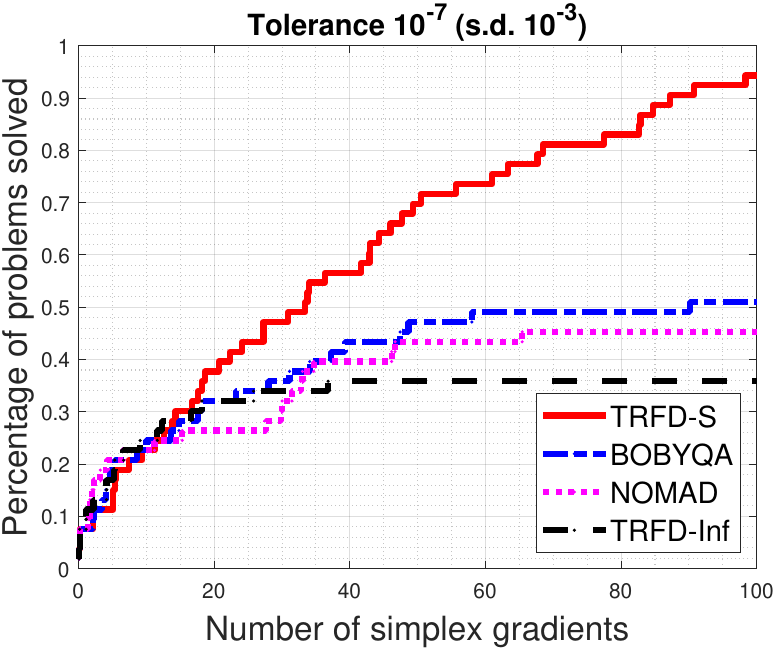}} 
    {\includegraphics[width=0.32\textwidth]{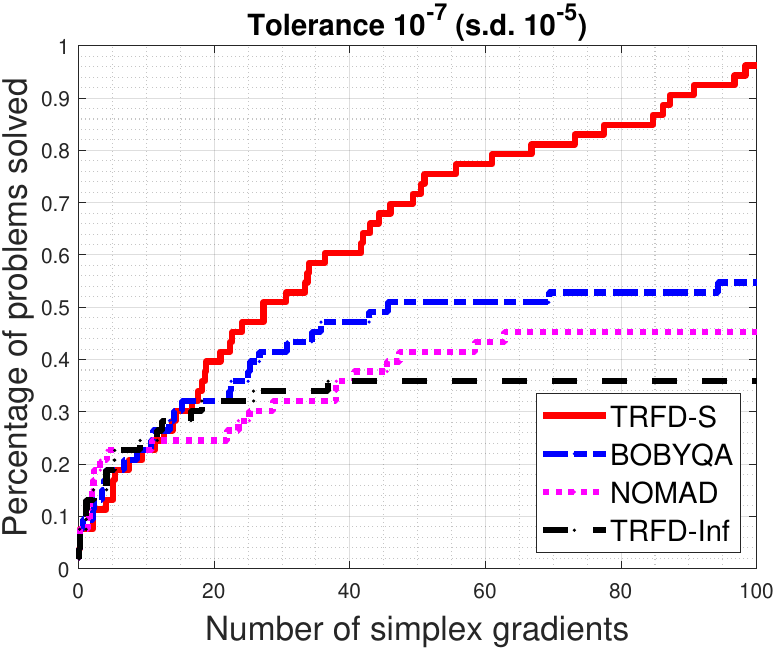}}
    \hspace{5mm}
    {\includegraphics[width=0.32\textwidth]{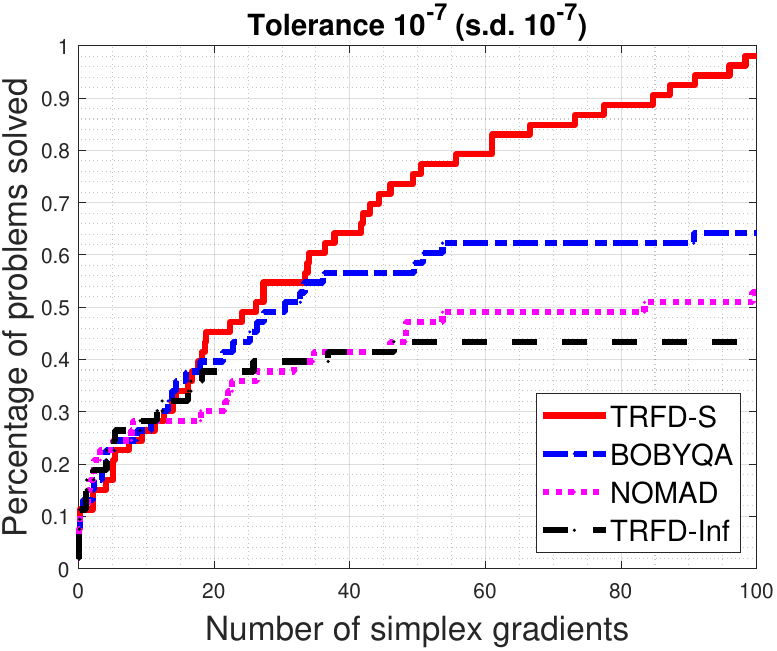}}
    \caption{Data profiles for smooth unrelaxable bound constraints problems with noise.}
    \label{fig:bound_noise}
\end{figure}
}
\subsection{Calibration of an ODE Model}\label{sec:app}

Parameter calibration in differential equation models plays a central role in describing physical, biological, and engineering processes. Although gradient-based optimization methods provide an efficient framework for this task, their practical use is often limited by the complexity of implementing adjoint equations or automatic differentiation techniques. Consequently, many practitioners still favor derivative-free methods for their ease of application (see, e.g., \cite{pumpkin,dengue,covid1,covid2,zika,china}). In this section, we compare the performance of TRFD-S and BOBYQA in calibrating the parameters of the Rosenzweig--MacArthur extension of the Lotka--Volterra Predator-Prey model \cite{rosenzweig,turchin}:
\begin{equation}\label{lotka}
\begin{aligned}
\frac{d\,Y(t)}{dt} &= \zeta\,Y(t)\left(1-\frac{Y(t)}{\theta}\right) - \lambda\,\frac{Y(t)\,Z(t)}{\mu+Y(t)},\\
\frac{d\,Z(t)}{dt} &= \nu\,\frac{Y(t)\,Z(t)}{\mu+Y(t)} - \xi\,Z(t),
\end{aligned}
\end{equation}
where $Y(t)$ and $Z(t)$ are the Preys and Predators densities, respectively. Using the initial conditions $Y(0)=400$ and $Z(0)=20$, we generated a synthetic dataset based on the solutions of the system \eqref{lotka}, which depend on the vector of parameters $x = [\zeta, \theta, \lambda, \mu, \nu, \xi]^T$. The solutions of the system \eqref{lotka} are denoted by $Y(t; x)$ and $Z(t;x)$.

We begin by selecting a ground-truth parameter vector $$x^* = [0.723, 447, 2.88, 21.9, 5.54, 4.99]^T.$$ Using this reference set of parameters, we numerically solved the system with the MATLAB function \texttt{ode45} \textcolor{black}{with default options} to obtain the solutions at a discrete set of time points $\{t_i=0.5i\}_{i=0}^{70}$, yielding the trajectories $\{Y(t_i; x^*)\}_{i=0}^{70}$ and $\{Z(t_i; x^*)\}_{i=0}^{70}$. To simulate observational noise, we perturbed the exact solutions with additive Gaussian noise. Specifically, we defined the synthetic observations as:
\[
\tilde{Y}_i = Y(t_i; x^*) + 10\varepsilon_i, \quad \tilde{Z}_i = Z(t_i; x^*) + 10\varepsilon_i, \quad \varepsilon_i \sim \mathcal{N}(0, 1),
\]
for $i = 0, \ldots, 70$. The resulting datasets $\{(t_i, \tilde{Y}_i)\}_{i=0}^{70}$ and $\{(t_i, \tilde{Z}_i)\}_{i=0}^{70}$ served as the testbed for calibrating $x$. Thus, given both datasets, by denoting $\bar{Y}$ and $\bar{Z}$ as the mean values of the Preys and Predators populations, respectively, we defined the least-square error as:
\begin{equation}\label{eq:obj_ls}
    f(x) \equiv \frac{1}{\bar{Y}^2}\sum_{i=0}^{70} \left(Y(t_i;x)-\tilde{Y}_i\right)^2 + \frac{1}{\bar{Z}^2}\sum_{i=0}^{70} \left(Z(t_i;x)-\tilde{Z}_i\right)^2.
\end{equation}
Given the objective function \eqref{eq:obj_ls}, we created a set of 171 problems defined by the initial point $$x_0 = i\,[0.1, 100, 1, 10, 1, 1]^T, \quad i=1,1.5,...,9,9.5,10$$ and the unrelaxable bounds $$\ell = [0.001, 0.001, \\ 0.001, 0.001, 0.001, 0.001]^T\quad\text{and}\quad u = x_0+j\,(x_0-\ell),$$ 
for $j=1,1.5,...,4,4.5,5$. We justify the choice of the vector $[0.1, 100, 1, 10, 1, 1]^T$ in $x_0$ by the order of magnitude that the parameters should have, for intrinsic biological reasons. Indeed, setting $x_0$ with components that are out of the right order of magnitude could lead to numerical issues with the ODE solver, independently of the method that is used.

For both methods, we used the same setup as in subsection \ref{perf_dfo_bc} and gave a budget of 100 simplex gradients. As we can see in Figure \ref{fig:ode}, \textcolor{black}{for this application} TRFD-S outperforms BOBYQA for all tolerances presented.
\vspace{-0.3cm}
\begin{figure}[h!]
    \centering
    {\includegraphics[width=0.32\textwidth]{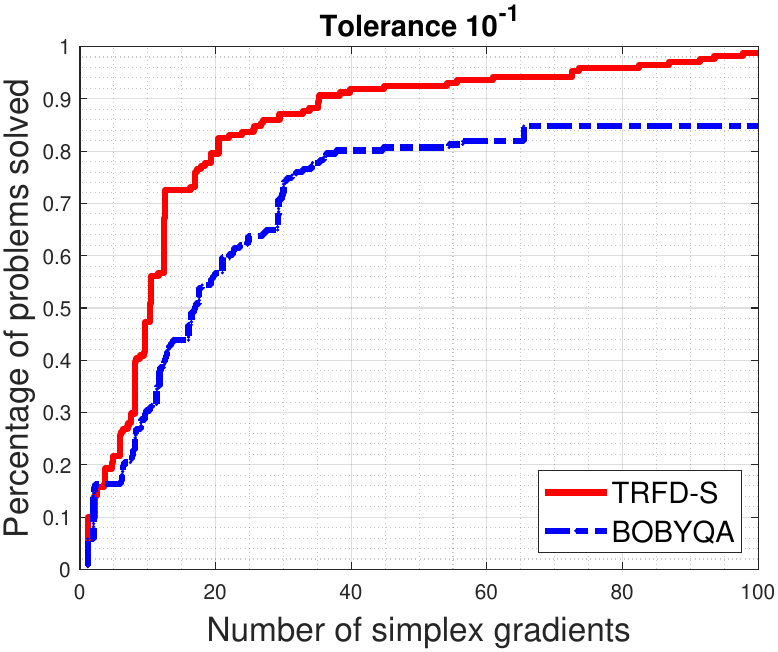}}
    \hspace{5mm}
    {\includegraphics[width=0.32\textwidth]{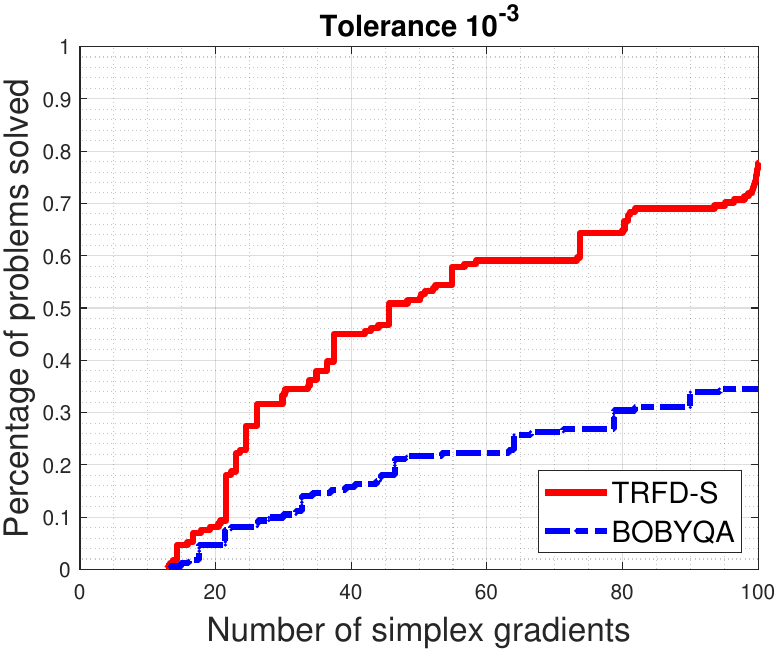}} 
    {\includegraphics[width=0.32\textwidth]{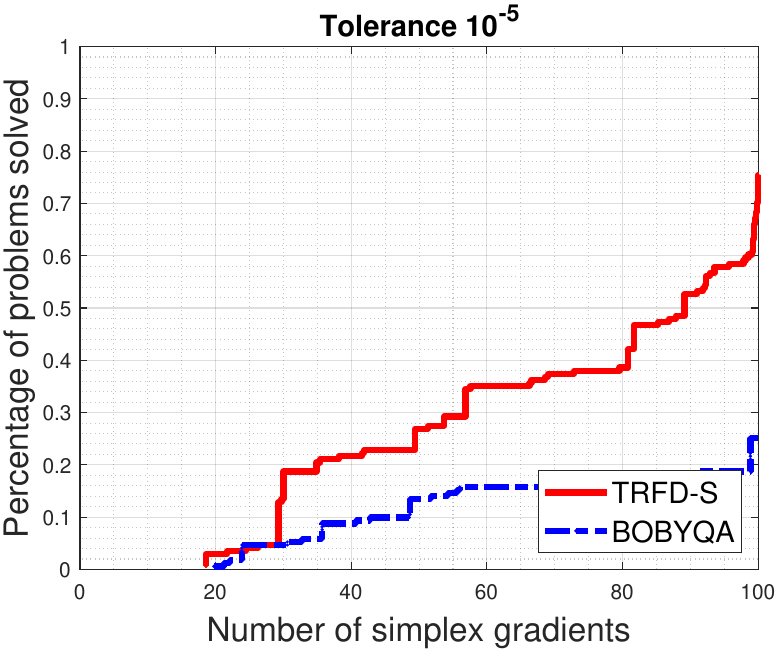}}
    \hspace{5mm}
    {\includegraphics[width=0.32\textwidth]{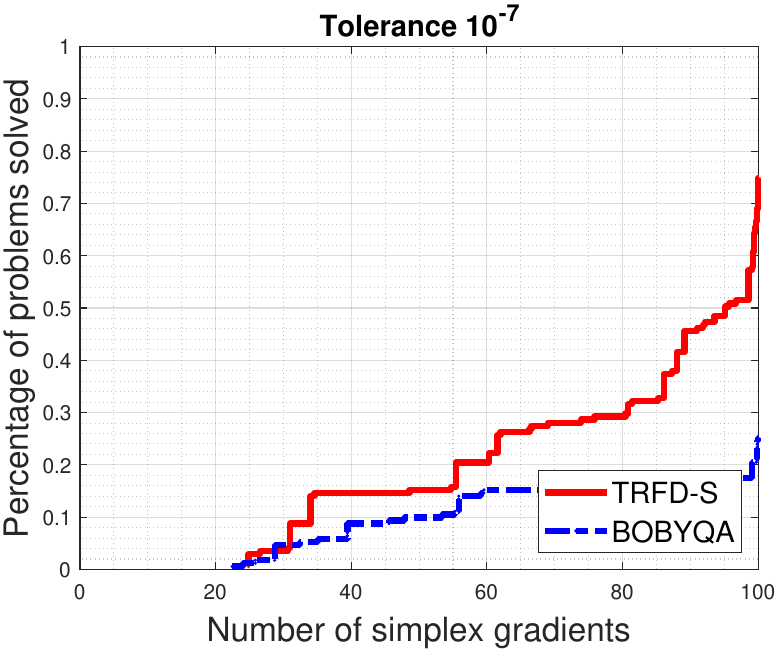}}
    \caption{Data profiles for the calibration of an ODE model.}
    \label{fig:ode}
\end{figure}

\section{Conclusion}\label{sec:5}
This work presented TRFD-S, a derivative-free trust-region method based on finite-difference gradient approximations designed for smooth convexly constrained optimization problems. TRFD-S relies on second-order models, which are assumed to produce at least a Cauchy decrease. Moreover, the method does not require the computation of the approximate stationarity measure $\eta_{\Delta_{\mathrm{max}}}(x_k)$. In this setting, worst-case evaluation complexity bounds were established. Specifically, for nonconvex problems, it was shown that TRFD-S requires at most $\mathcal{O}\left(n\left(\frac{L}{\sigma}\epsilon\right)^{-2}\right)$ function evaluations to reach an $\left(\frac{L}{\sigma}\epsilon\right)$-approximate stationary point of $f\left(\,\cdot\,\right)$ in $\Omega$, while a bound of $\mathcal{O}\left(n\left(\frac{L}{\sigma}\epsilon\right)^{-1}\right)$ was proved for convex problems to find an $\left(\frac{L}{\sigma}\epsilon\right)$-approximate minimizer of $f\left(\,\cdot\,\right)$ in $\Omega$. Also, a bound of $\mathcal{O}\left(n\log\left(\left(\frac{L}{\sigma}\epsilon\right)^{-1}\right)\right)$ was obtained when $\Omega=\mathbb{R}^n$ for Polyak-Lojasiewicz functions. In addition, a simple adaptation of the method was proposed for problems with unrelaxable bound constraints. Numerical results were also presented \textcolor{black}{without and with additive uniform noise}, showing the superior performance of TRFD-S on smooth problems compared with TRFD \cite{grapiglia}, DFQRM \cite{grapiglia2}, and NOMAD \cite{audet}, as well as its competitive behavior relative to NEWUOA \cite{powell,zhang_2023} and BOBYQA \cite{bobyqa,zhang_2023}.

\section*{Acknowledgments}

We are extremely grateful to Zaikun Zhang for his assistance with the Prima package.

\appendix

\section{Sufficient Decrease via the Generalized Cauchy Step}
\label{sec:appendix}

For clarity, we omit the iteration index $k$ in this section.
\\
\\
Let us consider the following definition:
\begin{definition}
    Given $\Omega$, $x\in \Omega$, $0<\Delta \leq \Delta_{\mathrm{max}}$, $g\in \mathbb{R}^n$, $H \in \mathbb{R}^{n\times n}$ and $t>0$, let us denote $d = P_{\Omega}(x-tg)-x$. If the step $d$ satisfies both
    \begin{equation}
        \|d\| \leq \Delta \quad \text{and} \quad m(d) \leq  m(0) + \kappa_u \langle g, d \rangle \label{both}
    \end{equation}
    and at least
        \begin{align}
                \|d\| &\geq \kappa_f\Delta \label{one_1}\\
                \hspace{1mm} &\text{or} \notag\\
                m(d) &\geq m(0) + \kappa_\ell \langle g, d \rangle \label{one_2}\\
                &\text{or} \notag\\
                \|P_{\mathcal{T}(x+d)}(-g)\| &\leq \kappa_e \frac{|\langle g, d \rangle|}{\Delta}, \label{one_3}
        \end{align}
    where
    \begin{equation}\label{kappas}
        0 < \kappa_u < \kappa_\ell < 1, \quad \kappa_f \in (0,1), \quad \kappa_e \in (0, \frac{1}{2}),
    \end{equation}
    then we denote $d^{GC} := d$, and $d^{GC}$ is called the Generalized Cauchy step.
\end{definition}
Moreover, we define
\begin{itemize}
    \item The normal cone of $\Omega$ at $x$: $$\mathcal{N}(x):=\{y \in \mathbb{R}^n: \langle y, z-x \rangle \leq 0, \; \forall z \in \Omega\}.$$
    \item The polar of $\mathcal{N}(x)$: $$\mathcal{N}^0(x):=\{y \in \mathbb{R}^n: \langle y, z \rangle \leq 0, \; \forall z \in \mathcal{N}(x)\}.$$
    \item The tangent cone of $\Omega$ at $x$: $\mathcal{T}(x)=\mathcal{N}^0(x)$.
    \item The Moreau decomposition of $x$ relative to $y$: $$x=P_{\mathcal{T}(y)}(x) + P_{\mathcal{N}(y)}(x).$$
\end{itemize}
\noindent The following lemma provides two properties on the approximate stationarity measure.
\begin{lemma}\label{lem:ab}
    Suppose that A1 holds. Given $r>0$, let $\eta_r$ be defined by \eqref{eta}. Then, \\ \\ (a) $\eta_r(x) \geq 0, \quad \forall x \in \Omega$; \\ (b) $\eta_r(x) \leq \frac{|\langle g,s \rangle|}{r} + 2\|P_{\mathcal{T}(x+s)}(-g)\|, \quad \forall s \in (\Omega-\{x\})\cap B(0,r).$
\end{lemma}
\begin{proof}
    The proof of (b) is an adaptation of Theorem 12.1.5 (iii) in \cite{toint}.
    \vspace{2mm}
    \\
    Clearly, $$\min_{s \in \Omega-\{x\} \atop \|s\|\leq r} \langle g,s \rangle \leq \langle g, s \rangle, \quad \forall s \in (\Omega-\{x\})\cap B[0,r].$$
    In particular,
    \begin{equation}\label{gs}
        \min_{s \in \Omega-\{x\} \atop \|s\|\leq r} \langle g,s \rangle \leq 0.
    \end{equation} By \eqref{gs} and the definition of $\eta_r$ in \eqref{eta}, we conclude that (a) holds. To prove (b), let $s^*\in (\Omega-\{x\})\cap B[0,r]$ be a solution of $$\min_{s \in \Omega-\{x\} \atop \|s\|\leq r} \langle g,s \rangle,$$
    and consider some $s \in (\Omega-\{x\})\cap B[0,r]$.
\noindent On one hand, by applying the Moreau decomposition to $-g$ at $x+s$, we have
    $$-g = P_{\mathcal{T}(x+s)}(-g)+P_{\mathcal{N}(x+s)}(-g).$$
    So, we get
    \begin{align}
        \langle g, s^*-s \rangle &= \langle -P_{\mathcal{T}(x+s)}(-g) - P_{\mathcal{N}(x+s)}(-g), s^* - s \rangle \nonumber\\ &= - \langle P_{\mathcal{T}(x+s)}(-g), s^* - s \rangle - \langle P_{\mathcal{N}(x+s)}(-g), s^* - s \rangle. \label{g_s_star}
    \end{align}
    Moreover, $$s^*-s = (x+s^*) - (x+s) \in \mathcal{N}^0(x+s),$$ since $x+s^* \in \Omega$, by definition of $s^*$. Therefore,
    \begin{equation}\label{s_star}
    \langle s^*-s, z \rangle \leq 0, \quad \forall z \in \mathcal{N}(x+s).    
    \end{equation}
    Since $P_{\mathcal{N}(x+s)}(-g) \in \mathcal{N}(x+s)$, by \eqref{s_star} we have that \eqref{g_s_star} reduces to
    \begin{equation}\label{PT}
        \langle g, s^*-s \rangle \geq - \langle P_{\mathcal{T}(x+s)}(-g), s^* - s \rangle.
    \end{equation}
    On the other hand, by definition of $s^*$, $$\langle g,s^* \rangle \leq \langle g, s \rangle.$$ Thus, 
    \begin{equation}\label{s_hat}
        \langle g,s^*-s \rangle \leq 0, \quad \forall s \in (\Omega-\{x\})\cap B[0,r].
    \end{equation} As a result, by combining \eqref{s_hat} with \eqref{PT}, we have
    \begin{align}
        | \langle g, s^*-s \rangle | &= -  \langle g, s^*-s \rangle \leq \langle P_{\mathcal{T}(x+s)}(-g), s^* - s \rangle \leq |\langle P_{\mathcal{T}(x+s)}(-g), s^* - s \rangle| \\
        &\leq \|P_{\mathcal{T}(x+s)}(-g)\| \|s^*-s\| \notag \leq \|P_{\mathcal{T}(x+s)}(-g)\| \left(\|s^*\|+\|s\|\right) \notag\\
        &\leq \|P_{\mathcal{T}(x+s)}(-g)\| 2r. \label{p2r}
    \end{align}
    Then, by the definition of $\eta_r(x)$ in \eqref{eta}, (a) and \eqref{p2r}, we have
    \begin{align*}
    \eta_r(x) &= \frac{1}{r}(- \langle g,s^*\rangle) = \left|\frac{1}{r}(- \langle g,s^*\rangle)\right| = \frac{1}{r} |\langle g,s^*\rangle| = \frac{1}{r} \left|\langle g, s^* -s+s \rangle\right| \\
    &= \frac{1}{r}|\langle g, s^* -s\rangle + \langle g, s \rangle| \leq \frac{1}{r}\big(|\langle g, s^* -s\rangle| + |\langle g, s \rangle|\big) \\
    &\leq 2\|P_{\mathcal{T}(x+s)}(-g)\| + \frac{|\langle g,s\rangle|}{r},
    \end{align*}
    which shows that (b) is true.
\end{proof}
\noindent Additionally, let us consider the following definition:
\begin{definition}
     Given $x \in \Omega$ and $g\in \mathbb{R}^n$, the projected approximated gradient path is defined as the set $\{p(t,x) := P_{\Omega}(x-tg), \, \forall t \geq 0\}$.
\end{definition}
\noindent The following lemma states that each element of the projected approximated gradient path is the solution of a particular optimization problem.
\begin{lemma}\label{lem:gs}
    Suppose that A1 holds. Given $x\in \Omega$, $g \in \mathbb{R}^n$ and $t\geq 0$, the step $p(t,x)-x$ is a solution of the problem
    \begin{equation}\label{g,s}
        \min_{s \in \Omega-\{x\} \atop \|s\| \leq \|p(t,x)-x\|} \langle g, s \rangle.
    \end{equation}
\end{lemma}
\begin{proof}
    This proof is an adaptation of Theorem 12.1.4 in \cite{toint}.
    \\
    \\
    Let us denote
    \begin{equation}\label{u^*}
        u^*=p(t,x)-x
    \end{equation} as the solution of
    \begin{equation}\label{tg_u}
        \min_{u \in \Omega-\{x\}}\|(x-tg)-(x+u)\|^2 = \min_{u\in\Omega-\{x\}}\|tg+u\|^2.
    \end{equation}
    \textbf{Case I}: $t=0$.
    \vspace{2mm}
    \\
    In this case, we have $p(t,x)=x$. Moreover, $t=0$ implies that the feasible set of \eqref{g,s} is the singleton $\{0\}$. Thus, the step $p(t,x)-x$ is the solution of \eqref{g,s}.
    \\
    \\
    \textbf{Case II}: $t>0$.
    \vspace{2mm}
    \\
    On one hand, let us rewrite \eqref{g,s} as
    \begin{equation}\label{g,s_ref}
        \min_{s \in \Omega-\{x\} \atop \|s\|^2 \leq \|p(t,x)-x\|^2} \langle g, s \rangle.
    \end{equation}
    Then, the KKT conditions of \eqref{g,s_ref} (see Theorem 3.2.11 in \cite{toint}) are given by
    \begin{align}
        \langle g + 2{\lambda} s^*, z-(x+s^*) \rangle &\geq 0, \quad \forall z \in \Omega, \label{kkt_1}\\
        {\lambda} &\geq 0, \label{kkt_2}\\ {\lambda}\left(\|p(t,x)-x\|^2-\|s^*\|^2\right)&=0, \label{kkt_3}
    \end{align}
    where $s^*$ is a solution of \eqref{g,s}. On the other hand, consider the necessary condition for $u^*$ to be a solution of \eqref{tg_u}, i.e.,
    \begin{equation*}
        \langle 2(tg+u^*), z-(x+u^*)\rangle \geq 0, \quad \forall z \in \Omega.
    \end{equation*}
    Dividing by $2t>0$ on both sides, we get 
    \begin{equation*}
       \left\langle g + \frac{u^*}{t}, z-(x+u^*)\right\rangle \geq 0, \quad \forall z \in \Omega.
    \end{equation*}
    Notice that if
    \begin{equation*}
        s^*=u^* \quad \text{and} \quad \lambda = \frac{1}{2t} > 0,
    \end{equation*}
    then \eqref{kkt_1} and \eqref{kkt_2} are satisfied. Moreover, \eqref{kkt_3} is also satisfied, by the definitions of $u^*$ in \eqref{u^*} and of $s^*$. Since \eqref{g,s_ref} is a convex problem, the KKT conditions are sufficient. Thus, we conclude that the step $s^*=p(t,x)-x$ is a solution of \eqref{g,s}.
\end{proof}
\noindent The next lemma gives the relation between two approximate stationarity measures that have a different reference radius.
\begin{lemma}\label{lem:last}
    Suppose that A1 holds. Given $x\in \Omega$, $g\in \mathbb{R}^{n}$ and $0<r_1\leq r_2$, we have $$\eta_{r_1}(x)\geq \eta_{r_2}(x).$$
\end{lemma}
\begin{proof}
    See Lemma 2.12 in \cite{grapiglia} with $p=2$, $F: \mathbb{R}^n \to \mathbb{R}$ and $h(z)=z, \; \forall z \in \mathbb{R}$.
\end{proof}
\noindent Given the quadratic model $m(d)=f(x)+\langle g, d\rangle + \frac{1}{2} \langle Hd, d \rangle$, the following theorem shows that the Generalized Cauchy step $d^{GC}$ gives a sufficient decrease inside this approximation.
\begin{theorem}\label{gen_cauchy}
    Suppose that A1 holds and let $d^{GC}$ be the Generalized Cauchy step. Then,
    \begin{equation}\label{sgc}
        m(0) - m(d^{GC}) \geq \kappa \eta_{\Delta_{\mathrm{max}}}(x) \min\left\{\Delta, \frac{\eta_{\Delta_{\mathrm{max}}}(x)}{\|H\|}\right\},
    \end{equation}
    where $\kappa := \min\left\{2\kappa_u(1-\kappa_\ell), \frac{\kappa_u \kappa_f}{2}\right\}$, and $\eta_{\Delta_{\mathrm{max}}}(x)$ is defined by \eqref{eta} with $r=\Delta_{\mathrm{max}}$.
\end{theorem}
\begin{proof}
    This proof is inspired by Theorem 12.2.2 in \cite{toint}.
    \vspace{0.2cm}
    \\
    \noindent By the second inequality in \eqref{both}, Lemma \ref{lem:gs} and \eqref{eta}, we have
    \begin{align}
        m(0)-m(d^{GC}) &\geq - \kappa_u \langle g, d^{GC} \rangle = \kappa_u \|d^{GC}\|\left(-\frac{1}{\|d^{GC}\|} \langle g, d^{GC} \rangle \right) = \kappa_u \|d^{GC}\|\eta_{\|d^{GC}\|}(x).\label{kappa_eta}
    \end{align}
    Now, let us consider the three following cases.
    \vspace{2mm}
    \\
\textbf{Case I}: $\|d^{GC}\| \geq \kappa_f\Delta$.
\vspace{2mm}
\\
By \eqref{kappa_eta}, \eqref{one_1} and Lemma \ref{lem:last}, it follows that
\begin{align}
    m(0) - m(d^{GC}) &\geq \kappa_u \kappa_f\Delta\eta_{\|d^{GC}\|}(x) \geq \kappa_u \kappa_f\Delta\eta_{\Delta}(x) \geq \kappa_u \kappa_f\Delta\eta_{\Delta_{\mathrm{max}}}(x).\label{caseI}
\end{align}
\textbf{Case II}: $m(d^{GC}) \geq m(0) + \kappa_\ell \langle g, d^{GC} \rangle$.
\vspace{2mm}
\\
From the model definition, we have
\begin{equation}\label{sHs}
    \langle Hd, d^{GC} \rangle = 2 \left(m(d^{GC})-m(0)-\langle g, d^{GC} \rangle\right).
\end{equation}
Then, by \eqref{sHs}, \eqref{one_2}, Lemma \ref{lem:gs} and \eqref{eta}, it follows that
\begin{align*}
    \|H\| &= \frac{\|H\|\|d^{GC}\|^2}{\|d^{GC}\|^2} \geq \frac{\langle Hd, d^{GC} \rangle}{\|d^{GC}\|^2} \geq \frac{2 (m(d^{GC})-m(0)-\langle g, d^{GC} \rangle)}{\|d^{GC}\|^2} \geq \frac{2 (\kappa_\ell \langle g,d^{GC} \rangle - \langle g,d^{GC} \rangle)}{\|d^{GC}\|^2} \\
    &= \frac{2(\kappa_\ell-1)\langle g,d^{GC} \rangle}{\|d^{GC}\|^2} = \frac{2(1-\kappa_\ell)}{\|d^{GC}\|}\left(-\frac{1}{\|d^{GC}\|}\langle g,d^{GC} \rangle\right) = \frac{2(1-\kappa_\ell)}{\|d^{GC}\|}\eta_{\|d^{GC}\|}(x),
\end{align*}
which is equivalent to
\begin{equation*}
    \|d^{GC}\| \geq 2(1-\kappa_{\ell})\frac{\eta_{\|d^{GC}\|}(x)}{\|H\|}.
\end{equation*}
Multiplying by $\eta_{\|d^{GC}\|}(x)$ on both sides, we obtain
\begin{equation}\label{equiv_kappa_s}
    \|d^{GC}\|\eta_{\|d^{GC}\|}(x) \geq 2(1-\kappa_\ell)\frac{\eta_{\|d^{GC}\|}(x)^2}{\|H\|}.
\end{equation}
Using \eqref{kappa_eta}, \eqref{equiv_kappa_s} and Lemma \ref{lem:last}, we get
\begin{equation}
    m(0) - m(d^{GC}) \geq \kappa_u2(1-\kappa_\ell)\frac{\eta_{\|d^{GC}\|}(x)^2}{\|H\|} \geq 2\kappa_u(1-\kappa_\ell)\frac{\eta_{\Delta_{\mathrm{max}}}(x)^2}{\|H\|}.\label{caseII}
\end{equation}
\textbf{Case III}: $\|P_{\mathcal{T}(x+d^{GC})}(-g)\| \leq \kappa_e \frac{|\langle g, d^{GC} \rangle|}{\Delta}$ and $\|d^{GC}\| < \kappa_f \Delta$.
\vspace{2mm}
\\
From (b) in Lemma \ref{lem:ab}, we have
\begin{equation*}
    \eta_{\kappa_f\Delta}(x) \leq \frac{|\langle g,d^{GC} \rangle|}{\kappa_f\Delta} + 2\|P_{\mathcal{T}(x+d^{GC})}(-g)\|, \quad \forall d^{GC} \in (\Omega-\{x\})\cap B(0,\kappa_f\Delta).
\end{equation*}
Then, by \eqref{one_3}, the definitions of $\kappa_e,\kappa_f$ in \eqref{kappas}, Lemma \ref{lem:gs}, \eqref{eta} and (a) in Lemma \ref{lem:ab}, it follows that
\begin{align*}
    \eta_{\kappa_f\Delta}(x) &\leq \frac{|\langle g,d^{GC} \rangle|}{\kappa_f\Delta} + \frac{2\kappa_e|\langle g,d^{GC} \rangle|}{\Delta} \leq \frac{2|\langle g,d^{GC} \rangle|}{\kappa_f\Delta} = \frac{2\|d^{GC}\|}{\kappa_f\Delta}\left|\frac{1}{\|d^{GC}\|}\langle g,d^{GC} \rangle\right| \\
    &= \frac{2\|d^{GC}\|}{\kappa_f\Delta}\left|-\eta_{\|d^{GC}\|}(x)\right| = \frac{2\|d^{GC}\|}{\kappa_f\Delta}\eta_{\|d^{GC}\|}(x),
\end{align*}
which is equivalent to
\begin{equation}\label{equiv_new}
    \|d^{GC}\|\eta_{\|d^{GC}\|}(x) \geq \frac{\kappa_f}{2}\Delta\eta_{\kappa_f\Delta}(x).
\end{equation}
By \eqref{kappa_eta}, \eqref{equiv_new} and Lemma \ref{lem:last}, it follows that
\begin{equation}
    m(0) - m(d^{GC}) \geq \frac{\kappa_u \kappa_f}{2} \Delta \eta_{\kappa_f\Delta}(x) \geq \frac{\kappa_u \kappa_f}{2} \Delta \eta_{\Delta_{\mathrm{max}}}(x).\label{caseIII}
\end{equation}
Finally, by combining \eqref{caseI}, \eqref{caseII}, \eqref{caseIII} and the definition of $\kappa$, we obtain
\begin{align*}
    m(0) - m(d^{GC}) &\geq \min\left\{\kappa_u \kappa_f\Delta\eta_{\Delta_{\mathrm{max}}}(x), 2\kappa_u(1-\kappa_\ell)\frac{\eta_{\Delta_{\mathrm{max}}}(x)^2}{\|H\|}, \frac{\kappa_u \kappa_f}{2} \Delta \eta_{\Delta_{\mathrm{max}}}(x)\right\} \\
    &= \min\left\{2\kappa_u(1-\kappa_\ell)\frac{\eta_{\Delta_{\mathrm{max}}}(x)^2}{\|H\|}, \frac{\kappa_u \kappa_f}{2} \Delta \eta_{\Delta_{\mathrm{max}}}(x)\right\} \\
    &\geq \kappa \min\left\{\frac{\eta_{\Delta_{\mathrm{max}}}(x)^2}{\|H\|}, \Delta \eta_{\Delta_{\mathrm{max}}}(x)\right\} \\
    &= \kappa \eta_{\Delta_{\mathrm{max}}}(x) \min\left\{\frac{\eta_{\Delta_{\mathrm{max}}}(x)}{\|H\|}, \Delta\right\},
\end{align*}
which implies that \eqref{sgc} is true.
\end{proof}

\bibliographystyle{siamplain}

\begin{thebibliography}{40}

\bibitem{alarie} S. Alarie, C. Audet, A. E. Gheribi, M. Kokkolaras, and S. Le Digabel, \textit{Two decades of blackbox optimization applications}, EURO J. Comput. Optim., 9 (2021), p. 100011.

\bibitem{adachi} S. Adachi, S. Iwata, Y. Nakatsukasa, and A. Takeda. \textit{Solving the trust-region subproblem by a generalized eigenvalue problem}. SIAM J. Optim., 27 (2017), pp. 269--291.

\bibitem{pumpkin} S.G. Agrawal, R.N. Methekar, \textit{Mathematical model for heat and mass transfer during convective drying of pumpkin}. \textcolor{black}{Food Bioprod. Process.}, 101 (2017), pp. 68--73.

\bibitem{dengue} Q.O. Ahman, R.O. Aja, D. Omale, P.A. Okpara, \textit{Mathematical modeling of dengue virus transmission: exploring vector, vertical, and sexual pathways with sensitivity and bifurcation analysis}. \textcolor{black}{BMC Infect. Dis.}, 25 (2025).

\bibitem{AH} C. Audet and W. Hare, \textit{Derivative-Free and Blackbox Optimization}, Springer, 2017.

\bibitem{audet} C. Audet, S. Le Digabel, V. Rochon Montplaisir, and C. Tribes, \textit{NOMAD version 4: Nonlinear optimization with the MADS algorithm}, \textcolor{black}{ACM Trans. Math. Softw.}, 48 (2021), pp. \textcolor{black}{1--22}.

\bibitem{covid1} A. Awasthi, \textit{Mathematical model for transmission dynamics of COVID-19 infection}, \textcolor{black}{Eur. Phys. J. Plus}, 138 (2023).

\bibitem{covid2} M.H.N. Aziz, A.D.A. Safaruddin, N.A. Hamzah, S.S. Supadi, Z. Yuhao, M.A. Aziz, \textit{Modelling the effect of vaccination program and inter-state travel in the spread of COVID-19 in Malaysia}. \textcolor{black}{Acta Biotheor.}, (2023).

\bibitem{fista} A. Beck and M. Teboulle, \textit{A fast iterative shrinkage-thresholding algorithm for linear inverse problems}. SIAM J. Sci. Comput., \textcolor{black}{31} (2009), pp. 183--202.

\bibitem{dykstra_2} J. P. Boyle and R. L. Dykstra, \textit{A method for finding projections onto the intersection of convex sets in Hilbert spaces}. \textcolor{black}{Adv. Order Restrict. Stat. Inference} (1986), pp. 28--47.

\bibitem{CS} A. Chaudhry and K. Scheinberg, \textit{On Complexity of Model-Based Derivative-Free Methods}, arXiv:2510.14935.

\bibitem{karas} P. D. Conejo, E. W. Karas, L. G. Pedroso, A. A. Ribeiro, and M. Sachine. \textit{Global convergence of trust-region algorithms for convex constrained minimization without derivatives}. \textcolor{black}{Appl. Math. Comput.}, 220 (2013), pp. 324--330.

\bibitem{toint} A. R. Conn, N. I. Gould, and P. L. Toint, \textit{Trust region methods}, SIAM, 2000.

\bibitem{CSV2} A. R. Conn, K. Scheinberg, and L. N. Vicente, \textit{Introduction to derivative-free optimization}, SIAM, 2009.

\bibitem{dw} S. L. Digabel, and S. M. Wild, \textit{A taxonomy of constraints in simulation-based optimization}, arXiv:1505.07881, (2015).

\bibitem{carbon} P. Donald, M. Mayengo, A.G. Lambura, \textit{Mathematical modeling of vehicle carbon dioxide emissions}. Heliyon, 10 (2024).

\bibitem{dykstra_1} R. L. Dykstra, \textit{An algorithm for restricted least squares regression}. \textcolor{black}{J. Amer. Statist. Assoc.}, 78 (1983), pp. 837--842.

\bibitem{grapiglia} D. Davar and G. N. Grapiglia, \textit{TRFD: A derivative-free trust-region method based on finite differences for composite nonsmooth optimization}, SIAM J. Optim., 35 (2025), pp. 1792--1821.

\bibitem{elgeti}
S. Elgeti, M. Probst, C. Windeck, M. Behr, W. Michaeli, and C. Hopmann.
\textit{Numerical shape optimization as an approach to extrusion die design}.
Finite Elements in Analysis and Design, 61 (2012), pp. 35--43.

\bibitem{garmanjani2016trust} R. Garmanjani, D. Júdice, and L. N. Vicente, \textit{Trust-region methods without using derivatives: worst case complexity and the nonsmooth case}, SIAM J. Optim., 26 (2016), pp. 1987--2011.

\bibitem{grapiglia1} G. N. Grapiglia, \textit{Quadratic regularization methods with finite-difference gradient approximations}, Comput. Optim. Appl., 85 (2023), pp. 683--703.

\bibitem{grapiglia2} G. N. Grapiglia, \textit{Worst-case evaluation complexity of a derivative-free quadratic regularization method}, Optim. Lett., 18 (2024), pp. 195--213.

\bibitem{gratton_opm} S. Gratton and P. L. Toint, \textit{OPM, a collection of optimization problems in Matlab}, \textcolor{black}{arXiv:2112.05636}, (2021).

\bibitem{hough} M. Hough and L. Roberts, \textit{Model-based derivative-free methods for convex-constrained optimization}. SIAM J. Optim., 32 (2022), pp. 2552--2579.

\bibitem{LMW} J. Larson, M. Menickelly, and S. M. Wild, \textit{Derivative-free optimization methods}, Acta Numer., 28 (2019), pp. 287--404.

\bibitem{zika} T.Y. Miyaoka, S. Lenhart, J.F.C.A. Meyer, \textit{Optimal control of vaccination in a vector-borne reaction-diffusion model applied to Zika virus}. \textcolor{black}{J. Math. Biol.}, 79 (2019), pp. 1077--1104.

\bibitem{bilevel} K. Mombaur, A. Truong, J-P. Laumand, \textit{From human to humanoid locomotion - an inverse optimal control approach.} Autonomous Robots 2 (2010), pp. 369--383

\bibitem{more2009benchmarking} J. J. Moré and S. M. Wild, \textit{Benchmarking derivative-free optimization algorithms}, SIAM J. Optim., 20 (2009), pp. 172--191.

\bibitem{NW} J. Nocedal and S. Wright, Numerical Optimization, Springer, 2006.

\bibitem{polyak} B.T. Polyak, \textit{Gradient methods for the minimization of functionals}. \textcolor{black}{U.S.S.R. Comput. Math. Math. Phys.}, 3 (1963), pp. 864--879.

\bibitem{bobyqa} M. J. Powell, \textit{The BOBYQA algorithm for bound constrained optimization without derivatives}, \textcolor{black}{Cambridge NA Rep. NA2009/06}, 26 (2009), pp. 26--46.

\bibitem{powell} M. J. Powell, \textit{The NEWUOA software for unconstrained optimization without derivatives}, \textcolor{black}{Large-Scale Nonlinear Optim.}, (2006), pp. 255--297.

\bibitem{quarteroni} A. Quarteroni, R. Sacco, and F. Saleri, \textit{Numerical mathematics}, (Vol. 37). Springer Science \& Business Media, 2006.

\bibitem{rosenzweig} M. L. Rosenzweig, \textit{Paradox of enrichment: destabilization of exploitation ecosystems in ecological time}. Science, 171 (1971), pp. 385--387.

{\color{black}\bibitem{shi_nocedal} H. J. M. Shi, M. Qiming Xuan, F. Oztoprak, and J. Nocedal, \textit{On the numerical performance of finite-difference-based methods for derivative-free optimization}, 38(2) (2023), Optimization Methods and Software, pp. 289--311.}

\bibitem{turchin} P. Turchin, \textit{Complex Population Dynamics}, \textcolor{black}{Princeton Univ. Press}, 2003.

\bibitem{adversarial} G. Ughi, V. Abrol, J. Tanner, \textit{An empirical study of derivative-free optimization algorithms for targeted black-box attacks in deep neural networks.} Optimization and Enginerring 23 (2022), pp. 1319--1346.

\bibitem{china} Y. Wang, H. Wang, S. Chang, and A. Avram, \textit{Prediction of daily $PM_{2.5}$ concentration in China using partial differential equations}. PLoS ONE, 13 (2018).

\bibitem{zhang_2023} Z. Zhang, \textit{PRIMA: Reference Implementation for Powell's Methods with Modernization and Amelioration}, DOI: 10.5281/zenodo.8052654, 2023.
\end{thebibliography}

\end{document}